\documentclass[11pt]{amsart}
\usepackage[utf8]{inputenc}

\setlength{\textwidth}{165mm}

\setlength{\textheight}{215mm}
\setlength{\parindent}{8mm}
\setlength{\oddsidemargin}{0pt}
\setlength{\evensidemargin}{0pt}
\setlength{\topmargin}{0pt}

\usepackage{amsmath, amssymb}
\usepackage{amsthm}
\usepackage[pagebackref, colorlinks, linkcolor=red, citecolor=blue, urlcolor=blue, hypertexnames=true]{hyperref}
\usepackage{amsrefs}
\usepackage{setspace,nicefrac, yhmath, amscd,eucal}
\usepackage{color, ulem}

\newtheorem{thm}{Theorem}[section]
\newtheorem{cor}[thm]{Corollary}
\newtheorem{lem}[thm]{Lemma}
\newtheorem{question}[thm]{Question}
\newtheorem{definition}[thm]{Definition}

\newtheorem{remark}[thm]{Remark}
\newtheorem{proposition}[thm]{Proposition}

\theoremstyle{definition}


\begin{document}

\title[Maximal vN subalgebras from maximal subgroups]{Maximal von Neumann subalgebras arising from maximal subgroups}
\date{}
\author{Yongle Jiang}
\address{Yongle Jiang, School of Mathematical Sciences, Dalian University of Technology, Dalian, 116024, China}
\email{yonglejiang@dlut.edu.cn, jiangyongle@amss.ac.cn}

\subjclass[2010]{Primary 46L10, Secondary 20B22}

\keywords{maximal von Neumann subalgebras, maximal subfactors, maximal subgroups, highly transitive actions, rigid subalgebras}

\maketitle
\begin{abstract}
Ge asked the question whether $LF_{\infty}$ can be embedded into $LF_2$ as a maximal subfactor. We answer it affirmatively by three different approaches, all containing the same key ingredient: the existence of maximal subgroups with infinite index. We also show that point stabilizer subgroups for every faithful, 4-transitive action on an infinite set give rise to maximal von Neumann subalgebras. By combining this with known results on constructing faithful, highly transitive actions, we get many maximal von Neumann subalgebras arising from maximal subgroups with infinite index. 
\end{abstract}

\section{Introduction} 
Let $G$ be any countably infinite discrete group. Denote by $LG$ the group von Neumann algebra of $G$. The starting point for this paper is Question 2 in Ge's problem list \cite{ge} published in 2003 (Note that a maximal subfactor in this paper should be understood as a subfactor which is proper and maximal among all proper von Neumann subalgebras): 
\begin{question}[Ge]\label{question: ge's}
A subfactor (or subalgebra) is called \textbf{maximal} if it is not contained in any proper subalgebra other than itself. Can a non-hyperfinite factor of type II$_1$ have a hyperfinite subfactor as
its maximal subfactor? Can a maximal subfactor of the hyperfinite factor of type II$_1$ have an
infinite Jones index? 
Can $LF_{\infty}$ be embedded into $LF_2$ as a maximal subfactor?
\end{question}

In general, it is not clear whether maximal subalgebras exist for a given von Neumann algebra. But if we restrict our attention to the family of all subalgebras satisfying a certain good property, a maximal element (w.r.t. the partial order defined by inclusion) for this family may exist by Kuratowski-Zorn's lemma. Two such properties are commutativity and amenability.
In fact, maximal abelian von Neumann subalgebras have been studied extensively starting from the very beginning. We refer the readers to the book \cite{masa_book} for the history and classical results on this topic. The first concrete example of abelian maximal amenable von Neumann subalgebra is due to Popa \cite{popa_max injective}. He proved that the abelian subalgebra of the free group factor generated by one of the generators in the free group is maximal amenable, answering a famous question of Kadison \cite{ge}. Later on, people found more concrete examples of maximal amenable von Neumann subalgebras by (modifying) Popa's method, see e.g. \cite{ge_max, shen, cfrw, houdayer, bc_ann}. Quite recently, Boutonnet and Carderi \cite{bc} found an entirely new method to study the question of whether maximal amenable subgroups give rise to maximal amenable von Neumann subalgebras.

By contrast, more than a decade has passed after Question \ref{question: ge's} was posed, it seems neither it nor the topic of maximal subfactors with infinite index has received much attention. On the other hand, recently, Suzuki studied the ``dual" notion of maximal C$^*$-subalgebras, i.e. minimal ambient C$^*$-algebras and obtained many striking results \cite{yuhei, yuhei_19}.


Partially motivated by this situation, together with Skalski, we started exploring the notion of maximal P von Neumann subalgebras in \cite{js}, where P stands for some property even weaker than amenability, e.g. Haagerup property or non-(T). As a by-product of our investigation, we were able to answer the first two parts of Question \ref{question: ge's} affirmatively.  
Inspired by this solution and crucial ingredients used there, we can also settle the last part affirmatively. 

\begin{thm}\label{thm: solve ge's Q}
$LF_{\infty}$ can be embedded as a maximal subfactor into $LF_2$ with infinite Jones index.
\end{thm}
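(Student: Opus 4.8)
The plan is to realize the desired copy of $LF_\infty$ as $LH$ for a carefully chosen maximal subgroup $H<F_2$ of infinite index, and then to promote the group-theoretic maximality of $H$ to maximality of $LH$ as a von Neumann subalgebra of $LF_2$. Fix a faithful, highly transitive action $F_2\curvearrowright\mathbb{N}$; such actions exist, since by a Baire category argument the set of pairs in $\mathrm{Sym}(\mathbb{N})^2$ generating a faithful (in fact free) highly transitive action is comeager. Put $H=\mathrm{Stab}_{F_2}(1)$. The orbit is infinite, so $[F_2:H]=\infty$, and $2$-transitivity makes the action primitive, so $H$ is a \textbf{maximal} subgroup of $F_2$.

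Next, $LH\cong LF_\infty$. By Nielsen--Schreier, $H$ is free, and as a subgroup of the countable group $F_2$ it is free of countable rank. It cannot be finitely generated: a finitely generated subgroup of a free group is closed in the profinite topology (Marshall Hall), so since $H\neq F_2$ there would be a proper finite-index $K$ with $H\le K\lneq F_2$, and Marshall Hall again gives $K=H*L$ as a free product with $L\neq 1$ (as $[K:H]=\infty$), whence $H\subsetneq H*L=K\subsetneq F_2$, contradicting maximality. Therefore $H$ is free of countably infinite rank, $LH\cong LF_\infty$, both $LH$ and $LF_2$ are $\mathrm{II}_1$ factors, and the Jones index of $LH\subseteq LF_2$ equals $[F_2:H]=\infty$.

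It remains to show $LH$ is \textbf{maximal} among all von Neumann subalgebras of $LF_2$ (being a factor, it is then a fortiori a maximal subfactor). Let $LH\subseteq N\subseteq LF_2$ with $N\neq LH$. Using the trace-preserving conditional expectation $E_{LH}\colon LF_2\to LH$, choose $0\neq a\in N$ with $E_{LH}(a)=0$, so $a=\sum_{g\notin H}a_g u_g$. The crux is to show that $u_g\in N$ for some $g\notin H$. Given that, $N$ contains the von Neumann algebra generated by $LH$ and $u_g$, which contains $L\langle H,g\rangle=LF_2$ since $H$ is a maximal subgroup; hence $N=LF_2$, as required.

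To extract such a $u_g$ one exploits the transitivity of the action. Because $F_2\curvearrowright\mathbb{N}$ is $2$-transitive, $H$ acts transitively on $\mathbb{N}\setminus\{1\}$, so $F_2$ has exactly the two double cosets $H$ and $HgH$ for any $g\notin H$; in particular $[H:H\cap gHg^{-1}]=|HgH/H|=\infty$, and higher transitivity yields correspondingly strong ``mixing'' of $H$ inside $F_2$. One then averages $a$ over $H$ (conjugating by $u_h$, $h\in H$, and passing to suitable weak limits) and feeds the mixing of the inclusion $LH\subseteq LF_2$ into Popa's intertwining-by-bimodules machinery, in the relative and possibly infinite-index form: the mixing prevents the averaged elements from escaping to infinity diffusely and forces a nonzero group element $u_g$, $g\notin H$, into $N$. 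This averaging/intertwining step, together with the attendant control of the Fourier coefficients $a_g$, is where the precise transitivity hypothesis is used and is the main obstacle; the remainder of the argument is soft. (One can also run variants of this last step using Popa-type maximal-injectivity arguments adapted to the non-amenable subalgebra $LH$, or rigidity of particular explicit maximal subgroups of $F_2$, which is what gives the alternative approaches.)
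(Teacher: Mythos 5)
Your overall strategy (take a faithful, highly transitive action of $F_2$, let $H$ be a point stabilizer, note $H$ is maximal of infinite index and $H\cong F_\infty$, then upgrade to maximality of $LH$ in $LF_2$) parallels the route the paper takes via Theorem \ref{thm: faithful 4-transitive actions} and Corollary \ref{cor: acylindricially hyperbolic and t.full groups}, and your soft preliminaries (existence of such actions, Hall's theorem forcing $H$ to be non-finitely generated hence $\cong F_\infty$, index $=[F_2:H]=\infty$) are fine. However, the step that carries the entire analytic content --- showing that an intermediate $LH\subseteq N\subseteq LF_2$ with $N\neq LH$ must contain some $u_g$, $g\notin H$ --- is not proved; it is only gestured at via ``averaging over $H$ plus Popa's intertwining-by-bimodules, using mixing of the inclusion,'' and this gesture does not work as stated. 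The inclusion $LH\subset LF_2$ is \emph{not} mixing: mixing for a group inclusion requires $gHg^{-1}\cap H$ to be finite for $g\notin H$, whereas here $g^{-1}Hg\cap H=\mathrm{Stab}(x)\cap\mathrm{Stab}(g^{-1}x)$ is a two-point stabilizer, which by $3$-transitivity acts transitively on the remaining infinitely many points and is in particular infinite. Moreover, intertwining-by-bimodules would at best produce a corner intertwining of $N$ into $LH$ or similar; it does not by itself force a group unitary $u_g$ into $N$, so the ``main obstacle'' you flag is a genuine gap, not a routine completion.

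What actually closes this gap in the paper is elementary and uses $4$-transitivity pointwise: for $g\notin H$ one shows that the relative commutant $L(g^{-1}Hg\cap H)'\cap LF_2$ is either $\mathbb{C}1$ or, if the transposition $\sigma$ swapping $x$ and $g^{-1}x$ lies in the group, $\mathbb{C}1+\mathbb{C}u_\sigma$; the point is that any $\tau\neq id$ (and $\tau\neq\sigma$) has an infinite conjugation orbit under $g^{-1}Hg\cap H$, which is checked by a case analysis moving a point where $\tau$ acts nontrivially to infinitely many places while fixing $x,g^{-1}x$ (this is exactly where $4$-transitivity, not just $2$- or $3$-transitivity, is needed). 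Since $u_g^*E_N(u_g)$ lies in this relative commutant, one gets $E_N(u_g)\in\mathbb{C}u_g$ (the case $\sigma\in G$ needs an extra short trace argument, using $G=H\sqcup H\sigma H$, to rule out a $u_\sigma$-component), and then $J=\{g: u_g\in N\}$ is a subgroup containing $H$ with $N=LJ$, so maximality of $H$ gives $N=LF_2$. If you prefer to avoid this dynamical argument altogether, the paper's other proofs take a different tack: pull back a maximal subgroup of infinite index from a non-free quotient $\Gamma$ of $F_2$, so that $H\supseteq\ker(F_2\twoheadrightarrow\Gamma)$ is relative I.C.C.\ and normalized by all of $F_2$, which makes the computation $u_g^*E_N(u_g)\in L(\ker)'\cap LF_2=\mathbb{C}1$ immediate (Galois-correspondence style), or use Dykema's free product decomposition together with Choda's Galois correspondence for outer actions. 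Either way, some concrete replacement for your ``mixing $+$ intertwining'' step must be supplied before the proof is complete.
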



In fact, we will present three proofs and all of them benefit from the same viewpoint: the existence of maximal subgroups with infinite index.

Recall that for any group $G$, a (proper) subgroup $H$ is called a \textbf{maximal subgroup} if any subgroup of $G$ containing $H$ is either $H$ or $G$. The study of  maximal subgroups with infinite index in countably infinite groups was initiated by Margulis and So\u\i fer \cite{ms_1, ms_2, ms_3} in 70's. They proved such subgroups exist in $SL(n, \mathbb{Z})$ for all $n>2$, answering a question of Platonov affirmatively. Central to their proof is the study of dynamical properties for the boundary action of $SL(n, \mathbb{Z})$. Since then, using this boundary technique, people have shown that maximal subgroups of infinite index exist in many other groups, see \cite{ggs} for a recent survey on this topic.

Generally speaking, it is highly nontrivial to study specific properties of a maximal subgroup with infinite index in the above works. This is simply because most known proofs for the existence of such subgroups (at least in linear groups) are non-constructive and rely on Kuratowski-Zorn's lemma.
Nevertheless, it is a fairly well-known fact in group theory that every maximal subgroup with infinite index in a finite rank non-abelian free group is isomorphic to $F_{\infty}$ (see Lemma \ref{lem: maximal subgroups with infinite index in free groups are not finitely generated}). Therefore, to answer Ge's question affirmatively, it suffices to
find a maximal subgroup $H$ with infinite index in $F_2$ such that $LH$ is maximal inside $LF_2$.
This is our initial idea to attack this question.

Next, we briefly discuss the three approaches and other results we get.

In the first approach, to find good maximal subgroups with infinite index in $F_2$, we observe that  for a surjective group homomorphism, the preimage (under this homomorphism) of a maximal subgroup with infinite index in the quotient group is again a maximal subgroup with infinite index in the ambient group. As $F_2$ has abundant quotient groups (i.e. any group with no more than two generators), we have much flexibility to choose good quotient groups with nice maximal subgroups. At the von Neumann algebras level, we decompose the ambient free group factor as a twisted crossed product using the Connes-Jones cocycle and apply a Galois correspondence theorem for such a twisted crossed product. 

In the second approach, we prove directly a Galois correspondence theorem for certain inclusions $LH<LG$ by extracting the key ingredients used in the first approach. This immediately leads to the following more general theorem by applying the well-known work of Olshanskii \cite{ols}.

\begin{thm}\label{thm: i.c.c. hyperbolic groups}
Let $G$ be any torsion free, non-elementary hyperbolic group (e.g. the free group $F_2$). Then there exists a maximal subgroup $H$ with infinite index such that $LH$ is also a maximal von Neumann subalgebra inside $LG$.
\end{thm}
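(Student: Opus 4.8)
The plan is to deduce Theorem \ref{thm: i.c.c. hyperbolic groups} from two ingredients: a Galois-type correspondence for group von Neumann subalgebras, and Olshanskii's work producing highly transitive actions of hyperbolic groups. Note first that a countable torsion-free non-elementary hyperbolic group $G$ is i.c.c.\ (centralizers of non-trivial elements are infinite cyclic, so every non-trivial conjugacy class is infinite), hence $LG$ is a $\mathrm{II}_1$ factor. By the work of Olshanskii \cite{ols}, such a $G$ admits a faithful action on a countably infinite set $X$ which is highly transitive, in particular $4$-transitive. Fix $x_0\in X$ and set $H:=\mathrm{Stab}_G(x_0)$. Since $X$ is infinite, $[G:H]=|X|=\infty$ and $H\neq G$; since a $2$-transitive action is primitive, $H$ is a maximal subgroup of $G$; and $LH\subsetneq LG$ because $u_g\perp LH$ for every $g\in G\setminus H$.

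Next I would record the almost-malnormality coming from transitivity: for $g\notin H$, writing $x_1:=gx_0\neq x_0$ we have $gHg^{-1}=\mathrm{Stab}_G(x_1)$, so $H\cap gHg^{-1}$ is the pointwise stabilizer of $\{x_0,x_1\}$, and $[H:H\cap gHg^{-1}]$ equals the length of the $H$-orbit of $x_1$, which is $|X\setminus\{x_0\}|=\infty$ by $2$-transitivity (and similarly pointwise stabilizers of $k$-tuples have infinite index in those of $(k-1)$-tuples for $k\le 4$). Consequently the $LH$--$LH$-bimodule $L^2(LG)\ominus L^2(LH)$ splits, over non-trivial double cosets $HgH$, into induced bimodules of the form $L^2(LH)\otimes_{L(H\cap gHg^{-1})}L^2(LH)$ with the "diagonal" algebra of infinite index, hence into mixing $LH$-bimodules. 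This is the hypothesis under which I would establish the Galois correspondence that is the engine of the ``second approach'': if $H\le G$ is such that $G\curvearrowright G/H$ is faithful and $4$-transitive, then every von Neumann subalgebra $M$ with $LH\subseteq M\subseteq LG$ equals $LK$ for a (necessarily unique) intermediate subgroup $H\le K\le G$.

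Granting this correspondence, the theorem is immediate: since $H$ is maximal, the only intermediate subgroups are $H$ and $G$, hence the only von Neumann subalgebras between $LH$ and $LG$ are $LH$ and $LG$; as $LH\subsetneq LG$, the subalgebra $LH$ is maximal in $LG$. In the case $G=F_2$ this recovers Theorem \ref{thm: solve ge's Q}, using Lemma \ref{lem: maximal subgroups with infinite index in free groups are not finitely generated} to identify $H\cong F_\infty$.

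The substance, and the main obstacle, is the Galois correspondence itself, which must be proven in the delicate regime of infinite index and non-amenable $H$, where averaging over $H$ is unavailable. Here is how I would attack it. Given intermediate $M$, put $K:=\{g\in G:u_g\in M\}$, a subgroup with $H\le K$ and $LK\subseteq M$; it suffices to show $E_M(u_g)=0$ for every $g\notin K$, since then $M\subseteq LK$. One checks that $E_M(u_g)\perp LK$ for such $g$ (its Fourier coefficient at $k\in K$ is $\langle u_g,E_M(u_k)\rangle=\langle u_g,u_k\rangle=0$), and that $\|E_M(u_g)\|_2=1$ would force $u_g=E_M(u_g)\in M$; so for $g\notin K$ the vector $\xi_g:=E_M(u_g)$ lies in the mixing bimodule $L^2(LG)\ominus L^2(LH)$ with $\|\xi_g\|_2<1$. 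The crux is then to combine $u_h\xi_g u_h^{*}=E_M(u_{hgh^{-1}})$ for $h\in H$ with mixingness and the $4$-transitivity ``buffer'' — which allows one to conjugate the double-coset support of $\xi_g$ off any prescribed finite family of double cosets — to force $\|\xi_g\|_2=0$. Making this final step rigorous (tracking how the double-coset support of $\xi_g$ moves under conjugation and genuinely exploiting the absence of almost-invariant vectors in mixing bimodules) is where the real work lies; the remainder is bookkeeping with the Fourier expansion and the conditional expectation $E_M$.
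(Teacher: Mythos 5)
Your reduction (stabilizers of faithful highly transitive actions are maximal of infinite index, so maximality of $LH$ would follow from a Galois-type correspondence for such stabilizers) is sound in outline, but it is not the paper's route to Theorem \ref{thm: i.c.c. hyperbolic groups}, and, more importantly, it leaves the central step unproved while the mechanism you sketch for it is incorrect. The paper proves this theorem via Theorem \ref{thm: general version of i.c.c. hyperbolic groups} and Lemma \ref{lem: 2nd approach}: by Olshanskii's theorem in \cite{ols} --- which is about quotients with no proper finite-index subgroups (the Bass--Lubotzky question), not about highly transitive actions, so your citation is misplaced; the highly transitive actions you want come from Chaynikov \cite{cha} or Hull--Osin \cite{ho} --- $G$ surjects onto an infinite finitely presented $\Gamma$ without proper finite-index subgroups; one sets $H=\pi^{-1}(K)$ for a maximal $K<\Gamma$ (Zorn), notes $H\supseteq\ker\pi$, an infinite normal subgroup which is relative I.C.C.\ in $G$ (via loxodromic WPD elements), and applies Lemma \ref{lem: 2nd approach}. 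Your route instead needs the full strength of Theorem \ref{thm: faithful 4-transitive actions} (the content of Corollary \ref{cor: acylindricially hyperbolic and t.full groups}), which your proposal does not establish.

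The concrete gap is in your plan to prove that correspondence. You propose to show $E_M(u_g)=0$ for $g\notin K$ by exploiting ``mixingness'' of the bimodules $\ell^2(HgH)$, justified by $[H:H\cap gHg^{-1}]=\infty$. Infinite index is not enough: such coset bimodules are mixing only under finiteness (almost malnormality) of the two-point stabilizers $H_g:=H\cap gHg^{-1}$, and here $H_g$ is infinite (by $3$-transitivity it acts transitively on the infinite set $X\setminus\{x,g^{-1}x\}$). Indeed, for $h_n\to\infty$ in $H_g$ one has $g^{-1}h_n^{-1}g\in H$ and $u_{h_n}\,\delta_g\,u_{g^{-1}h_n^{-1}g}=\delta_g$, so matrix coefficients of $\ell^2(HgH)$ do not vanish at infinity and no ``absence of almost-invariant vectors'' argument of the kind you invoke can apply. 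What actually does the work in the paper's proof of Theorem \ref{thm: faithful 4-transitive actions} is a combinatorial relative-I.C.C.\ computation: $4$-transitivity yields $L(H_g)'\cap LG=\mathbb{C}1$, except that when the transposition $\sigma$ of $\{x,g^{-1}x\}$ lies in $G$ one only gets $\mathbb{C}1+\mathbb{C}u_{\sigma}$, and an extra trace/idempotent argument is needed --- which also shows your stronger interim claim that $E_M(u_g)\in\mathbb{C}u_g$ for every $g$ is not what the hypotheses directly give. Until Theorem \ref{thm: faithful 4-transitive actions} (or a substitute) is actually proved, your argument does not establish the statement; you acknowledge this yourself, and the sketched analytic shortcut would fail for the reason above.
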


In the last approach, which targets the free group factor case only, we apply Dykema's free decomposition theorem for free group factors \cite{dyk1}, combined with the known Galois correspondence theorem for outer actions by Choda \cite{choda.h} directly.

It is notable that maximal subgroups with infinite index constructed in the first two approaches always contain infinite normal subgroups of the ambient groups, which never happens for many linear groups like $SL_n(\mathbb{Z})$ by Margulis' normal subgroup theorem. Moreover, for a faithful,  2-transitive action (see Definition \ref{def: n-transitive actions} for definitions), the stabilizer subgroup of any point is always maximal but never contains any nontrivial normal subgroups of the ambient group. It is therefore intriguing to ask whether we can prove a similar result for this type of maximal subgroups. This sounds even more plausible since Boutonnet and Carderi have successfully used group dynamics to establish  maximal amenablity for certain von Neumann subalgebras arising from maximal amenable subgroups \cite{bc}. 
Following their dynamical approach in spirit, we found the following simple criterion. 

\begin{thm}\label{thm: faithful 4-transitive actions}
Let $G\curvearrowright X$ be a faithful, 4-transitive action on an infinite set $X$. Let $H$ be the stabilizer subgroup of any point $x$ in $X$. Then $LH$ is a maximal von Neumann subalgebra inside $LG$.
\end{thm}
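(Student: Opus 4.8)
The plan is to combine the (automatic) maximality of $H$ as a subgroup with a dynamical argument—in the spirit of Boutonnet--Carderi—that extracts a single group element from an arbitrary intermediate subalgebra. First I would record the group-theoretic facts. Since $G\curvearrowright X$ is $2$-transitive, $H=G_x$ is a maximal subgroup: any $K$ with $H\subsetneq K\le G$ moves $x$, whence $Kx=X$ and $K=G$ by transitivity. A faithful $2$-transitive action on an infinite set is i.c.c., so $LG$ and $LH$ are $\mathrm{II}_1$ factors. Also $N_G(H)=H$ (if $gx\ne x$ then, by $2$-transitivity, $G_{gx}\ne G_x$), and for $g\notin H$ the subgroup $H\cap gHg^{-1}=G_{\{x,gx\}}$ has infinite index in $H$ because $H$ acts transitively on $X\setminus\{x\}$; hence $\operatorname{Comm}_G(H)=H$, the inclusion $LH\subseteq LG$ is singular, $LH'\cap LG=\mathbb{C}$, and $G$ has exactly two $(H,H)$-double cosets, $H$ and $G\setminus H$, corresponding to the two $G$-orbits on $X\times X$.

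Now suppose $LH\subseteq M\subseteq LG$ with $M\ne LH$; I must show $M=LG$. Since $LH\subseteq M$ we get $M'\cap M\subseteq LH'\cap LG=\mathbb{C}$, so $M$ is again a $\mathrm{II}_1$ factor. It suffices to find one $g_0\in G\setminus H$ with $\lambda_{g_0}\in M$: then $M\supseteq\langle LH,\lambda_{g_0}\rangle''=L\langle H,g_0\rangle=LG$ by maximality of $H$. To locate such a $g_0$, use $E_{LH}(M)\subseteq LH\subseteq M$ and $M\ne LH$ to choose $m\in M$ with $E_{LH}(m)=0$, $m=m^*$, $\|m\|_2=1$; grouping the Fourier expansion of $m$ by left cosets gives $m=\sum_{y\in X\setminus\{x\}}\lambda_{g_y}n_y$ with $g_yx=y$, $n_y=E_{LH}(\lambda_{g_y^{-1}}m)\in LH$, and $w(y):=\|n_y\|_2^2$ is a probability weight on the $H$-set $X\setminus\{x\}$, supported on $P_m:=\{y:n_y\ne0\}$.

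The crux is to produce inside $M$ a nonzero element supported in a single left coset, i.e.\ of the form $\lambda_{g_0}n$ with $0\ne n\in LH$. The mechanism is $\|\cdot\|_2$-averaging over point-stabilizers of $H$: for $p\in P_m$, conjugating $m$ by $\lambda_h$ with $h\in G_{x,p}\le H$ fixes the coset $g_pH$ and permutes the remaining cosets via the $G_{x,p}$-action on $X\setminus\{x,p\}$, which is transitive by $3$-transitivity and $2$-transitive by $4$-transitivity. Choosing finitely many $h_1,\dots,h_N\in G_{x,p}$ that scatter a truncation of the off-$p$ support into pairwise disjoint families of cosets, the average $\tfrac1N\sum_j\lambda_{h_j}m\lambda_{h_j}^{-1}\in M$ annihilates the off-$p$ contribution in $\|\cdot\|_2$; the stronger transitivity is what lets one pick the $h_j$ so as also to control the internal structure of the surviving coset-$g_pH$ term—the relevant cross-terms involve configurations of up to three auxiliary points to be moved off a fixed base point—ensuring a definite amount of $\ell^2$-mass remains there. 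Iterating over $P_m$ and passing to a $\sigma$-weak limit point yields the desired $\lambda_{g_0}n\in M$. (Alternatively this can be organised through the inclusion $LG\subseteq\ell^\infty(X)\rtimes G$, in which $LH$ is the corner $e_x\bigl(\ell^\infty(X)\rtimes G\bigr)e_x$, $E_{LH}$ is compression by $e_x$, and $2$-transitivity says $G\curvearrowright\ell^\infty(X)$ is primitive, with no nontrivial $G$-invariant subalgebras.) I expect this averaging-and-limiting step—in particular the bookkeeping that guarantees the $\ell^2$-mass is not entirely lost at the chosen coset—to be the principal obstacle.

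Finally, from $0\ne\lambda_{g_0}n\in M$, write the polar decomposition $n=w|n|$ with $w\in LH$; by functional calculus $\lambda_{g_0}w\in M$, a partial isometry with source projection $w^*w\in LH\subseteq M$. Left-translating $\lambda_{g_0}w$ by elements of $H$ already fills every left coset $\ne H$ with such partial isometries, and since $M$ is a $\mathrm{II}_1$ factor containing $LH$ a short additional argument upgrades this to $\lambda_{g_0}\in M$ for some $g_0\in G\setminus H$; by the reduction above, $M=LG$. Hence $LH$ is a maximal von Neumann subalgebra of $LG$.
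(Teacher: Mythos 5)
Your overall strategy has a genuine gap at its central step. In the averaging $\tfrac1N\sum_j\lambda_{h_j}m\lambda_{h_j}^{-1}$ with $h_j\in G_{x,p}$, disjointness of the sets $h_jF$ does force the off-$p$ contribution to vanish in $\|\cdot\|_2$, but the surviving $p$-coset contribution is $\tfrac1N\sum_j\lambda_{h_j}(\lambda_{g_p}n_p)\lambda_{h_j}^{-1}$, and conjugation by $G_{x,p}$ moves this vector around \emph{inside} the coset $g_pH$; nothing prevents these conjugates from being almost orthogonal as well (already if $n_p$ is a single group unitary $u_k$ and the elements $h_jg_pkh_j^{-1}$ are distinct, the average has $\|\cdot\|_2$-norm $N^{-1/2}$), so the same averaging that kills the off-$p$ mass can kill the $p$-mass. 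The ``bookkeeping'' you defer is exactly the missing proof, and it is not clear it can be carried out in this form. Moreover, even granting an element $0\neq\lambda_{g_0}n\in M$ with $n\in LH$, the promised ``short additional argument'' upgrading it to $\lambda_{g_0}\in M$ is not routine: the natural way to do it is to conjugate the range projection of $n$ by the group unitaries of $H\cap g_0^{-1}Hg_0$ and average, which requires knowing that $L(H\cap g_0^{-1}Hg_0)'\cap LG$ is (essentially) trivial --- and that relative-commutant computation, which is where $4$-transitivity genuinely enters, is never established in your write-up. Finally, that commutant is \emph{not} always trivial: when the transposition $\sigma$ of $x$ and $g_0^{-1}x$ lies in $G$ (e.g.\ $G=S_\infty$), it contains $u_\sigma$, and your scheme does not address this case at all.

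The paper's proof is considerably more direct and avoids the delicate $\ell^2$-averaging entirely: for an intermediate algebra $P$ with expectation $E:LG\to P$ and $g\notin H$, the element $u_g^*E(u_g)$ automatically lies in $L(g^{-1}Hg\cap H)'\cap LG$, and the whole content is a combinatorial claim, proved with $4$-transitivity (really $4$ points: a fixed pair $\{x,g^{-1}x\}$ plus two auxiliary points), that this commutant is $\mathbb{C}1$, or $\mathbb{C}1+\mathbb{C}u_\sigma$ when $\sigma\in G$. In the first case $E(u_g)\in\mathbb{C}u_g$ for every $g$, so $P=LJ$ for an intermediate subgroup $J$ and maximality of $H$ finishes; in the second case one replaces $g$ by $\sigma$ (using $G=H\sqcup H\sigma H$) and a short trace/idempotent computation shows $E(u_\sigma)\in\{0,u_\sigma\}$. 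I would suggest reorganizing your argument around the expectation onto the intermediate algebra rather than onto $LH$: it linearizes the problem, makes the role of $4$-transitivity explicit, and forces you to confront the exceptional transposition case which your current approach overlooks.
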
  

A prototype for which this theorem applies is the natural permutation action  $S_{\infty}\curvearrowright \mathbb{N}$, where $S_{\infty}$ denotes the group of permutations of $\mathbb{N}$ with finite support. More generally, we can apply known results on constructing faithful, highly transitive (hence also 4-transitive) actions for various groups to get the following statement directly.

\begin{cor}\label{cor: acylindricially hyperbolic and t.full groups}
Let $G$ be either a countably acylindrically hyperbolic group with trivial finite radical or the topological full group of a minimal \'etale groupoid over the Cantor set. Then there exists some maximal subgroup $H$ with infinite index such that $LH$ is also a maximal von Neumann subalgebra inside $LG$. Moreover, $H$ can be chosen to contain no nontrivial normal subgroups of $G$.
\end{cor}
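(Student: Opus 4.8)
The plan is to derive the corollary from Theorem \ref{thm: faithful 4-transitive actions} by invoking, in each of the two cases, a known construction of a faithful action that is at least $4$-transitive (indeed highly transitive), and then using Theorem \ref{thm: i.c.c. hyperbolic groups}-style reasoning only to translate the group-theoretic conclusion into the von Neumann algebra conclusion. So the first step is to recall the transitivity input. For a countably acylindrically hyperbolic group $G$ with trivial finite radical, I would cite the result (Hull--Osin, and the refinement making the action faithful) that such a $G$ admits a faithful, highly transitive action on a countably infinite set $X$; highly transitive means $n$-transitive for every $n$, so in particular $4$-transitive. For the topological full group $[[\mathcal{G}]]$ of a minimal étale groupoid over the Cantor set, I would cite the corresponding theorem (in the spirit of Nekrashevych / Juschenko--Monod-type arguments and the more recent work establishing high transitivity for such full groups) giving a faithful, highly transitive action on a countably infinite set.

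The second step is purely formal: fix a point $x \in X$ and let $H = \mathrm{Stab}_G(x)$. Since the action is faithful and $4$-transitive on an infinite set, Theorem \ref{thm: faithful 4-transitive actions} applies verbatim and gives that $LH$ is a maximal von Neumann subalgebra of $LG$. Because $LH \neq LG$ (e.g.\ since $H$ has infinite index, as $X$ is infinite and the action is transitive, so $[G:H] = |X| = \infty$), this is a proper maximal subalgebra. Moreover $H$ is a maximal subgroup of $G$: any subgroup $K$ with $H \lneq K \leq G$ acts on $X$ with strictly larger $G$-orbit structure through the point $x$, and $2$-transitivity forces $K \cdot x = X$, whence $K = G$ by the orbit–stabilizer argument; and $H$ has infinite index as just noted.

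The third step is the ``moreover'' clause, that $H$ can be chosen to contain no nontrivial normal subgroup of $G$. Here I would use that faithfulness of $G \curvearrowright X$ means $\bigcap_{g \in G} g H g^{-1} = \ker(G \curvearrowright X) = \{e\}$, so the normal core of $H$ is trivial; hence $H$ contains no nontrivial normal subgroup of $G$ at all (not merely for a suitable choice), which is even stronger than asserted. This finishes the proof once the transitivity inputs are in hand.

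The main obstacle — really the only nontrivial point — is locating and correctly citing the high-transitivity results in the two cases, and checking that the cited actions are genuinely faithful (some constructions a priori produce actions with kernel, which must be quotiented out or avoided; the trivial finite radical hypothesis in the acylindrically hyperbolic case is exactly what guarantees faithfulness can be arranged, and minimality of the groupoid plays the analogous role for topological full groups). Everything after that is an immediate application of Theorem \ref{thm: faithful 4-transitive actions} plus the elementary orbit–stabilizer bookkeeping above, so no further calculation is needed.
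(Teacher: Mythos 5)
Your proposal is correct and follows essentially the same route as the paper: exhibit a faithful, highly transitive (hence $4$-transitive) action and apply Theorem \ref{thm: faithful 4-transitive actions} to a point stabilizer, with maximality of $H$, infinite index, and triviality of the normal core all coming from $2$-transitivity and faithfulness exactly as you describe. The only difference is that for the topological full group case the paper needs no external high-transitivity theorem: it simply takes the natural action on any infinite orbit in the Cantor set, which is faithful and highly transitive because minimality makes orbits dense --- this disposes of the ``main obstacle'' you flag, while for the acylindrically hyperbolic case the citation is Hull--Osin, as you say.
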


Note that this corollary provides unified new solutions to the last two parts of Question \ref{question: ge's} by taking $G=S_{\infty}$ and $F_2$ respectively. Furthermore, by modifying the proof of Theorem \ref{thm: faithful 4-transitive actions}, we will prove this theorem still holds for a particular faithful, 3-transitive action (see Proposition \ref{prop: maximal for PSL_2(Q)}). This also gives a new solution to the first part of Question \ref{question: ge's}. However, we do not know whether this part still has a positive answer if we further assume the ambient non-hyperfinite II$_1$ factor is non-thin (e.g. the free group factor $LF_n$ for $n\geq 3$) in the sense of Ge-Popa \cite{gepopa}. See Subsection \ref{subsection: two endding questions} for more discussion on this.

We refer the readers to \cite{dgo,osin_book} for background of relative/acylindrically hyperbolic groups and other related notions and to \cite{kr_1, kr_2} for basic facts on von Neumann algebras.

\paragraph{\textbf{On the method}}Let $H$ be a maximal subgroup in $G$. It is not hard to observe that $LH$ is maximal in $LG$ iff for any von Neumann subalgebra $P$ containing $LH$, we have $u_g^*E(u_g)\in \mathbb{C}1$ for all $g\in G$, where $E: LG\twoheadrightarrow P$ denotes the conditional expectation. Our method is based on this observation and we just try to find sufficient conditions on $H$ and $G$ to guarantee $u_g^*E(u_g)\in \mathbb{C}1$ for all $g\in G$.

The rest of the paper is organized as follows. In Section \ref{section: preliminaries}, we prepare some facts on twisted crossed product in the first approach. 
Section \ref{section: three approaches} is splitted into 3 subsections, each consisting of one proof of Theorem \ref{thm: solve ge's Q}. In Subsection \ref{subsection: 2nd approach}, we also prove a general version of Theorem \ref{thm: i.c.c. hyperbolic groups}, i.e. Theorem \ref{thm: general version of i.c.c. hyperbolic groups}. In Section \ref{section: transitive actions}, we first present an example showing maximal subgroups do not necessarily generate maximal von Neumann subalgebras. Then we prove Theorem \ref{thm: faithful 4-transitive actions} and Corollary \ref{cor: acylindricially hyperbolic and t.full groups}: establishing maximality for von Neumann subalgebras arising from a distinct class of maximal subgroups related to faithful, 4-transitive actions. 
In Section \ref{section: remarks}, we first observe maximal von Neumann subalgebras which appeared in both Theorem \ref{thm: general version of i.c.c. hyperbolic groups} and Theorem \ref{thm: faithful 4-transitive actions} are also rigid in the sense of Longo \cite{longo}. Then we present two other (counter-)examples of maximal von Neumann subalgebras which do not seem to fit into our main theorems. In particular, we prove Proposition \ref{prop: maximal for PSL_2(Q)}, which is an example of faithful, 3-transitive action such that the stabilizer subgroups still generate maximal von Neumann subalgebras. Then we finish this paper by asking two questions on free groups and free group factors.

\paragraph{\textbf{Notations}} We fix some notations used later.
\begin{itemize}
\item For a von Neumann algebra $N$, $Aut(N)$, $\mathcal{Z}(N)$, $\mathcal{U}(N)$, $\mathcal{P}(N)$, $\mathfrak{F}(N)$, $N^t$ denotes its automorphism group, center, unitary group, the set of projections, fundamental group, $t$-amplification respectively. 
\item We usually denote groups by letters like $G$, $H$, $K_0$, $K$ and $\Gamma$.
\item For groups $H<G$, $H\backslash G/H$ denotes the double coset space.
\item $E_P$ or $E$ usually denotes the conditional expectation from the ambient group von Neumann algebra onto an intermediate von Neumann subalgebra $P$, which is normal and trace preserving.
\end{itemize}

\section{Preliminaries}\label{section: preliminaries}
\subsection{Twisted crossed product}
Let us recall the standard construction of twisted crossed product.
For more on cocycle actions/twisted crossed product, see \cite{choda.m, sut, cj,  popa_06, cam}\cite[p. 9]{ccssww}.

First, let us recall the definition of cocycle actions.

Let $N$ be a von Neumann algebra acting on a Hilbert space $\mathcal{H}$. A \textbf{cocycle action} of a discrete group $K$ on $N$ is a pair $(\sigma, v)$, where $\sigma: K\to Aut(N)$ and $v: K\times K\to \mathcal{U}(N)$ satisfy the following conditions for all $k, l, m\in K$,
\begin{eqnarray*}
\sigma_k\sigma_l&=&ad(v(k, l))\sigma_{kl},\\
v(k, l)v(kl,m)&=&\sigma_k(v(l, m))v(k, lm),\\
v(1, l)&=&v(l, 1)=1.
\end{eqnarray*}

The \textbf{twisted crossed product} of $N$ by $K$, say $N\rtimes_{(\sigma, v)}K$, is then defined as the von Neumann algebra 
acting on $\ell^2(K,\mathcal{H})$ generated by $\pi_{\sigma}(N)$ and $\lambda_v(K)$, where $\pi_{\sigma}$ is the faithful 
normal representation of $N$ on $\ell^2(K, \mathcal{H})$ defined by 
$$(\pi_{\sigma}(x)\xi)(l)=\sigma_{l^{-1}}(x)\xi(l),$$
while, for each $k\in K$, $\lambda_v(k)$ is the unitary operator on $\ell^2(K,\mathcal{H})$ defined by 
$$(\lambda_v(k)(\xi))(l)=v(l^{-1}, k)\xi(k^{-1}l), ~ \forall x\in N, \xi\in\ell^2(K,\mathcal{H}), l\in K.$$

One easily checks that the covariance formula 
$$\pi_{\sigma}(\sigma_k(x))=ad(\lambda_v(k))(\pi_{\sigma}(x))$$
holds for all $k\in K, x\in N$, and also that 
$$\lambda_v(k)\lambda_v(l)=\pi_{\sigma}(v(k, l))\lambda_v(kl)$$
holds for all $k, l\in K$.

Next, assume $N$ is a II$_1$ factor, recall that a cocycle action is \textbf{free} if $\sigma_k$ is \textbf{properly outer}, i.e. $\sigma_k$ cannot be implemented by unitary elements in $N$ for all $k\neq e$ in $K$. We can check that the cocycle action is free iff $N'\cap (N\rtimes_{(\sigma, v)}K)=\mathbb{C}$, i.e. $N$ is \textbf{irreducible} in $N\rtimes_{(\sigma, v)}K$.

We record a well-known proposition below, c.f. \cite[Theorem 11]{choda.m}\cite[II, Proposition 3.17]{sut} \cite[Proposition 3]{bedos}.

\begin{proposition}\label{prop: realize ambient group factor as a twisted crossed product}
Let $1\to H\to G\overset{\pi}{\to} K\to 1$ be an exact sequence of discrete groups. Then there exists a cocycle action $(\sigma, v)$ of $K$ on $LH$ such that $LG\cong LH\rtimes_{(\sigma, v)}K.$
\end{proposition}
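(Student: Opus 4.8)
The plan is to construct the cocycle action explicitly using a set-theoretic section of $\pi$ and then verify that the resulting twisted crossed product is spatially isomorphic to $LG$. First I would fix a section $s: K \to G$ of $\pi$ with $s(1) = e$, so that every $g \in G$ can be written uniquely as $g = h \cdot s(k)$ with $h \in H$ and $k = \pi(g)$. From this section one reads off the usual data: for $k \in K$ define $\sigma_k \in Aut(LH)$ by extending the inner automorphism $h \mapsto s(k) h s(k)^{-1}$ of $H$ (this lands in $H$ because $H$ is normal), and for $k, l \in K$ define the cocycle $v(k,l) = s(k) s(l) s(kl)^{-1} \in H$, viewed as a unitary in $LH$ via the canonical embedding $H \hookrightarrow \mathcal{U}(LH)$. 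The normalization $s(1) = e$ gives $v(1,l) = v(l,1) = 1$, and the two cocycle identities for $(\sigma, v)$ are exactly the associativity relations among the $s(k)$'s, so they hold automatically; this is the routine bookkeeping step.

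Next I would exhibit the isomorphism. The natural candidate is the map sending $\pi_\sigma(u_h) \lambda_v(k) \mapsto u_{h \cdot s(k)} \in LG$ for $h \in H$, $k \in K$. To see this is well-defined and extends to a normal $*$-isomorphism, the cleanest route is to identify both algebras concretely on $\ell^2(G)$: decompose $\ell^2(G) \cong \ell^2(K, \ell^2(H))$ using the bijection $G \cong H \times K$ induced by the section, and check that under this identification the left regular representation $\lambda^G$ of $G$ restricted to $H$ becomes $\pi_\sigma$ (of the left regular representation of $H$ on $\ell^2(H)$), while the unitaries $\lambda^G_{s(k)}$ become precisely the $\lambda_v(k)$. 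Granting that, $LG = \lambda^G(G)''$ is generated by $\pi_\sigma(LH)$ and $\{\lambda_v(k) : k \in K\}$, which is by definition $LH \rtimes_{(\sigma,v)} K$. A trace-preservation check (matching the canonical traces via $h \cdot s(k) \mapsto \delta_{k=1}\tau_{LH}(u_h)$) then confirms the map is the stated isomorphism.

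The main obstacle, such as it is, is the spatial identification in the previous paragraph: one must carefully match the formulas $(\pi_\sigma(x)\xi)(l) = \sigma_{l^{-1}}(x)\xi(l)$ and $(\lambda_v(k)\xi)(l) = v(l^{-1},k)\xi(k^{-1}l)$ against the action of $\lambda^G$ after transporting along $G \cong H \times K$, and the appearance of $l^{-1}$ and of the cocycle term $v(l^{-1},k)$ is exactly what falls out of rewriting a product $g \cdot g'$ in terms of the section coordinates — the $v$ term records the failure of $s$ to be a homomorphism. This is a finite computation with no conceptual content, but it is where sign/inverse conventions must be pinned down to agree with the conventions fixed in Section \ref{section: preliminaries}. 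Since the statement is explicitly flagged as well known with references to \cite{choda.m}, \cite{sut}, \cite{bedos}, I would keep this verification brief, citing those sources for the routine details and only highlighting that the cocycle $v$ is the one associated to the chosen section $s$, as this particular description of $(\sigma, v)$ is what the later arguments will use.
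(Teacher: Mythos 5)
Your proposal is correct and follows essentially the same route as the paper: choose a set-theoretic section of $\pi$, define $\sigma_k$ by conjugation and $v(k,l)$ as the failure of the section to be multiplicative, and then verify the isomorphism via the explicit unitary identification $\ell^2(G)\cong\ell^2(K,\ell^2(H))$ intertwining $\lambda^G$ with $\pi_\sigma$ and $\lambda_v$, which is exactly the paper's operator $W$. The only difference is bookkeeping of left/right coset and inverse conventions, which you correctly flag as the step to pin down against the formulas fixed in Section \ref{section: preliminaries}.
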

\begin{proof}[Sketch of proof]
First, let us construct $(\sigma, v)$ in order to form the twisted crossed product.
For each $k\neq 1$ in $K$, choose a $n_k\in G$ such that $\pi(n_k)=k$ and set $n_e=1$. Define $\sigma: K\to Aut(H)$ by 
$\sigma_k(h)=n_khn_k^{-1}(h\in H)$, and $v: K\times K\to H$ by $v(k, l)=n_kn_ln_{kl}^{-1} (k, l\in K)$. Then use $(\sigma, v)$ to form the twisted crossed product.

To show $LG\cong LH\rtimes_{(\sigma, v)}K$, it suffices to construct a unitary operator $W: \ell^2(K, \mathcal{H})\cong \ell^2(G)$ which intertwines the two von Neumann algebras, where $\mathcal{H}=\ell^2(H)$.

For $\xi\in \ell^2(K, \mathcal{H})$, define $(W\xi)(g)=\xi(\pi(g))(n_{\pi(g^{-1})}g)$. It is routine to check this $W$ is what we need.
\end{proof}

\subsection{Connes-Jones cocycles}

Let $\Gamma=\langle S \rangle$ be an infinite group and $\pi: F_S\to \Gamma\to 1$ with kernel $ker(\pi)\cong F_{\infty}$ (this holds if $(\Gamma,S)\neq (F_S, S)$). 

Then, we can check that $N:=L(ker(\pi))<M:=LF_S$ is irreducible and regular and $\Gamma=\mathcal{N}_M(N)/\mathcal{U}(N)$, see \cite{choda.m}. 

This gives rise to $LF_S\cong N\rtimes_{(\sigma, v)} \Gamma$ for some free cocycle action $(\sigma,v)$ of $\Gamma$ on $N$ as mentioned above. This cocycle is the so-called \textbf{Connes-Jones cocycle} \cite{cj} \cite[Appendix A.2]{popa_06}.



\section{Three approaches to prove Theorem \ref{thm: solve ge's Q}}\label{section: three approaches}

In this section, we give three proofs for Theorem \ref{thm: solve ge's Q}.

\subsection{First approach}\label{subsection: first approach}

From now on, we take $\Gamma$ to be a finitely generated infinite non-free group which contains a maximal subgroup $K$ with infinite index. For example, one may take $\Gamma$ to be the Houghton group $H_n$ for any $n\geq 2$, see \cite[Example 3.6]{cornulier} and references therein. 
We will work with two short exact sequences, one is $1\to ker(\pi)\to F_S\overset{\pi}{\to} \Gamma\to 1$ as mentioned in the previous section and the other one is 
$1\to ker(\pi)\to K'\overset{\pi}{\to}K\to 1$, where $K'=\pi^{-1}(K)$.

Now, we prepare some lemmas. First, we record a well-known fact on free groups.

\begin{lem}\label{lem: maximal subgroups with infinite index in free groups are not finitely generated}
Every maximal subgroup with infinite index in a finite rank nonabelian free group is isomorphic to $F_{\infty}$.
\end{lem}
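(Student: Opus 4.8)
The plan is to reduce the claim to the assertion that a maximal subgroup of infinite index in a free group $F$ of finite rank $n\ge 2$ cannot be finitely generated. Once this reduction is available, the conclusion follows at once: by the Nielsen--Schreier theorem $H$ is free; it must have infinite rank, since a free group of finite rank is finitely generated; and since $H$ is a subgroup of the countable group $F$ it is itself countable, so its rank is exactly $\aleph_0$, i.e. $H\cong F_\infty$.

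To prove that a maximal subgroup $H$ of infinite index is not finitely generated, I would argue by contradiction, the engine being Marshall Hall's theorem (equivalently, the Stallings folding / core-graph picture): every finitely generated subgroup of a free group is a free factor of a subgroup of finite index. So suppose $H$ is finitely generated, maximal, and $[F:H]=\infty$, and pick $F'$ with $H\le F'\le F$, $[F:F']<\infty$, and $F'=H*K$ for some subgroup $K$. First, $K\neq\{1\}$, for otherwise $H=F'$ would have finite index, a contradiction. Now I split into two cases. If $F'\neq F$, then $H\subsetneq F'\subsetneq F$ already violates maximality of $H$. If $F'=F$, then $F=H*K$ with $K$ a nontrivial free group; choose a subgroup $K_0$ with $\{1\}\neq K_0\subsetneq K$ (every nontrivial free group, e.g. $\mathbb{Z}$, has one) and set $L:=\langle H,K_0\rangle=H*K_0$ inside $F$. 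Comparing normal forms with respect to the decomposition $F=H*K$ shows $H\subsetneq L\subsetneq F$, again contradicting maximality. In both cases we reach a contradiction, so $H$ is infinitely generated, and the reduction above completes the proof.

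The only substantive input is the structure theorem of Marshall Hall (or, equivalently, Stallings foldings); granted that, everything else is routine. The point requiring a little care is the case $F'=F$, i.e. when $H$ is itself a free factor of $F$: there the intermediate subgroup $L$ must be produced by hand and the two proper inclusions $H\subsetneq L\subsetneq F$ checked via the free-product normal form in $H*K$ (together with the trivial remark that every nontrivial free group has a proper nontrivial subgroup). I do not anticipate any genuine obstacle.
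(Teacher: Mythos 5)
Your proof is correct and takes essentially the same route as the paper: both rest on M. Hall's theorem that a finitely generated subgroup of a finite-rank free group is a free factor of a finite-index subgroup, and both contradict maximality by exhibiting a proper intermediate subgroup, with the infinite-rank conclusion then following from Nielsen--Schreier. The only cosmetic difference is that the paper handles the possibility $H\ast K_0=F_n$ uniformly by inserting $H\ast\langle s^2\rangle$ for $s\in K_0$, whereas you split into cases and pass to a proper nontrivial subgroup of the complementary free factor.
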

\begin{proof}
Let $H$ be a maximal subgroup in $F_n$ for $1<n<\infty$ with $[F_n: H]=\infty$. By \cite{hall}, $F_n$ satisfies the following property (called Hall's property):

If $K$ is a finitely generated subgroup of $F_n$, then there exists some subgroup $K_0$ of $F_n$ such that $[F_n: K*K_0]<\infty$. 

Assume $H$ is finitely generated, then take $K=H$ in Hall's property. We deduce $K_0\neq \{e\}$ as $[F_n: H]=\infty$. But this contradicts the fact that $H$ is maximal in $F_n$ since $H\lneq H*\langle s^2 \rangle\lneq H*K_0<F_n$ for any nontrivial element $s\in K_0$. Hence $H$ is not finitely generated and therefore $H\cong F_{\infty}.$
\end{proof}

\begin{lem}\label{lem: lift of maximal subgroup is still maximal}
$K'<F_S$ is a maximal subgroup with infinite index and $K'\cong F_{\infty}$.
\end{lem}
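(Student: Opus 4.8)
The plan is to split the statement into its three assertions — that $K'$ is maximal in $F_S$, that it has infinite index, and that it is isomorphic to $F_\infty$ — and to derive the first two from a general observation about preimages under quotient maps and the third from Lemma~\ref{lem: maximal subgroups with infinite index in free groups are not finitely generated}.

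The key input I would isolate first is the following elementary remark, which is exactly the ``flexibility'' exploited throughout the first approach: if $\phi\colon G \twoheadrightarrow Q$ is a surjective homomorphism of groups and $M < Q$ is a maximal subgroup, then $\phi^{-1}(M)$ is a maximal subgroup of $G$ and $[G : \phi^{-1}(M)] = [Q : M]$. Both assertions are immediate from the lattice isomorphism theorem: $L \mapsto \phi(L)$ is an inclusion-preserving bijection between the subgroups of $G$ containing $\ker\phi$ and all subgroups of $Q$. Since $\phi^{-1}(M) \supseteq \ker\phi$, any subgroup $L$ with $\phi^{-1}(M) \subseteq L \subseteq G$ automatically contains $\ker\phi$, hence corresponds to a subgroup $\phi(L)$ with $M \subseteq \phi(L) \subseteq Q$; maximality of $M$ forces $\phi(L) = M$ or $\phi(L) = Q$, i.e.\ $L = \phi^{-1}(M)$ or $L = G$, while $\phi^{-1}(M) \neq G$ because $M \neq Q$. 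For the index, $\phi$ descends to a bijection between the coset spaces $G/\phi^{-1}(M)$ and $Q/M$.

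Applying this with $\phi = \pi\colon F_S \twoheadrightarrow \Gamma$ and $M = K$ gives at once that $K' = \pi^{-1}(K)$ is maximal in $F_S$ and that $[F_S : K'] = [\Gamma : K] = \infty$. For the isomorphism type, note that since $\Gamma$ is finitely generated I may take $S$ finite, and since $\Gamma$ is infinite and non-free it is not cyclic, so $|S| \geq 2$ and $F_S$ is a nonabelian free group of finite rank. Thus $K'$ is a maximal subgroup of infinite index in a finite-rank nonabelian free group, and Lemma~\ref{lem: maximal subgroups with infinite index in free groups are not finitely generated} yields $K' \cong F_\infty$.

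I do not expect any genuine obstacle here: the argument is just the correspondence theorem together with the already-established Lemma~\ref{lem: maximal subgroups with infinite index in free groups are not finitely generated}. The only point that needs a word of care is checking that $F_S$ is genuinely nonabelian of finite rank, i.e.\ that $|S| \geq 2$, which is where one uses that $\Gamma$ is infinite \emph{and} non-free rather than merely non-free.
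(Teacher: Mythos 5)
Your proposal is correct and follows essentially the same route as the paper: the paper likewise pushes an intermediate subgroup $K'<L<F_S$ forward under $\pi$, uses maximality of $K$ in $\Gamma$ to conclude $L=K'$ or $L=F_S$ (your correspondence-theorem phrasing is just a cleaner packaging of this, plus the coset bijection for the index), and then invokes Lemma~\ref{lem: maximal subgroups with infinite index in free groups are not finitely generated} for $K'\cong F_\infty$. Your extra remark that $S$ is finite with $|S|\geq 2$ (since $\Gamma$ is finitely generated, infinite and non-free) is a point the paper leaves implicit, and it is a correct and welcome check.
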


\begin{proof}
Assume $K'<L<F_S$ for some subgroup $L$. By applying $\pi$ to these groups, we get $K<\pi(L)<\Gamma$. As $K<\Gamma$ is maximal, we deduce that either 
$K=\pi(L)$ or $\pi(L)=\Gamma$. If $K=\pi(L)$, then $L<\pi^{-1}(K)=K'$, therefore $K'=L$; if $\pi(L)=\Gamma$, then $F_S<ker(\pi)L=L$, i.e. $L=F_S$. Clearly, $[F_S: K']=[\Gamma, K]=\infty$.
To see $K'\cong F_{\infty}$, just apply Lemma \ref{lem: maximal subgroups with infinite index in free groups are not finitely generated}.
\end{proof}

The following fact was proved in \cite[Theorem 2]{nt} for normalized 2-cocycles (i.e. $v_{g, g^{-1}}=id$ for all $g\in \Gamma$), but this assumption is not used in the proof. We include the proof for completeness.

\begin{lem}[Galois correspondence for cocycle actions]\label{lem: twisted Galois correspondence}
Let $\Gamma\overset{(\sigma, v)}{\curvearrowright} N$ be a free cocycle action, where $N$ is a II$_1$ factor. Then every intermediate von Neumann subalgebra $P$ with $N<P<N\rtimes_{(\sigma, v)}\Gamma$ is of the form $P=N\rtimes_{(\sigma, v)}K_0$ for some subgroup $K_0<\Gamma$.
\end{lem}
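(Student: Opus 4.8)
The plan is to show that any intermediate subalgebra $P$ is spanned by the group elements $\lambda_v(k)$ it contains, together with $N$, so that the set $K_0 := \{k \in \Gamma : \lambda_v(k) \in P\}$ is the desired subgroup. First I would observe that for each $g \in \Gamma$, the map $\Theta_g = \mathrm{ad}(\lambda_v(g)) : N\rtimes_{(\sigma,v)}\Gamma \to N\rtimes_{(\sigma,v)}\Gamma$ restricts to an automorphism of $N$ (by the covariance formula $\pi_\sigma(\sigma_g(x)) = \mathrm{ad}(\lambda_v(g))(\pi_\sigma(x))$), and that the collection $\{\Theta_g\}_{g\in\Gamma}$ acts on $N\rtimes_{(\sigma,v)}\Gamma$ so that the fixed-point algebra of the adjoint action of $N$'s unitary group detects the $N$-bimodule grading. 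Concretely, writing a general element $x \in N\rtimes_{(\sigma,v)}\Gamma$ as an $\ell^2$-convergent sum $x = \sum_{g\in\Gamma} \pi_\sigma(x_g)\lambda_v(g)$ with $x_g \in N$, I would define the "Fourier coefficient" maps $x \mapsto x_g$ and check they are normal and $N$-bimodular in the appropriate twisted sense.

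The key step is the following: if $x = \sum_g \pi_\sigma(x_g)\lambda_v(g) \in P$ and $x_h \neq 0$ for some particular $h$, then $\pi_\sigma(x_h)\lambda_v(h) \in P$, and moreover $\lambda_v(h) \in P$. For the first part I would use an averaging argument over the unitary group $\mathcal{U}(N)$: since $N \subset P$ and $P$ is a von Neumann algebra, the element $\int_{\mathcal{U}(N)} u\, x\, \Theta_g(u^*)\, d u$ (a suitable $w^*$-limit of convex combinations, or a conditional expectation onto the fixed points of a compact-group-like action — here one must be slightly careful since $\mathcal{U}(N)$ is not compact, so instead I would use that $N$ is a factor and run the standard argument isolating a single Fourier component via a net of unitaries, exactly as in the untwisted Galois correspondence of Izumi–Longo–Popa / Choda) lies in $P$ and equals (up to a scalar coming from freeness) $\pi_\sigma(x_g)\lambda_v(g)$ for the chosen $g$. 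Freeness of the cocycle action — equivalently irreducibility $N' \cap (N\rtimes_{(\sigma,v)}\Gamma) = \mathbb{C}$ — is exactly what guarantees this isolation works and that no cancellation among distinct group components can occur. Then, since $N$ is a factor and $x_h \in N$ is a nonzero element with $\pi_\sigma(x_h)\lambda_v(h) \in P$, I take the polar decomposition $\pi_\sigma(x_h) = w|\pi_\sigma(x_h)|$; multiplying on the left by $w^* \in N \subset P$ is not immediately enough, but using that $x_h^* x_h \in N$ is a nonzero positive element in the factor $N$, one can find $a, b \in N$ with $a x_h b$ invertible (indeed equal to a unitary after rescaling), hence $\pi_\sigma(a)\,\pi_\sigma(x_h)\lambda_v(h)\,\pi_\sigma(\sigma_h^{-1}(b))$ is a unitary multiple of $\lambda_v(h)$ lying in $P$, so $\lambda_v(h) \in P$.

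Granting this, I would finish as follows. Set $K_0 = \{k \in \Gamma : \lambda_v(k) \in P\}$. From $\lambda_v(k)\lambda_v(l) = \pi_\sigma(v(k,l))\lambda_v(kl)$ and $\lambda_v(k)^{-1} = \pi_\sigma(v(k^{-1},k))^{-1}\cdots$ (a unitary in $N$ times $\lambda_v(k^{-1})$), together with $N \subset P$, one checks $K_0$ is a subgroup. Clearly $N\rtimes_{(\sigma,v)}K_0 \subseteq P$. Conversely, the key step shows every $x \in P$ has all its Fourier components supported on $K_0$, and since such $x$ is the $\ell^2$-limit of its finite truncations $\sum_{g \in F} \pi_\sigma(x_g)\lambda_v(g)$ (which lie in $N\rtimes_{(\sigma,v)}K_0$ by what was just shown), and this subalgebra is $\sigma$-weakly closed, we get $P \subseteq N\rtimes_{(\sigma,v)}K_0$. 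The main obstacle I anticipate is making the "isolate a single Fourier coefficient inside $P$" step fully rigorous in the twisted setting: the twist $v$ enters the computation of $u\,\lambda_v(g)\,u^*$ and of the averaging projection, and one has to verify that the twisted covariance relations do not obstruct the argument — but since $v$ takes values in $N$ itself and $N \subset P$, all the twisting unitaries are already in $P$, so morally the twist is harmless; the write-up just needs to carry the cocycle identities through carefully.
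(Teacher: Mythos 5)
Your overall architecture (define $K_0=\{k:\lambda_v(k)\in P\}$, show every element of $P$ has Fourier support in $K_0$, check $K_0$ is a group via the cocycle identities) is the right shape, but the two steps that carry all the weight are not actually established, and one of them is wrong as stated. First, the claim that since $x_h\neq 0$ in the II$_1$ factor $N$ one can find $a,b\in N$ with $a x_h b$ invertible (``a unitary after rescaling'') is false: the range projection of $a x_h b$ is equivalent to a subprojection of the left support of $x_h$, so if $x_h$ has non-full support (e.g.\ $x_h$ a proper projection) no choice of $a,b$ makes $a x_h b$ invertible. The correct way to pass from ``some element of $P$ has nonzero $h$-component'' to $\lambda_v(h)\in P$ is to note that $V_h=\{x_h: x\in P\}$ is a weakly closed subspace with $a V_h\sigma_h(b)\subseteq V_h$, hence a weakly closed two-sided ideal of the factor $N$, hence all of $N$, so $1\in V_h$ — but this still presupposes your first key step. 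Second, that first step (isolating a single Fourier component of an element of $P$ inside $P$ by averaging over $\mathcal{U}(N)$) is precisely the hard point, and you only gesture at it: averaging twisted conjugates over the non-compact group $\mathcal{U}(N)$ and killing the off-components requires a Popa-style convex-hull argument using proper outerness of each $\sigma_k$, $k\neq e$, which is substantially more than ``freeness, i.e.\ irreducibility, is exactly what guarantees this works,'' and you have not carried it out.

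The paper's proof avoids all of this with a short argument you did not use: take the normal trace-preserving conditional expectation $E_P:N\rtimes_{(\sigma,v)}\Gamma\twoheadrightarrow P$ and compute, for $a\in\mathcal{U}(N)$ and $s\in\Gamma$, that $a\,u_s^*E_P(u_s)=u_s^*\sigma_s(a)E_P(u_s)=u_s^*E_P(\sigma_s(a)u_s)=u_s^*E_P(u_s a)=u_s^*E_P(u_s)\,a$, using only that $E_P$ is an $N$-bimodule map (since $N\subseteq P$). Hence $u_s^*E_P(u_s)\in N'\cap(N\rtimes_{(\sigma,v)}\Gamma)=\mathbb{C}$ by freeness, so $E_P(u_s)=\lambda_s u_s$ with $\lambda_s$ scalar; then $\lambda_s\neq 0$ forces $u_s\in P$, and normality of $E_P$ gives $P=E_P(N\rtimes_{(\sigma,v)}\Gamma)\subseteq N\rtimes_{(\sigma,v)}K_0\subseteq P$. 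No Fourier-isolation or averaging is needed; the conditional expectation onto $P$ does the isolation for you. I would recommend rewriting your argument around $E_P$ in this way; as it stands, the proposal has a genuine gap at its central step and an incorrect lemma in the follow-up step.
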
  
\begin{proof}
The proof is similar to the genuine action case given by Choda \cite{choda.h}.  Indeed, for all $a\in\mathcal{U}(N)$, we have that $a$ commutes with $u_s^*E_P(u_s)$ for all $s\in \Gamma$.
Indeed, $au_s^*E_P(u_s)=u_s^*u_sau_s^*E_P(u_s)=u_s^*\sigma_s(a)E_P(u_s)=u_s^*E_P(\sigma_s(a)u_s)=u_s^*E_P(u_sa)=u_s^*E_P(u_s)a$.

Therefore, $u_s^*E_P(u_s)\in N'\cap N\rtimes_{\sigma,v}\Gamma=\mathbb{C}$ since $(\sigma, v)$ is free if and only if $N'\cap N\rtimes_{(\sigma, v)}\Gamma=\mathbb{C}$, i.e. $N$ is irreducible in $N\rtimes_{(\sigma, v)}\Gamma$. Therefore, $E_P(u_s)=\lambda_su_s$ for some $\lambda_s\in\mathbb{C}$ and all $s\in \Gamma$. 

Define $K_0=\{s\in\Gamma: u_s\in P\}$. We check that $K_0$ is a subgroup of $\Gamma$.

Indeed, $e\in K_0$ as $u_e=id\in P$. If $s, t\in K_0$, then $st\in K_0$ since $u_su_t=v(s, t)u_{st}$ implies $u_{st}=v(s, t)^{-1}u_su_t\in P$. Similarly, if $s\in K_0$, then $u_{s^{-1}}u_s=v(s^{-1}, s)u_e$ implies $u_{s^{-1}}=v(s^{-1}, s)u_s^*\in P$, hence $s^{-1}\in K_0$. Therefore, $K_0$ is a group.

Clearly, $\lambda_s\neq 0\Rightarrow s\in K_0$. This implies (using also that $E_P$ is normal) that $P=E_P(N\rtimes_{(\sigma, v)}\Gamma)<N\rtimes_{(\sigma, v)}K_0<P$. Hence, $P=N\rtimes_{(\sigma, v)}K_0$.
\end{proof}

\begin{lem}\label{lem: compatibility for twisted subalgebras}
Let $\phi: LF_S\cong L(ker(\pi))\rtimes_{(\sigma, v)}\Gamma$ be an isomorphism appearing in Proposition \ref{prop: realize ambient group factor as a twisted crossed product}. Then $\phi(LK')=L(ker(\pi))\rtimes_{(\sigma, v)}K$.
\end{lem}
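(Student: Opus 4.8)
The plan is to unwind the explicit isomorphism $\phi$ constructed in the sketch of Proposition \ref{prop: realize ambient group factor as a twisted crossed product} and check directly that it carries the canonical generators of $LK'$ into $L(ker(\pi))\rtimes_{(\sigma,v)}K$, and conversely. Recall that $\phi^{-1}$ (or rather the intertwiner $W$) is built from a fixed set-theoretic section $n_\bullet\colon \Gamma\to F_S$ of $\pi$ with $n_e=1$; the point is that since $K=\pi(K')$ is a subgroup of $\Gamma$, we may — and should — choose this section so that $n_k\in K'$ for every $k\in K$. This is possible because $\pi$ restricted to $K'$ still surjects onto $K$, so we first pick a section $K\to K'$ of $\pi|_{K'}$ and then extend it arbitrarily to a section of $\pi$ on all of $\Gamma$. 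With this compatible choice of section, the cocycle data $(\sigma,v)$ restricts: for $k,l\in K$ we have $v(k,l)=n_kn_ln_{kl}^{-1}\in K'\cap ker(\pi)=ker(\pi)$ (automatically, as $v$ is $ker(\pi)$-valued) and $\sigma_k$ preserves $ker(\pi)$, so $L(ker(\pi))\rtimes_{(\sigma,v)}K$ genuinely sits inside $L(ker(\pi))\rtimes_{(\sigma,v)}\Gamma$.

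Next I would identify, under $\phi$, the image of the group elements: for $h\in ker(\pi)$, $\phi(u_h)=\pi_\sigma(u_h)$, and for $g\in F_S$ with $\pi(g)=k$, write $g=h\,n_k$ with $h:=g n_k^{-1}\in ker(\pi)$, so that $\phi(u_g)=\pi_\sigma(u_h)\lambda_v(k)$. Thus a generator $u_g$ of $LF_S$ lands in $L(ker(\pi))\rtimes_{(\sigma,v)}K$ precisely when $k=\pi(g)\in K$ and $h=gn_k^{-1}\in ker(\pi)$ — and the latter holds for all $g$. Since $K'=\pi^{-1}(K)$, the generators $\{u_g: g\in K'\}$ of $\phi(LK')$ are exactly the $u_g$ with $\pi(g)\in K$, and each such maps into $\pi_\sigma(L(ker(\pi)))\vee\lambda_v(K)=L(ker(\pi))\rtimes_{(\sigma,v)}K$. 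This gives $\phi(LK')\subseteq L(ker(\pi))\rtimes_{(\sigma,v)}K$. For the reverse inclusion, note $\pi_\sigma(L(ker(\pi)))=\phi(L(ker(\pi)))\subseteq\phi(LK')$ since $ker(\pi)<K'$, and for $k\in K$ we have $n_k\in K'$ by our choice of section, whence $\lambda_v(k)=\phi(u_{n_k})\in\phi(LK')$; as these generate $L(ker(\pi))\rtimes_{(\sigma,v)}K$, equality follows.

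The only mildly delicate point — and the one I would be most careful about — is the very first step: the statement of Lemma \ref{lem: compatibility for twisted subalgebras} refers to ``an isomorphism $\phi$ appearing in Proposition \ref{prop: realize ambient group factor as a twisted crossed product},'' and that proposition's $\phi$ depends on the auxiliary section. Strictly, the lemma is only true for a section compatible with $K'$ in the sense above (a different section would conjugate $L(ker(\pi))\rtimes_{(\sigma,v)}K$ by a unitary, changing it to a unitarily equivalent but not necessarily identical subalgebra). So I would either state explicitly at the outset that we fix the section so that $n_k\in K'$ for $k\in K$, or observe that the cocycle action $(\sigma,v)$ was already fixed in the Connes--Jones setup of Section \ref{section: preliminaries} via such a compatible section. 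Once that bookkeeping is pinned down, everything else is the routine verification sketched above, using only the covariance relations $\pi_\sigma(\sigma_k(x))=\mathrm{ad}(\lambda_v(k))(\pi_\sigma(x))$ and $\lambda_v(k)\lambda_v(l)=\pi_\sigma(v(k,l))\lambda_v(kl)$ together with the formula $(W\xi)(g)=\xi(\pi(g))(n_{\pi(g^{-1})}g)$ for $W$.
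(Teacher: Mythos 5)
Your proof is correct and follows essentially the same route as the paper: keep the fixed section of $\pi$, note that restricted to $K$ it gives the cocycle data for the exact sequence $1\to ker(\pi)\to K'\to K\to 1$, and verify through the explicit intertwiner $W$ that $\phi$ matches the canonical generators of $LK'$ with $\pi_\sigma(L(ker(\pi)))$ and $\lambda_v(K)$. The only remark is that your ``delicate point'' about choosing the section compatibly is vacuous: since $K'=\pi^{-1}(K)$ is the full preimage, any $n_k$ with $\pi(n_k)=k\in K$ automatically lies in $K'$, so every isomorphism appearing in Proposition \ref{prop: realize ambient group factor as a twisted crossed product} already has the required property.
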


\begin{proof}
Notice we have a short exact sequence $1\to ker(\pi)\to K'\overset{\pi|_{K'}}{\to} K\to 1$. Recall the definition of $(\sigma, v)$ which appeared in the proof of 
Proposition \ref{prop: realize ambient group factor as a twisted crossed product} depends on a fixed section map for $\pi$, so one can use the same section map, but restricted to $K$. This means we use the same $(\sigma, v)$ but for this new exact sequence. Then the same proof shows $LK'\cong L(ker(\pi))\rtimes_{(\sigma, v)}K$.
\end{proof}

We are now ready to prove Theorem \ref{thm: solve ge's Q}. Recall that $1\to ker(\pi)\to F_S\overset{\pi}{\to}\Gamma\to 1$ and $1\to ker(\pi)\to K'\overset{\pi}{\to}K\to 1$ are two short exact sequences. Here, $\Gamma$ is a finitely generated infinite non-free group which contains a maximal subgroup $K$ with infinite index and $K'=\pi^{-1}(K)\cong F_{\infty}$ by Lemma \ref{lem: maximal subgroups with infinite index in free groups are not finitely generated}.

\begin{proof}[First proof of Theorem \ref{thm: solve ge's Q}]
As $K<\Gamma$ is a maximal subgroup and $(\sigma, v)$ is a free cocycle action, we deduce that $L(ker(\pi))\rtimes_{(\sigma,v)} K<L(ker(\pi))\times_{(\sigma, v)} \Gamma$ is maximal by Lemma \ref{lem: twisted Galois correspondence}. Then by Lemma \ref{lem: compatibility for twisted subalgebras}, we get $LF_{\infty}\cong LK'<LF_S$ is maximal.

It is possible to assume $|S|=2$ directly by taking $\Gamma$ to be an infinite simple group with two generators, e.g. a Tarski monster group \cite{ol_1, ol_2}. Nevertheless, we can also argue as follows:

Voiculescu's amplification formula \cite{voi} tells us that
$LF_2\cong LF_{|S|}\bar\otimes M_{\sqrt{|S|-1}}(\mathbb{C})$. 
Note that we can enlarge $|S|$ so that $\sqrt{|S|-1}$ is an integer.
Therefore, we know that 
\begin{align*}
LF_{\infty}\cong L(F_{\infty})\bar\otimes M_{\sqrt{|S|-1}}(\mathbb{C})<LF_S\bar\otimes M_{\sqrt{|S|-1}}(\mathbb{C})\cong LF_2
\end{align*}
is maximal by Ge-Kadison's splitting theorem \cite{gekadison} or a simple calculation. Here, the first isomorphism is based on the fact that the fundamental group of $LF_{\infty}$ is equal to $\mathbb{R}_{+}^{*}$, as proved in \cite{rad_92} by Radulescu, or just $\mathbb{Q}^+\setminus\{0\}\subseteq \mathfrak{F}(LF_{\infty})$ as proved by Voiculescu in \cite{voi}. 
\end{proof}

\subsection{Second approach}\label{subsection: 2nd approach}

\begin{proof}[Second proof of Theorem \ref{thm: solve ge's Q}]
It suffices to observe that we can completely avoid the use of twisted cocycle actions from the first approach by isolating the crucial properties used there or isolating the crucial properties used in the proof of Choda's Galois correspondence theorem for outer actions on II$_1$ factors \cite{choda.h}. More precisely, we only need to observe the following lemma.

\begin{lem}\label{lem: 2nd approach}
Let $H_0<H<G$ be countable discrete groups. Assume the following conditions hold.
\begin{itemize}
\item[(1)] $gH_0g^{-1}<H$ for all $g\in G$;
\item[(2)] $H_0<G$ is \textbf{relative I.C.C.}, i.e. $\sharp\{kgk^{-1}: k\in H_0\}=\infty$ for all $e\neq g\in G$.
\end{itemize}
Let $LH<P<LG$ be an intermediate von Neumann subalgebra. Then $P=LJ$ for some intermediate subgroup $H<J<G$. 
\end{lem}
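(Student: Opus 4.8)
\textbf{Proof plan for Lemma \ref{lem: 2nd approach}.}
The plan is to mimic the argument already used for Lemma \ref{lem: twisted Galois correspondence}, replacing the role of irreducibility of $N$ in $N\rtimes_{(\sigma,v)}\Gamma$ by the two group-theoretic hypotheses. Write $M=LG$, let $\{u_g\}_{g\in G}$ be the canonical unitaries, and let $E=E_P\colon M\to P$ be the trace-preserving conditional expectation. The first step is to show that for every $g\in G$ the element $c_g:=u_g^*E(u_g)$ commutes with $LH_0$. Indeed, for $a\in\mathcal U(LH_0)$, since $u_gau_g^*\in LH_0\subset LH\subset P$ (this is exactly where hypothesis (1) enters) we get, exactly as in the proof of Lemma \ref{lem: twisted Galois correspondence},
\[
a\,c_g=u_g^*(u_gau_g^*)E(u_g)=u_g^*E\bigl((u_gau_g^*)u_g\bigr)=u_g^*E(u_ga)=c_g\,a.
\]
So $c_g\in LH_0'\cap M$. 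The second step is to identify this relative commutant: hypothesis (2), relative I.C.C., says precisely that every nontrivial $G$-conjugacy class meets $H_0$ in an infinite set, which is the standard condition forcing $LH_0'\cap LG=\mathbb C 1$ (expand an element of the commutant in the $u_g$ basis and use that its Fourier coefficients are constant on $H_0$-conjugacy classes, hence supported on finite such classes, hence only on $e$). Therefore $c_g\in\mathbb C1$, i.e. $E(u_g)=\lambda_g u_g$ for a scalar $\lambda_g\in\mathbb C$.

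The third step is the combinatorial bookkeeping, identical in form to Choda's argument and to the end of Lemma \ref{lem: twisted Galois correspondence}. Set $J=\{g\in G: u_g\in P\}$. One checks $J$ is a subgroup of $G$ (it is closed under products and inverses because the $u_g$ multiply up to no cocycle at all here — the relations $u_gu_h=u_{gh}$ hold on the nose), and $\lambda_g\neq0\Rightarrow u_g=\lambda_g^{-1}E(u_g)\in P\Rightarrow g\in J$. Since $E$ is normal and trace-preserving, for $x\in P$ with Fourier expansion $x=\sum_g x_g u_g$ we have $x=E(x)=\sum_g x_g\lambda_g u_g$, forcing $x_g=0$ whenever $\lambda_g\neq1$; combined with the previous line this gives that every $x\in P$ is supported on $J$, so $P\subseteq LJ$, while $LJ\subseteq P$ by definition of $J$. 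Hence $P=LJ$. Finally $H\subseteq J$ because $LH\subseteq P$ forces $u_h\in P$ for all $h\in H$, so $H<J<G$ as claimed.

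I expect no serious obstacle here; the content is entirely in extracting the right two hypotheses, which the statement has already done. The only point requiring a little care is the normality argument in the third step: one should phrase it in terms of $\|\cdot\|_2$-density of finite sums $\sum x_g u_g$ and continuity of $E$, rather than pretending the Fourier series converges in norm, so that ``$x$ is supported on $J$'' is justified rigorously. A secondary remark worth making in the write-up is that hypothesis (1) is equivalent to saying $H_0$ is contained in the ``$G$-core relative to $H$'', and that in the intended application $H=H_0=K'$ with $K'\cong F_\infty$ normalized appropriately — but for the lemma itself no such interpretation is needed.
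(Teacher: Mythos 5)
Your proof is correct and follows essentially the same route as the paper's: conjugating by unitaries of $LH_0$ shows $u_g^*E(u_g)\in LH_0'\cap LG$ via hypothesis (1), relative I.C.C. collapses this relative commutant to $\mathbb{C}1$, and then the subgroup $J=\{g\in G: u_g\in P\}$ together with normality ($\|\cdot\|_2$-continuity) of $E$ yields $P=LJ$ with $H<J<G$. One cosmetic slip: $u_gau_g^*$ lies in $L(gH_0g^{-1})\subset LH$, not necessarily in $LH_0$, but the containment in $P$ is all your computation uses, so nothing breaks.
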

\begin{proof}
Denote by $E_P: LG\twoheadrightarrow P$ the conditional expectation onto $P$. Then for all $g\in G$, $u_g^*E_P(u_g)\in LH_0'\cap LG$ by condition (1), which equals $\mathbb{C}1$ by condition (2). Therefore, $E_P(u_g)=c_gu_g$ for some $c_g\in\mathbb{C}$ for all $g\in G$. Then define $J=\{g\in G: u_g\in P\}$. One can check that $H<J<G$ is a subgroup and $LJ<P=E_P(LG)<LJ$, hence $P=LJ$.
\end{proof}
Now, following the notation in Subsection \ref{subsection: first approach}, we can apply the above lemma to $G=F_S, H=\pi^{-1}(K)$ and $H_0=ker(\pi)$. As $H<G$ is maximal, we deduce that $LH<LG$ is maximal. Then we can proceed as the first approach to finish the proof by arguing we can assume $|S|=2$. 
\end{proof} 

\begin{remark}
We will use a local version of Lemma \ref{lem: 2nd approach} in the proof of Proposition \ref{prop: sl_2z is maximal in sl_2(z[1/p])}.
Special cases of Lemma \ref{lem: 2nd approach} are known before, see e.g. \cite[Corollary 3.8]{cd}. 
\end{remark}

Next, we observe that the above approach actually works in a more general context: certain acylindrically hyperbolic groups (see \cite{dgo, osin}). More precisely, we have the following general version of Theorem \ref{thm: solve ge's Q}. 

\begin{thm}\label{thm: general version of i.c.c. hyperbolic groups}
Let $G$ be a torsion free finitely presented properly relative hyperbolic group, e.g. a torsion free, non-elementary hyperbolic group. Then there exists a maximal subgroup $H$ of $G$ with infinite index such that $LH$ is maximal inside $LG$.
\end{thm}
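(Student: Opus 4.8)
The plan is to deduce Theorem \ref{thm: general version of i.c.c. hyperbolic groups} from Lemma \ref{lem: 2nd approach} by constructing a suitable chain $H_0 < H < G$. The key input is Olshanskii's theorem \cite{ols}: if $G$ is a torsion free finitely presented (properly relatively) hyperbolic group, then $G$ contains an infinite normal subgroup $H_0$ such that the quotient $G/H_0$ is an infinite group with a prescribed embeddedness property — in the relevant incarnation, $G/H_0$ is itself hyperbolic (or at least contains a maximal subgroup of infinite index). Actually the cleanest route: Olshanskii shows that one can find a quotient $\pi\colon G \twoheadrightarrow Q$ with $Q$ a nontrivial torsion free hyperbolic group (e.g. a free group $F_2$, or even a suitable two-generated group), and with $\ker\pi$ infinite; alternatively, for $G$ itself non-elementary hyperbolic, one can take $Q$ to be a Tarski-monster-like or any infinite two-generated quotient admitting a maximal subgroup $K$ of infinite index. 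Then set $H = \pi^{-1}(K)$ and $H_0 = \ker \pi$.

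First I would verify the two hypotheses of Lemma \ref{lem: 2nd approach}. Condition (1), that $g H_0 g^{-1} < H$ for all $g \in G$, is immediate since $H_0 = \ker\pi$ is normal in $G$ and $H_0 < H$. Condition (2), the relative I.C.C. property $\sharp\{k g k^{-1} : k \in H_0\} = \infty$ for all $e \neq g \in G$, is where the hyperbolicity genuinely enters: one needs that the infinite normal subgroup $H_0$ acts on $G$ by conjugation with all nontrivial orbits infinite. This should follow from the standard fact that in a torsion free (relatively) hyperbolic group, the centralizer of any nontrivial element is virtually cyclic, hence a maximal cyclic subgroup; so for $g \neq e$, $C_G(g)$ is cyclic, and since $H_0$ is infinite and not contained in any cyclic subgroup (being normal and the group non-elementary), $H_0 \cap C_G(g)$ has infinite index in $H_0$, forcing the conjugacy orbit $\{k g k^{-1}: k \in H_0\}$ to be infinite. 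One must be slightly careful in the properly relatively hyperbolic case — here one invokes that $G$ has no nontrivial finite normal subgroup and that elements have virtually cyclic centralizers away from the parabolics, which is part of the proper relative hyperbolicity hypothesis (this is precisely why "properly" is assumed).

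Next I would record the group-theoretic conclusion. With conditions (1) and (2) verified, Lemma \ref{lem: 2nd approach} gives that every intermediate von Neumann algebra $LH < P < LG$ is of the form $P = LJ$ for a group $H < J < G$. Since $K < Q$ is maximal, $H = \pi^{-1}(K) < G$ is maximal: any $J$ with $H < J < G$ projects to a subgroup $\pi(J)$ with $K < \pi(J) < Q$, so $\pi(J) = K$ or $Q$; in the first case $J < \pi^{-1}(K) = H$ so $J = H$, in the second $H_0 \subseteq J$ and $\pi(J) = Q$ give $J = G$. Combining, $LH$ is a maximal von Neumann subalgebra of $LG$, and $[G:H] = [Q:K] = \infty$. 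This completes the argument modulo the existence of the quotient $Q$ with the stated properties.

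The main obstacle is the existence and correct formulation of the Olshanskii-type quotient: one needs, for every torsion free finitely presented properly relatively hyperbolic $G$, a normal subgroup $H_0$ that is simultaneously (a) infinite, (b) proper with infinite-index quotient-subgroup structure, and (c) such that $G/H_0$ carries a maximal subgroup of infinite index. For non-elementary hyperbolic $G$ this is classical — $G$ surjects onto many two-generated infinite groups, e.g. onto $F_2$ (via a free quotient) or onto an infinite group with infinitely many ends, and one can arrange $G/H_0$ to again be hyperbolic with a maximal infinite-index subgroup by the Margulis–Soifer/Olshanskii machinery or simply by mapping onto $F_2$ and invoking Lemma \ref{lem: maximal subgroups with infinite index in free groups are not finitely generated}. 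For the relatively hyperbolic case one must cite the appropriate SQ-universality / Dehn-filling results (Osin, Dahmani–Guirardel–Osin) to produce the infinite normal kernel with hyperbolic quotient. Verifying the relative I.C.C. property in full generality — especially handling elements whose centralizer meets parabolic subgroups — is the secondary technical point, and is exactly where the "properly" hypothesis (trivial finite radical, virtually cyclic centralizers of hyperbolic elements) is used.
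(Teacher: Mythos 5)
Your overall skeleton is the same as the paper's: pull back a maximal subgroup of infinite index from a proper quotient, check that the kernel is relative I.C.C., and feed this into Lemma \ref{lem: 2nd approach} together with the (correct) observation that preimages of maximal subgroups are maximal. However, both key ingredients are left in a state where the argument would fail as written. First, the existence of the quotient: a general torsion free non-elementary hyperbolic group does \emph{not} surject onto $F_2$ (any hyperbolic group with Property (T) is an obstruction, since (T) passes to quotients), and Tarski monsters are infinitely presented torsion groups, so neither of your suggested targets works in the stated generality; moreover, if $G=F_2$ a free quotient has trivial kernel, killing condition (1) of Lemma \ref{lem: 2nd approach}. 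What the paper actually uses is Olshanskii \cite{ols} (hyperbolic case) and \cite[Corollary 1.7]{amo} (properly relatively hyperbolic case): $G$ has a \emph{nontrivial finitely presented proper quotient $\Gamma$ with no proper finite-index subgroups}. Maximal subgroups of the finitely generated $\Gamma$ exist by Kuratowski--Zorn, and the absence of finite-index subgroups forces them to have infinite index; the kernel is nontrivial and hence infinite because $G$ is I.C.C. You flag ingredient (c) as the main obstacle but do not supply this mechanism, and the routes you do propose are not available.

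Second, the relative I.C.C. verification. Your claim that in a torsion free (relatively) hyperbolic group every nontrivial element has virtually cyclic centralizer is false outside the genuinely hyperbolic case: parabolic elements can have large centralizers (e.g. $G$ hyperbolic relative to $\mathbb{Z}^2$, as for cusped hyperbolic $3$-manifold groups), and ``properly relatively hyperbolic'' only means the peripheral subgroups are proper, not that centralizers away from parabolics are controlled — so the hypothesis does not repair the argument, and the parabolic elements $g$ are exactly the ones your orbit count $H_0/(H_0\cap C_G(g))$ does not handle. The paper instead proves a lemma valid for all torsion free acylindrically hyperbolic $G$ and any infinite normal $K\lhd G$: $K$ is s-normal, hence itself acylindrically hyperbolic, hence contains a loxodromic WPD element $c$; if the $\langle c\rangle$-conjugation orbit of $g\neq e$ is finite, then $g\in E(c)$ and, by almost malnormality of the hyperbolically embedded subgroup $E(c)$ (\cite[Proposition 4.33]{dgo}), the conjugation stabilizer of $g$ lies in $E(c)$, which has infinite index in $K$; this covers parabolic elements as well. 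For honestly hyperbolic $G$ your centralizer argument (together with the fact that an infinite normal subgroup of a non-elementary hyperbolic group is never virtually cyclic) does go through, so the gap is specifically in the relatively hyperbolic generality that the theorem claims.
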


\begin{proof}
It suffices to argue the following ingredients are available in our context.
\begin{itemize}
\item[(1)] There exists a quotient of $G$, say $\Gamma$ which contains a maximal subgroup with infinite index.
\item[(2)] $K:=Ker(G\twoheadrightarrow \Gamma)<G$ is relative I.C.C.
\end{itemize}

The first condition is known by \cite[Corollary 1.7]{amo}. In the non-elementary hyperbolic groups setup, we can use directly the deep work of Olshanskii in \cite{ols}. In both contexts, the authors proved that if $G$ is non-elementary (which is the case if $G$ is assumed to be I.C.C., in particular, torsion free by \cite[Theorem 8.14]{dgo}), then $G$ has a non-trivial finitely presented quotient $\Gamma$ without proper subgroups of finite index. Therefore, we can always find a maximal subgroup of $\Gamma$ with infinite index by Kuratowski-Zorn's lemma. Moreover, note that $K$ is infinite as it is nontrivial and $G$ is I.C.C.

The second condition is a standard fact based on the north-south dynamics for loxodromic elements, which always exist in the infinite normal subgroup $K$. More precisely, we have the following lemma, which should be well-known to experts. 
\end{proof}

\begin{lem}
Let $G$ be a torsion free acylindrically hyperbolic group and let $K\lhd G$ be infinite. Then $K<G$ is relative I.C.C.
\end{lem}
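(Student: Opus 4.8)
The plan is to show that a torsion free acylindrically hyperbolic group $G$ acts acylindrically on a hyperbolic space $X$, restrict this action to the infinite normal subgroup $K$, and use the existence of a loxodromic element in $K$ to witness infinite conjugacy classes. First I would invoke the structure theory of acylindrically hyperbolic groups (see \cite{dgo, osin}): $G$ admits a (non-elementary) acylindrical action on a hyperbolic space $X$, and since $K \lhd G$ is infinite, $K$ is not elementary either — indeed an infinite normal subgroup of an acylindrically hyperbolic group contains loxodromic elements (this is standard; e.g. \cite[Lemma 7.2]{osin} or the results in \cite{dgo} on normal subgroups). Fix such a loxodromic $t \in K$; by north-south dynamics on $\partial X$, $t$ has exactly two fixed points $t^{+}, t^{-}$ on the Gromov boundary, and the powers $t^n$ push every point of $X$ that is not too close to $t^{-}$ towards $t^{+}$.

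Next I would fix an arbitrary $e \neq g \in G$ and aim to produce infinitely many distinct conjugates $t^{n} g t^{-n}$ (or more generally $k g k^{-1}$ with $k \in K$), which suffices since $t^n \in K$. The idea is the usual ping-pong / escape argument: since $G$ acts on $X$ without fixed points at infinity in a strong sense and $g \neq e$, either $g$ moves the axis of $t$ far away for infinitely many ``directions'', or $g$ itself has small translation length and one compares fixed point sets. Concretely, if $g$ does not fix $\{t^{+}, t^{-}\}$ setwise, then for large $|n|$ the conjugate $t^{n} g t^{-n}$ maps a neighborhood of $t^{-}$ near $g(t^{+})$-translated region, and by choosing a subsequence along which the relevant basepoints escape one gets that the translation lengths or the displacement of a fixed basepoint $x_0 \in X$ under $t^{n} g t^{-n}$ tend to infinity, forcing these elements to be pairwise distinct. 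If instead $g$ stabilizes $\{t^{+}, t^{-}\}$, then $g$ lies in the (virtually cyclic, hence here infinite cyclic by torsion freeness) maximal elementary subgroup $E(t)$ containing $t$; but then, because $G$ is acylindrically hyperbolic and hence $E(t)$ is not normal (it has infinite index and $G$ is not virtually cyclic), I can pick $h \in G$ with $h t^{+} \notin \{t^{+}, t^{-}\}$, replace $t$ by the loxodromic $h t h^{-1} \in K$ (still in $K$ since $K$ is normal), and run the previous case with this new loxodromic whose fixed points are not stabilized by $g$. Either way one extracts an infinite family of distinct conjugates of $g$ by elements of $K$, which is exactly relative I.C.C.

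The main obstacle I expect is the bookkeeping in the escape argument: making precise that ``$t^{n} g t^{-n}$ moves a fixed basepoint arbitrarily far'' requires controlling the interaction of the acylindricity constants with the quasi-geodesic axis of $t$, and one has to handle the degenerate possibility that $g$ fixes a point of $X$ (elliptic $g$) versus $g$ loxodromic or parabolic separately. The cleanest route is probably to reduce everything to a single statement: in an acylindrically hyperbolic group, for any loxodromic $t$ and any $g \notin E(t)$, the elements $\{t^{n} g t^{-n} : n \in \mathbb{Z}\}$ are pairwise distinct (a consequence of the fact that $E(t)$ is almost malnormal, \cite[Lemma 6.5, Theorem 6.8]{dgo}), combined with the observation that if $g \in E(t) \setminus \{e\}$ we may conjugate $t$ within $K$ to arrange $g \notin E(t')$; this last step uses torsion freeness only to know $E(t)$ is infinite cyclic, but the almost malnormality argument works regardless. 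Assembling these standard facts yields the lemma, and I would state it by citing the almost malnormality of maximal elementary subgroups rather than redoing the ping-pong from scratch.
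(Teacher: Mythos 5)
Your argument is correct and rests on the same two pillars as the paper's proof: the existence of a loxodromic WPD element $t$ in the infinite normal subgroup $K$ (via \cite{osin}), and the fact that the maximal elementary subgroup $E(t)$ is hyperbolically embedded, hence almost malnormal (\cite{dgo}), which yields that $\{t^{n}gt^{-n}: n\in\mathbb{Z}\}$ is infinite whenever $g\notin E(t)$ — exactly the paper's first step. The only genuine divergence is how the residual case $g\in E(t)\setminus\{e\}$ is finished: the paper keeps the same $t$, observes that two equal $K$-conjugates $h^{-1}gh=f^{-1}gf$ force $fh^{-1}\in E(t)$ (again by almost malnormality plus the infinite order of $g$), and then shows $[K:K\cap E(t)]=\infty$ because otherwise $K$ would be virtually cyclic; you instead replace $t$ by $t'=hth^{-1}\in K$ for some $h\notin E(t)$ (normality of $K$ keeps $t'$ in $K$), note that $g\in E(t)\cap E(t')$ would give an infinite intersection and hence $h\in E(t)$, so $g\notin E(t')$, and rerun the first case with $t'$. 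Both finishes are valid; yours avoids the index/virtually-cyclic count, while the paper's avoids changing the loxodromic and makes the coset structure of the conjugates explicit. One aside in your last paragraph should be corrected: torsion freeness is not dispensable in that step. Its role (both in your conjugation trick and in the paper's coset argument) is precisely to guarantee that $g$ has infinite order, so that $g$ lying in two conjugates of $E(t)$ produces an infinite intersection; without it the statement itself can fail, e.g. for an acylindrically hyperbolic group with nontrivial finite radical, where a central torsion element has finite (even trivial) $K$-conjugacy class.
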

\begin{proof}
First, one needs to argue that $K$ contains a loxodromic element (See \cite[Lemma 4.7]{gg} for a proof of a special case). By \cite[Corollary 1.5]{osin}, we know that the class of acylindrically hyperbolic groups is closed under taking s-normal, in particular, infinite normal subgroups. Hence $K$ itself is acylindrically hyperbolic and hence contains loxodromic elements (independent of looking at them from $K$ or $G$). 

Now, the proof is essentially the same as \cite[Theorem 8.14 $(b)\Rightarrow (c)$]{dgo}. Let us record it here. 
Let $c$ be one loxodromic element in $H$. Note that we can assume $c$ is also a WPD element by \cite[AH3 in Theorem 1.2]{osin}. 
Recall that $E(c)$, the unique maximal virtually cyclic subgroup containing $c$ (\cite[Corollary 2.9]{dgo}), is a hyperbolic embedded subgroup of $G$, i.e. 
$E(c)\hookrightarrow_hG$ by \cite[Theorem 6.8]{dgo}. Now, we follow the proof of \cite[Theorem 8.14]{dgo}.   

Let $g\in G\setminus \{1\}$. If $\{c^{-n}gc^n: n\in\mathbb{Z}\}$ is infinite, then we are done. Otherwise, $c^{-m}gc^m=g$ for some $m\in\mathbb{N}$. Hence $|g^{-1}E(c)g\cap E(c)|=\infty$ and $g\in E(c)$ by Proposition 4.33 in \cite{dgo}. 
Now, if $h^{-1}gh=f^{-1}gf$ for some $f, h\in G$, then $(fh^{-1})^{-1}g(fh^{-1})=g$. Hence
$g\in (fh^{-1})^{-1}E(c)(fh^{-1})\cap E(c)$. As $G$ is torsion free, we know $g$ has infinite order. Hence $|(fh^{-1})^{-1}E(c)(fh^{-1})\cap E(c)|=\infty$ and $fh^{-1}\in E(c)$. As a group which contains a non-degenerate hyperbolically embedded subgroup is non-elementary, the index of $E(c)$ in $G$ is infinite. To finish the proof, it suffices to argue that $[K: K\cap E(c)]=\infty$. 

Assume the index is finite, then $[K:\langle c\rangle]=[K: K\cap E(c)][K\cap E(c): \langle c\rangle]\leq [K: K\cap E(c)][E(c): \langle c\rangle]<\infty$, i.e. $K$ is virtually cyclic, a contradiction as acylindrically hyperbolic groups always contain non-abelian free groups.
\end{proof}

\subsection{Third approach}\label{subsection: 3rd proof}

\begin{proof}[Third proof of Theorem \ref{thm: solve ge's Q}]
Recall that Dykema proved in \cite{dyk1} that $LF_n\cong *_{i=1}^nLH_i$, where $H_i$ are any infinite amenable groups and $2\leq n\leq \infty$. In particular, we have $LF_2\cong L\mathbb{Z}*LH\cong L(\mathbb{Z}*H)=L((*_H\mathbb{Z})\rtimes H)=(*_HL\mathbb{Z})\rtimes H$ and $N:=L((*_H\mathbb{Z})\rtimes K)$ is a maximal von Neumann subalgebra if $K<H$ is a maximal subgroup by Choda's Galois correspondence theorem \cite{choda.h} for the outer action $H\curvearrowright *_HL\mathbb{Z}$. By taking suitable $H$ and $K$ (i.e. $[H:K]=\infty$, say, $H=S_{\infty}, K=Stab_H(\{1\})\cong S_{\infty\setminus \{1\}}\cong S_{\infty}$), one can make sure $[LF_2: N]=\infty$. Now, it suffices to check that $N\cong LF_{\infty}$. Indeed, write $H=\sqcup_{i\in\mathbb{Z}}Kh_i$, then $L((*_H\mathbb{Z})\rtimes K)=L((*_{i\in\mathbb{Z}}(*_{Kh_i}\mathbb{Z}))\rtimes K)\cong L((*_{i\in\mathbb{Z}}(*_K\mathbb{Z}))\rtimes K)\cong L((*_{K}(*_{i\in\mathbb{Z}}\mathbb{Z}))\rtimes K)\cong L((*_{K}F_{\infty})\rtimes K)\cong L(F_{\infty}*K)\cong LF_{\infty}*LK\cong LF_{\infty}$ by Dykema's above result.
\end{proof}

To round out this section, let us mention that one can also prove $LF_{\infty}$ can be embedded as a maximal subfactor inside the interpolated free group factor $LF_t$ for all $1<t<\infty$ \cite{dyk2, rad_94}. Indeed, it is an easy corollary of the amplification formula $(LF_2)^{1/\sqrt{t-1}}\cong LF_{t}$, $\mathfrak{F}(LF_{\infty})=\mathbb{R}_+^*$ and Theorem \ref{thm: solve ge's Q} once we establish the following proposition.

\begin{proposition}
Let $N<M$ be II$_1$ factors. Then the following are equivalent.\\
(1) $N<M$ is maximal.\\
(2) $pNp<pMp$ is maximal for all nonzero projection $p\in N$.\\
(3) $pNp<pMp$ is maximal for some nonzero projection $p\in N$.\\
(4) $N^t<M^t$ is maximal for all $1<t<\infty$.\\
(5) $N^t<M^t$ is maximal for some $1<t<\infty$.
\end{proposition}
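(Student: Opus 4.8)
The plan is to prove a circle of implications together with the standard amplification dictionary for $\mathrm{II}_1$ factors. The key conceptual point is that ``maximality of $N$ in $M$'' is expressible purely in terms of intermediate von Neumann subalgebras, and the operations $X \mapsto pXp$ (for $p \in \mathcal{P}(N)$) and $X \mapsto X^t$ set up a lattice isomorphism between intermediate subalgebras of $N < M$ and those of $pNp < pMp$ (resp. $N^t < M^t$). I would first record this as the main lemma: if $N < M$ are $\mathrm{II}_1$ factors and $p \in \mathcal{P}(N)$ is nonzero, then $P \mapsto pPp$ is an inclusion-preserving bijection from $\{P : N \subseteq P \subseteq M\}$ onto $\{Q : pNp \subseteq Q \subseteq pMp\}$, with inverse $Q \mapsto \{x \in M : pxp \in Q,\ xp = px\}$ — or, more cleanly, the map induced by the canonical isomorphism $pMp \cong p(M \otimes B(\ell^2))p' $-type amplification identification. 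The surjectivity uses that $N' \cap M \subseteq N$ is not needed; what is needed is that $p \in N$ so that $pNp$ is again a factor and the central support of $p$ in any intermediate $P$ is $1$ (since $p \in N \subseteq P$ and $N$ is a factor, $p$ has central support $1$ in $P$, so $pPp$ determines $P$). This is the reduction-by-a-projection statement; it is essentially contained in Ge--Kadison-type arguments and I would cite or sketch it rather than belabor it.

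Granting the lemma, the implications go as follows. $(1)\Rightarrow(2)$: given $pNp \subseteq Q \subseteq pMp$, pull back to an intermediate $N \subseteq P \subseteq M$ with $pPp = Q$; maximality of $N$ forces $P = N$ or $P = M$, hence $Q = pNp$ or $Q = pMp$. $(2)\Rightarrow(3)$ is trivial. $(3)\Rightarrow(1)$: given $N \subseteq P \subseteq M$, the subalgebra $pPp$ is intermediate for $pNp \subseteq pMp$, so it equals $pNp$ or $pMp$; since $P$ is recovered from $pPp$ by the bijection, $P = N$ or $P = M$. For the amplification equivalences $(1)\Leftrightarrow(4)\Leftrightarrow(5)$, I would use that $M^t$ for $t \leq 1$ is realized as $pM p$ inside an amplification for a suitable projection $p$ in (a matrix amplification of) $N$, so that $(1)\Leftrightarrow(2)\Leftrightarrow(3)$ already handles $t \leq 1$; for general $t$, write $M^t = (M_n(M))^{t/n}$ with $t/n \leq 1$ and $N^t = (M_n(N))^{t/n}$, and use $N \subseteq M$ is maximal iff $M_n(N) \subseteq M_n(M)$ is maximal — which itself follows from the reduction lemma applied to the matrix units, i.e. $M_n(N) = (M_n(N)) e_{11} (M_n(N)) e_{11}$-compression recovers $N$. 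So the whole proposition reduces to: (a) the compression bijection lemma, (b) stability of maximality under $N \rightsquigarrow M_n(N)$, and (c) the bookkeeping identity $(X^s)^t = X^{st}$ with $M^t \cong p M_n(M) p$.

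Concretely I would organize the write-up as: state and prove the compression lemma; deduce $(1)\Leftrightarrow(2)\Leftrightarrow(3)$; prove the matrix-amplification invariance of maximality from the compression lemma; then chain $N^t < M^t$ maximal $\Leftrightarrow$ $M_n(N)^{t/n} < M_n(M)^{t/n}$ maximal (choosing $n$ with $t/n < 1$) $\Leftrightarrow$ $M_n(N) < M_n(M)$ maximal (compression by a projection of trace $t/n$) $\Leftrightarrow$ $N < M$ maximal, and note this is uniform in $t$, giving $(1)\Leftrightarrow(4)\Leftrightarrow(5)$ simultaneously. The main obstacle — really the only non-formal point — is establishing the compression bijection, specifically the claim that every intermediate $Q$ with $pNp \subseteq Q \subseteq pMp$ arises as $pPp$ for an intermediate $P$: one must produce $P$ and check $pPp = Q$ and $P$ is a von Neumann algebra. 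The cleanest route is to set $P = \{x \in M : v x v^* \in Q \text{ for all partial isometries } v \in N \text{ with } v^*v \leq p,\ vv^* \leq p\}$, or to invoke that, since $p$ has central support $1$ in any intermediate $P$, the map $P \mapsto pPp$ is injective with a well-understood inverse given by ``filling in'' via matrix units supported under $p$; I expect to lean on the fact that $p \in N$ is a \emph{full} projection in every intermediate algebra to make this go through, and I would cite Ge--Kadison \cite{gekadison} for the relevant reduction/splitting technology rather than reprove it from scratch.
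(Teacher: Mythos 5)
Your overall architecture (a compression lemma giving a correspondence between intermediate subalgebras of $N<M$ and of $pNp<pMp$, then formal transfer of maximality, then matrix amplification and the identity $M^t\cong p\,(M_n(\mathbb{C})\bar\otimes M)\,p$) is sound and is close in spirit to the paper's argument; your fullness observation (that $p\in N$ has central support $1$ in every intermediate $P$, whence $P$ is recovered from $pPp$) correctly handles injectivity and hence $(3)\Rightarrow(1)$. But the one step you yourself flag as the only non-formal point --- surjectivity of $P\mapsto pPp$ --- is not established by either of the explicit constructions you propose, and both would fail. The set $\{x\in M: pxp\in Q,\ xp=px\}$ is wrong because elements of the true preimage need not commute with $p$, and conversely this set contains all of $(1-p)M(1-p)$ and is not an algebra containing $N$ in general. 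The second formula, $P=\{x\in M: vxv^*\in Q \text{ for all partial isometries } v\in N \text{ with } v^*v\leq p,\ vv^*\leq p\}$, has the same defect: any such $v$ satisfies $v=pvp$, so the condition only constrains $pxp$; again $(1-p)M(1-p)\subseteq P$ and $P$ need not be an algebra (for $pxp,pyp\in Q$ one gets $pxyp=(pxp)(pyp)+px(1-p)yp$, and the cross term is uncontrolled). Likewise ``matrix units supported under $p$'' stay inside the corner and cannot fill in the off-corner parts; and Ge--Kadison's splitting theorem, as such, covers the tensor-product/matrix situation (equivalently $\tau(p)=1/n$), not compression by an arbitrary $p\in N$, so the citation does not close the gap.

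The gap is fixable, in two standard ways. Either prove surjectivity directly: given $pNp\subseteq Q\subseteq pMp$, set $P=(N\cup Q)''$ and check $pPp=Q$ by compressing alternating words $n_0q_1n_1\cdots q_kn_k$ (each compression lands in $Q$ because $pn_ip\in pNp\subseteq Q$ and $q_i=pq_ip$), using normality of $x\mapsto pxp$; or use partial isometries $v_1,\dots,v_k\in N$ with $v_i^*v_i\leq p$ and $\sum_i v_iv_i^*=1$ (initial projections under $p$, final projections covering $1$) and set $P=\{x\in M: v_i^*xv_j\in Q\ \forall i,j\}$. The paper itself avoids general-$p$ surjectivity altogether: it proves the $\tau(p)=1/n$ case via $M\cong M_n(\mathbb{C})\bar\otimes pMp$ (Ge--Kadison or a direct matrix-unit calculation), and for arbitrary $p$ it assumes a strict intermediate $A$ in the corner, compresses further to a projection of trace $1/n$, and then expands back using matrix units inside $pNp\subseteq A$ to reach a contradiction --- a route that only ever needs the ``filling in'' direction you already have. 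With one of these repairs your proof goes through; as written, $(1)\Rightarrow(2)$ (and hence the statements $(2)$ and $(4)$) is not yet proved.
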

\begin{proof}
We prove $(1)\Longleftrightarrow(2)\Longleftrightarrow(3)$ below, $(1)\Longleftrightarrow(4)\Longleftrightarrow(5)$ will follow from this proof.

$(1)\Rightarrow (2)$: If $\tau(p)=1/n$ for some positive integer $n$, then $M\cong M_n(\mathbb{C})\bar{\otimes}pMp$ (and similarly for $N$) implies that $pNp<pMp$ is maximal. For the other cases, assume $pNp<pMp$ is not maximal, hence $pNp\lneq A\lneq pMp$ for some von Neumann algebra $A$. Then, take any nonzero projection $q\in pNp$ such that $\tau_{pMp}(q)=1/(n\tau(p))<1$ for some large enough positive integer $n$. Note that  
$qNq<qMq$ is maximal as $\tau(q)=1/n$. Hence, $qNq=qAq$ or $qAq=qMq$. In both cases, we can take a nonzero projection $q'\in qNq$ such that $\tau_{pMp}(q')=1/m$ for some $m>1$. Then, we still have $q'Nq'=q'Aq'$ or $q'Aq'=q'Mq'$. But as $pMp\cong q'Mq'\bar{\otimes}M_m(\mathbb{C})$ (similarly for $pAp$ and $pNp$), we deduce that $pNp=pAp$ or $pAp=pMp$, a contradiction. 

$(2)\Rightarrow (3)$: this is trivial.

$(3)\Rightarrow (1)$: By the proof of $(1)\Rightarrow (2)$, we may assume $\tau(p)=1/n$ for some positive integer $n$ after replacing $p$ by a smaller projection. Then $M\cong M_n(\mathbb{C})\bar{\otimes}pMp$ (and similarly for $N$) implies $N<M$ is maximal by a simple calculation or Ge-Kadison's splitting theorem \cite{gekadison}.
\end{proof}



\section{Maximal subalgebras from faithful, 4-transitive actions}\label{section: transitive actions}

In this section, we prove Theorem \ref{thm: faithful 4-transitive actions} and Corollary \ref{cor: acylindricially hyperbolic and t.full groups}. Let us first explain the motivation behind Theorem \ref{thm: faithful 4-transitive actions}.

In general, if $H<G$ is a maximal subgroup, $LH<LG$ may not be maximal. Indeed, it is easy to check that $G$ is a simple group iff the diagonal subgroup $\Delta(G):=\{(g, g): g\in G\}$ is maximal inside $G\times G$. Nevertheless, $L(\Delta(G))$ is not maximal in $L(G\times G)$ for any nontrivial $G$. 

To see this, let $x=u_{(s, e)}+u_{(e, s)}\in L(G\times G)$, where $s$ is any nontrivial element in $G$. Denote by $\phi$ the automorphism of $L(G\times G)$ induced from the flip automorphism on $G\times G$, i.e. $\phi(s, t)=(t, s)$ for all $(s,t)\in G\times G$.   
Clearly, $L(\Delta(G))\lneq Fix(\phi)\lneq L(G\times G)$, where $Fix(\phi):=\{x\in L(G\times G): \phi(x)=x\}$. Indeed, $x\in Fix(\phi)\setminus L(\Delta(G))$ and $u_{(e, s)}\in L(G\times G)\setminus Fix(\phi)$. For a different example, see Proposition \ref{prop: upper trangular matices is not maximal}.
 
Motivated by this example, it is natural to ask the following question.

\begin{question}\label{question: find sufficient conditions}
Let $G$ be a countable discrete group and $H$ be a maximal subgroup with infinite index. Find sufficient conditions on $G$ and $H$ such that $LH$ is maximal inside $LG$.
\end{question}

Clearly, it is equivalent to asking for conditions on $H<G$ such that $u_g^*E_P(u_g)\in\mathbb{C}1$ for all $g\in G$, where $P$ is any von Neumann subalgebra containing $LH$. Lemma \ref{lem: 2nd approach} shows one sufficient condition is to assume there exists some von Neumann subalgebra $B<LH$ such that $u_gBu_g^*<LH$ for all $g\in G$ and $B'\cap LG=\mathbb{C}$. And a typical choice for this $B$ is $LK$ for some nontrivial normal, relative I.C.C. subgroup $K$ in $G$, just as we did in the first two approaches.

Nevertheless, maximal subgroups do not necessarily contain nontrivial normal subgroups of the ambient groups. Indeed, $\Delta(G)<G\times G$ for any simple nontrivial group $G$ is already such an example. In fact, \cite[Remark 8.8]{gg} shows that those maximal subgroups inside $SL_n(\mathbb{Z})$ constructed using boundary techniques are also of this type. Therefore, it is desirable to study Question \ref{question: find sufficient conditions} for maximal subgroups containing no non-trivial normal subgroups of the ambient groups.

Motivated by this question, we found such a necessary condition as in Theorem \ref{thm: faithful 4-transitive actions}. To prove it, let us first recall the notion of $n$-transitive actions. 
\begin{definition}\label{def: n-transitive actions}
Let $n$ be any positive integer. An \textbf{action} of $G$ on a set $X$ is a group homomorphism from $G$ into the symmetry group of $X$. It is \textbf{faithful} if this homomorphism is injective, i.e. one can identify $G$ as a subgroup of $Sym(X)$. It is called \textbf{$n$-transitive} (resp. \textbf{$n$-sharply transitive}) if $|X|\geq n$ and for any two $n$-tuples of distinct points in $X$, say 
$\bar{\alpha}=(\alpha_1,\ldots,\alpha_n)$ and $\bar{\beta}=(\beta_1,\ldots,\beta_n)$, there exists an (resp. a unique) element $g\in G$ such that $g\bar{\alpha}=\bar{\beta}$, i.e. $g\alpha_i=\beta_i$ for all $1\leq i\leq n$. It is \textbf{highly transitive} if it is $n$-transitive for all $n\geq 1$.
\end{definition}
Let us record some easy facts concerning the above definition.
\begin{itemize}
\item[(1)] If $|X|=\infty$ and $n\geq 1$, then any $(n+1)$-transitive action is not $n$-sharply transitive.
\item[(2)] It is easy to check that an infinite group $G$ is I.C.C. if it admits a faithful, 2-transitive action, see e.g. \cite[Lemma 4.2]{ho}. 
\item[(3)] For a faithful, 2-transitive action, the stabilizer subgroup of any point is a maximal subgroup (as the cardinality of the double coset space is two) and does not contain any nontrivial normal subgroups of the ambient group $G$.
\end{itemize}

Now, we are ready to prove Theorem \ref{thm: faithful 4-transitive actions}.
\begin{proof}[Proof of Theorem \ref{thm: faithful 4-transitive actions}]
Let $P$ be any intermediate von Neumann subalgebra between $LH$ and $LG$. Let $E: LG\twoheadrightarrow P$ be the conditional expectation. Fix any $g\in G\setminus H$, then clearly
$u_g^*E(u_g)\in L(g^{-1}Hg\cap H)'\cap LG$.

Note that $g^{-1}Hg\cap H=Stab(x)\cap Stab(g^{-1}x)$. As $g\not\in H$, we know $g^{-1}x\neq x$. 
Since the action is 2-transitive, $G=H\sqcup HgH$ holds. 

Denote by $\sigma$ the involution on $X$ such that $\sigma(x)=g^{-1}x$, $\sigma(g^{-1}x)=x$ and 
$\sigma(z)=z$ for all $z\in X\setminus \{x, g^{-1}x\}$.

\textbf{Claim}. If $\sigma\in G$, then $L(g^{-1}Hg\cap H)'\cap LG\subseteq L(\{id, \sigma\})$; otherwise, $L(g^{-1}Hg\cap H)'\cap LG=\mathbb{C}$.

\begin{proof}[Proof of the Claim]
We need to show that if $G\ni \tau\not\in \{id, \sigma\}$ (or $id\neq \tau\in G$ if $\sigma\not\in G$), then 
\[\sharp\{t\tau t^{-1}: t\in g^{-1}Hg\cap H\}=\infty.\]

If $id\neq \tau\in g^{-1}Hg\cap H$, then there exists some $y\in X\setminus \{x, g^{-1}x\}$ such that $y\neq \tau(y)$  as the action is faithful. Now take any infinite sequence $\{y_i\}\subseteq X\setminus\{x, g^{-1}x, y\}$. There exists some $t_n\in g^{-1}Hg\cap H$ such that $t_n(x, g^{-1}x, y, \tau(y))=(x, g^{-1}x, y, y_n)$ for each $n$ as the action is 4-transitive and $X$ is infinite. Then as $t_n(\tau(y))\neq t_m(\tau(y))$, we deduce $t_m^{-1}t_n\tau(t_m^{-1}t_n)^{-1}y\neq \tau(y)$ and hence $t_n\tau t_n^{-1}\neq t_m\tau t_m^{-1}$ if $n\neq m$.

Now we assume $\tau\not\in g^{-1}Hg\cap H$ and split the proof into three cases depending on $\sharp(\tau\{x, g^{-1}x\}\cap \{x, g^{-1}x\})=0$, 1 or 2, where $\tau\{x, g^{-1}x\}:=\{\tau(x), \tau(g^{-1}x)\}$.

\textbf{Case 1}. $\sharp(\tau\{x, g^{-1}x\}\cap \{x, g^{-1}x\})=0$.

Now, $y:=\tau(x)\not\in \{x, g^{-1}x\}$. Take any 
infinite sequence $\{y_i\}\subseteq X\setminus\{x, g^{-1}x, y\}$, there exists some $t_n\in G$ such that $t_n(x, g^{-1}x, y)=(x, g^{-1}x, y_n)$ for each $n$ as the action is 3-transitive and $X$ is infinite. Then one can check as before that $t_n\tau t_n^{-1}\neq t_m\tau t_m^{-1}$ if $n\neq m$. 

\textbf{Case 2}. $\sharp(\tau\{x, g^{-1}x\}\cap \{x, g^{-1}x\})=1$.

Without loss of generality, we may assume $\tau(x)=x$ or $\tau(x)=g^{-1}x$; otherwise, $\tau(x)\not\in \{x, g^{-1}x\}$ and we can apply the construction in Case 1.

If $\tau(x)=x$ or $\tau(x)=g^{-1}x$, then $z:=\tau(g^{-1}x)\not\in\{ x, g^{-1}x\}$. Now, take any infinite sequence $\{y_n\}\subset X\setminus\{x, g^{-1}x\}$ and find $t_n\in G$ such that  $t_n(x, g^{-1}x, z)=(x, g^{-1}x, y_n)$ for each $n$. Clearly, $t_n\tau t_n^{-1}\neq t_m\tau t_m^{-1}$ if $n\neq m$ as 
$t_m^{-1}t_n\tau (t_m^{-1}t_n)^{-1}(g^{-1}x)\neq \tau (g^{-1}x)$. Obviously, $t_n\in g^{-1}Hg\cap H$.

\textbf{Case 3}. $\sharp(\tau\{x, g^{-1}x\}\cap \{x, g^{-1}x\})=2$.

As $\tau\not\in g^{-1}Hg\cap H$, we know that $\tau(x)=g^{-1}x$ and $\tau(g^{-1}x)=x$.

We split the proof into two subcases according to whether $G$ contains the involution $\sigma$ as in the claim.

\textbf{Subcase 1}. $\sigma\not\in G$.

This implies there exists some $y\in X\setminus \{x, g^{-1}x\}$ such that $\tau(y)\neq y$. Take any infinite sequence $\{y_n\}\subset X\setminus \{x, g^{-1}x, y\}$. We can find $t_n\in G$ such that $t_n(x, g^{-1}x, y, \tau(y))=(x, g^{-1}x, y, y_n)$ for each $n$. Then $t_n\tau t_n^{-1}\neq t_m\tau t_m^{-1}$ if $n\neq m$.

\textbf{Subcase 2}. $\sigma\in G$.

By assumption, $id\neq \tau\neq \sigma$. Define $\tau':=\sigma\tau\sigma^{-1}$. Note that $\tau\sigma^{-1}\in g^{-1}Hg\cap H$ and $(\tau\sigma^{-1})^{-1}\tau(\tau\sigma^{-1})=\tau'$.  
Moreover, $\tau'\neq \sigma$ while $\tau'(x)=\sigma(x)=g^{-1}x$ and $\tau'(g^{-1}x)=\sigma(g^{-1}x)=x$, this implies there exists some $y\in X\setminus \{x, g^{-1}x\}$ such that $\tau'(y)\neq y=\sigma(y)$ as the action is faithful. Now, we can replace $\tau$ by $\tau'$ and run the same argument as in Subcase 1.
\end{proof}
Now, we use the above claim to finish the proof as follows.

If $\sigma\not\in G$, then $u_g^*E(u_g)\in \mathbb{C}1$ and we can follow the proof of Lemma \ref{lem: 2nd approach} to deduce $P=LH$ or $LG$.

If $\sigma\in G$, then the claim tells us that $u_g^{-1}E(u_g)\in L(\{e, \sigma\})=\mathbb{C}1+\mathbb{C}u_{\sigma}$. Now, observe that $G=H\sqcup HgH=H\sqcup H\sigma H$ and $g^{-1}x=\sigma^{-1}x$, so we can actually replace $g$ by $\sigma$ in the proof of the Claim. Therefore, we also get $u_{\sigma}^{-1}E(u_{\sigma})\in \mathbb{C}1+\mathbb{C}u_{\sigma}$.

Next, write $E(u_{\sigma})=u_{\sigma}(a+bu_{\sigma})$ for some $a, b\in \mathbb{C}$, i.e. $E(u_{\sigma})=au_{\sigma}+b1$.
Take trace on both sides, we get $b=0$, hence $au_{\sigma}=E(u_{\sigma})=E(E(u_{\sigma}))=a^2u_{\sigma}$, i.e. $a=0$ or $1$. So $P=LH$ or $LG$.
\end{proof}


Using Theorem \ref{thm: faithful 4-transitive actions}, we can prove Corollary \ref{cor: acylindricially hyperbolic and t.full groups}.

\begin{proof}[Proof of Corollary \ref{cor: acylindricially hyperbolic and t.full groups}]
It suffices to check any $G$ in the corollary admits a faithful, highly transitive action. For the first class of groups, this is due to Hull-Osin \cite[Theorem 1.2]{ho}. For the second class of groups, we can simply take the infinite set to be any (infinite) orbit (in the Cantor set) and consider the action of the topological full group on this orbit.
\end{proof}

We record several remarks on Theorem \ref{thm: faithful 4-transitive actions} and Corollary \ref{cor: acylindricially hyperbolic and t.full groups}.

\begin{remark}
(1) Corollary \ref{cor: acylindricially hyperbolic and t.full groups} covers many groups. Indeed, for the first class of groups,  Hull-Osin's Theorem 1.2 in \cite{ho} generalizes a long list of previous results (see \cite{ho} for explicit discussion on this), notably about finitely generated free groups \cite{mcd, dix, ols_highly transitive}, surface groups \cite{kit}, hyperbolic groups \cite{cha}, free products \cite{fms, gun, hic, ms} or outer automorphism groups for free groups \cite{gg_highly transitive}. Some groups in the second class can be finitely generated and simple \cite{matui, nek_2}, including some which are amenable \cite{jm} or have intermediate growth \cite{nek_1}.

(2) It is not hard to check for a faithful, 3-transitive action on an infinite set, the stabilizer subgroup of any point is relative I.C.C. in the ambient group, but it is not clear whether Theorem \ref{thm: faithful 4-transitive actions} still holds if we weaken the assumption to having a faithful, 3-transitive action (e.g. those in \cite{lm}). It does hold for the following concrete example. Let $G=PSL_2(\mathbb{Q})$, consider the well-known faithful, 3-transitive action $G\curvearrowright X:=\mathbb{P}(\mathbb{Q}^2)$ (the projective line) (\cite[Example 1.16]{ggs}) and set $x=[1, 0]\in X$. Note that the Claim in the above proof fails for this example. Nevertheless, one can still argue that $L(Stab(x))$ is maximal in $L(PSL_2(\mathbb{Q}))$. See Proposition \ref{prop: maximal for PSL_2(Q)} below for details. Moreover, one can check easily that for the faithful, 3-transitive action as mentioned in \cite[P. 347]{ho} (i.e. the affine action $V\rtimes A\curvearrowright V$, where $V:=\oplus_{\mathbb{Z}}\mathbb{Z}/2\mathbb{Z}$ and $A$ is the group of automorphims of $V$ with finite supports), the Claim in the above proof still holds, so for this 3-transitive action, the stabilizer subgroups still generate maximal von Neumann subalgebras.

(3) Things get even ``worse" if we only assume the action is faithful, 2-transitive. Indeed, for a sharply 2-transitive action (see \cite{gg_sharply 2 transitive, rst, rt} for examples), $g^{-1}Hg\cap H$ is always trivial for any $g\not\in H$. So the above proof does not work. In fact, $LH$ is not maximal for the faithful, sharply 2-transitive (affine) action $(\mathbb{Q}, +)\rtimes \mathbb{Q}^{\times}\curvearrowright (\mathbb{Q}, +)$, where $H:=Stab(0)=\mathbb{Q}^{\times}$. 

For a general faithful, 2-transitive but not necessarily sharply 2-transitive action, stabilizer subgroups of points can still fail to generate maximal von Neumann subalgebras. For example, let $G$ be an infinite (necessarily simple) group with exactly two conjugacy classes \cite[Exercise 11.78]{gtm148}, then the left-right shift action $G\times G\curvearrowright G$ is 2-transitive (but never sharply 2-transitive), and as shown before, for the diagonal subgroup $\Delta(G)=Stab(e)$, $L(\Delta(G))$ is not maximal inside $L(G\times G)$.  
  
For certain faithful, 2-transitive (e.g. some in \cite[Theorem 1.10]{ggs}) but not sharply 2-transitive actions, it may still be possible to prove maximality for von Neumann subalgebras generated by the stabilizer subgroups, or at least completely determine all von Neumann subalgebras containing the group von Neumann subalgebra of any stabilizer subgroup. 

(4) There are groups admitting no (or not clear whether admitting) faithful, 4-transitive actions, see \cite{lm} and reference therein. We believe many known maximal subgroups with infinite index for these groups  (e.g. \cite{fg, gs_F, sav_1, sav_2}) still generate maximal von Neumann subalgebras.

It is still open whether $SL_n(\mathbb{Z})$ for $n\geq 3$ admits a faithful, highly transitive action (\cite{gg}, \cite[Question 7.8]{gm}). We do not know whether any maximal subgroups with infinite index in these groups (e.g. those in \cite{ms_1, ms_2, ms_3, gg, gm}) still generate maximal von Neumann subalgebras. 

(5) From the first or second proof of Theorem \ref{thm: solve ge's Q} and Corollary \ref{cor: acylindricially hyperbolic and t.full groups},  we deduce that there exist two maximal subgroups with infinite index of $F_2$, say $H_1$ and $H_2$, satisfying the following conditions.
\begin{itemize}
\item Both $LH_1$ and $LH_2$ are maximal in $LF_2$.
\item $H_1$ contains an infinite normal subgroup of $F_2$, but $H_2$ contains no nontrivial normal subgroups of $F_2$.
\end{itemize}
It is natural to ask whether there exists a $*$-automorphism $\phi$ on $LF_2$ such that $\phi(LH_1)=LH_2$. As explained below, the general answer is no. 

To see this, it suffices to argue that we can further assume $|H_1\backslash F_2/H_1|=\infty$ and $|H_2\backslash F_2/H_2|<\infty$. Once we know that, it would imply that $\ell^2F_2$ has distinct bimodule structures w.r.t. $LH_1$ and $LH_2$ and hence no such $\phi$ exists.

First, any $H_2$ obtained in Corollary \ref{cor: acylindricially hyperbolic and t.full groups} satisfies $|H_2\backslash F_2/H_2|=2$. Then, recall that we can take $H_1=\pi^{-1}(K)$, where $\pi: F_2\twoheadrightarrow \Gamma$ denotes a surjective but non-injective homomorphism and $K$ is any maximal subgroup with infinite index in $\Gamma$. Since $|K\backslash \Gamma/K|\leq |H_1\backslash F_2/H_1|$, it suffices to argue we can assume $|K\backslash \Gamma/K|=\infty$.  For this, one can  take $\Gamma$ to be an infinite Tarski monster group, which is a 2-generated simple group with all proper subgroups being finite \cite{ol_1, ol_2}.

Therefore, we have shown there exist at least two different maximal subfactors $LF_{\infty}$ inside $LF_2$.
\end{remark}

\section{Concluding remarks}\label{section: remarks}

In this section, we record some further remarks related to this work.

\subsection{Maximality v.s. rigidity for subgroup von Neumann algebras}

One may have noticed that to disprove that a von Neumann subalgebra $N$ is maximal in $M$, it suffices to find some $\phi\in Aut(M)$ such that $N\lneq Fix(\phi)\lneq M$. This strategy is applied above to show $L(\Delta(G))$ is not maximal inside $L(G\times G)$ for any nontrivial simple group $G$. Another example is shown in Proposition \ref{prop: upper trangular matices is not maximal}.

It is natural to ask whether the above strategy always works for group von Neumann algebras $LH<LG$, e.g. is it true that $LH$ is maximal in $LG$ (under the inclusion) if $LH$ is a rigid subalgebra in $LG$? 

Here, for von Neumann algebras $N<M$, $N$ is \textbf{rigid} in $M$ (in the sense of Longo \cite{longo}) if for any $\phi\in Aut(M)$, $\phi|_{N}=id$ implies $\phi=id$. See \cite{longo} for more on this notion. Note that this defintion differs from the one in \cite{hamana, yuhei_19}, where (normal) c.p. maps are considered instead of automorphisms.

For the above question, one can easily construct counterexamples using the following proposition.

\begin{proposition}\label{prop: observation to show rigidity does not imply maximality}
Let $G$ be an I.C.C. group with $H<G$ being a normal and relative I.C.C. subgroup. Then $LH$ is rigid in $LG$ if the abelianization $(G/H)_{ab}$ is trivial.
\end{proposition}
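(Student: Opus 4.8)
The plan is to show that any $\phi\in\mathrm{Aut}(LG)$ fixing $LH$ pointwise is the identity, using the relative I.C.C.\ condition to pin down the image of each group unitary $u_g$. First I would record the standard consequence of relative I.C.C.: since $\{khk^{-1}:k\in H\}$ is infinite for every $h\in H\setminus\{e\}$, we have $LH'\cap LG=\mathbb C1$, so in particular $H<G$ is I.C.C.\ and $LH$ is a factor. Now fix $\phi$ with $\phi|_{LH}=\mathrm{id}$ and fix $g\in G$. For every $h\in H$ one has $\phi(u_g)u_h = \phi(u_g u_{g^{-1}hg}) = \phi(u_{g^{-1}hg})\,\phi(u_g)$; since $H$ is normal, $g^{-1}hg\in H$ and $\phi$ fixes $u_{g^{-1}hg}$, so $\phi(u_g)u_h = u_{g^{-1}hg}\,\phi(u_g) = u_g^{-1} u_h u_g\,\phi(u_g)$ after rewriting; rearranging, $u_g\phi(u_g)$ commutes with every $u_h$, $h\in H$. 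Hence $u_g^{*}\phi(u_g)\in LH'\cap LG=\mathbb C1$, so $\phi(u_g)=\chi(g)u_g$ for a scalar $\chi(g)$ of modulus one.

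Next I would check that $\chi\colon G\to\mathbb T$ is a group homomorphism: applying $\phi$ to $u_gu_{g'}=u_{gg'}$ gives $\chi(g)\chi(g')=\chi(gg')$, and $\chi(e)=1$ since $\phi(1)=1$. Because $\mathbb T$ is abelian, $\chi$ factors through the abelianization $G_{ab}=G/[G,G]$. Moreover $\chi$ is trivial on $H$, since $\phi$ fixes each $u_h$; so $\chi$ factors through $G/(H[G,G])$. The hypothesis is that the abelianization of $G/H$ is trivial, i.e.\ $G/H = (G/H)_{ab}\cdot(\text{something})$... more precisely $(G/H)_{ab}=\{1\}$ means $G = H[G,G]$, equivalently $G/(H[G,G])$ is trivial. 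Therefore $\chi\equiv 1$, so $\phi(u_g)=u_g$ for all $g\in G$, and since the $u_g$ generate $LG$ as a von Neumann algebra and $\phi$ is normal, $\phi=\mathrm{id}$. This shows $LH$ is rigid in $LG$ in Longo's sense.

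The only genuinely delicate point is the commutation computation showing $u_g^{*}\phi(u_g)\in LH'\cap LG$; one must be careful that $\phi$ is only assumed to fix $LH$ pointwise (not to be inner or to commute with the crossed-product structure), so the argument must use \emph{solely} that $\phi$ is a normal $*$-automorphism, $\phi|_{LH}=\mathrm{id}$, and the relation $u_gu_hu_g^{-1}=u_{ghg^{-1}}\in\mathcal U(LH)$ coming from normality of $H$. Everything else — that $\chi$ is a character, that it kills $[G,G]$ and $H$, and that the triviality of $(G/H)_{ab}$ forces $\chi\equiv 1$ — is routine. I would also remark that relative I.C.C.\ of $H$ in $G$ is exactly what makes the relative commutant $LH'\cap LG$ trivial, which is the step that converts the normality relation into a scalar.
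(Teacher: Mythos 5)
Your proposal follows essentially the same route as the paper's proof: use normality of $H$ and $\phi|_{LH}=\mathrm{id}$ to show $u_g^*\phi(u_g)\in LH'\cap LG=\mathbb{C}1$, read off a character $\chi\colon G\to\mathbb{T}$ that is trivial on $H$, and then use that $(G/H)_{ab}$ trivial means $G=H[G,G]$ to force $\chi\equiv 1$, hence $\phi=\mathrm{id}$ by normality. Two local slips should be repaired, though neither affects the strategy. First, the condition you cite, namely that $\{khk^{-1}:k\in H\}$ is infinite for every $h\in H\setminus\{e\}$, only makes $LH$ a factor and does \emph{not} give $LH'\cap LG=\mathbb{C}1$ (take $G=H\times\mathbb{Z}$ with $H$ I.C.C.); what is needed, and what the paper's relative I.C.C. hypothesis actually states, is that $\{kgk^{-1}:k\in H\}$ is infinite for every nontrivial $g\in G$. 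Second, your displayed commutation chain is garbled: $\phi(u_g)u_h=\phi(u_gu_{g^{-1}hg})$ is false as written (the left side should be $u_h\phi(u_g)=\phi(u_hu_g)$), and the multiplicativity of $\phi$ is applied in the wrong order. The clean one-line version, which is the paper's, is that $ghg^{-1}\in H$ gives $\phi(u_g)u_h\phi(u_g)^*=\phi(u_gu_hu_g^*)=u_gu_hu_g^*$, which rearranges to $u_g^*\phi(u_g)\in LH'\cap LG=\mathbb{C}1$, exactly the conclusion you then use. With these corrections your argument is complete and coincides with the paper's.
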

\begin{proof}
Take any $\phi\in Aut(LG)$ and $\phi|_{LH}=id$. For any $g\in G$ and $h\in H$, $\phi(u_{ghg^{-1}})=u_{ghg^{-1}}$. This implies that $u_g^*\phi(u_g)\in LH'\cap LG=\mathbb{C}1$, i.e. $\phi(u_g)=u_g\lambda_g$ for some group homomorphism $\lambda: G\to \mathbb{T}$. As $\lambda|_H=1$ and $\mathbb{T}$ is abelian, we deduce $\lambda$ factors through $(G/H)_{ab}$, hence $\lambda\equiv 1$ and $\phi=id$.
\end{proof}
Now, one can take $H=ker(F_n\twoheadrightarrow SL_3(\mathbb{Z}))$ and $G=F_n$ for some finite $n$. By Proposition \ref{prop: observation to show rigidity does not imply maximality}, it is clear that $LH$ is rigid. It is obviously not maximal in $LG$.
However, we do not know any examples of $H\leq G$ such that $LH$ is maximal but not rigid in $LG$. Below, we prove that the maximal von Neumann subalgebras which appeared in Theorem \ref{thm: faithful 4-transitive actions} and Theorem \ref{thm: general version of i.c.c. hyperbolic groups} are also rigid.



\begin{proposition}\label{prop: maximal subgroups in 4-transitve actions case give rise to rigid subalgebras}
Let $G$ and $H$ be the ambient groups and their maximal subgroups respectively in Theorem \ref{thm: faithful 4-transitive actions}. Then $LH$ is rigid in $LG$.
\end{proposition}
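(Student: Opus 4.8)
The plan is to feed the Claim established inside the proof of Theorem \ref{thm: faithful 4-transitive actions} into a character argument modelled on Proposition \ref{prop: observation to show rigidity does not imply maximality}. The obstacle compared with that proposition is that the point stabilizer $H$ is not normal, so for $\phi\in Aut(LG)$ fixing $LH$ pointwise one cannot conjugate $u_h$ back into $LH$ directly; instead I fix a single $g\in G\setminus H$ and conjugate by it, which keeps things inside $L(g^{-1}Hg\cap H)'\cap LG$ --- and this relative commutant is exactly what the Claim computes.

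In detail, let $\phi\in Aut(LG)$ with $\phi|_{LH}=\mathrm{id}$. Since a faithful $2$-transitive action forces $G$ to be I.C.C., $LG$ is a II$_1$ factor and $\phi$ preserves its unique trace $\tau$ (this is used below). For $g\in H$ there is nothing to prove. For $g\in G\setminus H$ and any $h\in g^{-1}Hg\cap H$ both $u_h$ and $u_{ghg^{-1}}$ lie in $LH$, so $\phi(u_g)u_h\phi(u_g)^*=\phi(u_gu_hu_g^*)=u_{ghg^{-1}}=u_gu_hu_g^*$, i.e. $u_g^*\phi(u_g)$ commutes with every such $u_h$ and hence with $L(g^{-1}Hg\cap H)$. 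By the Claim in the proof of Theorem \ref{thm: faithful 4-transitive actions}, with $\sigma_g\in Sym(X)$ the transposition interchanging $x$ and $g^{-1}x$: either $\sigma_g\notin G$, in which case $u_g^*\phi(u_g)\in\mathbb{C}1$ and $\phi(u_g)=\lambda_g u_g$ for some $\lambda_g\in\mathbb{T}$; or $\sigma_g\in G$, in which case $u_g^*\phi(u_g)\in\mathbb{C}1+\mathbb{C}u_{\sigma_g}$.

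The case $\sigma_g\in G$ is the only delicate point, and I would resolve it as in the proof of Theorem \ref{thm: faithful 4-transitive actions}. Since $G=H\sqcup H\sigma_g H$ and $\sigma_g^{-1}x=g^{-1}x$, the same commutation computation applied to $\sigma_g$ gives $u_{\sigma_g}^*\phi(u_{\sigma_g})\in\mathbb{C}1+\mathbb{C}u_{\sigma_g}$; writing $\phi(u_{\sigma_g})=cu_{\sigma_g}+d$ and applying $\tau$ (using $\tau(u_{\sigma_g})=0$) forces $d=0$, whereupon $1=\phi(u_{\sigma_g})^2=c^2$ forces $c\in\{\pm1\}$. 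Since $g\sigma_g$ fixes $x$, that is $g\sigma_g\in H$, and $g=(g\sigma_g)\sigma_g$, I get $\phi(u_g)=u_{g\sigma_g}\,cu_{\sigma_g}=cu_g$; so again $\phi(u_g)=\lambda_g u_g$ with $\lambda_g=c\in\{\pm1\}\subset\mathbb{T}$. Thus $\phi(u_g)=\lambda_g u_g$ for all $g\in G$, with $\lambda_g=1$ for $g\in H$.

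To finish, multiplicativity of $\phi$ shows $\lambda\colon G\to\mathbb{T}$ is a homomorphism; being valued in an abelian group and trivial on $H$, it is trivial on the normal closure $\langle\!\langle H\rangle\!\rangle$ of $H$. As $H$ is a maximal subgroup of the infinite group $G$ it is nontrivial, and as $\bigcap_{g\in G}g^{-1}Hg$ is the kernel of the (faithful) action it is trivial, so $H$ contains no nontrivial normal subgroup of $G$ and in particular is not normal; hence the normal subgroup $\langle\!\langle H\rangle\!\rangle\supseteq H$ is strictly larger than $H$, and maximality gives $\langle\!\langle H\rangle\!\rangle=G$. Therefore $\lambda\equiv1$, $\phi(u_g)=u_g$ for all $g$, and $\phi=\mathrm{id}$, i.e. $LH$ is rigid in $LG$. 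The one place real work is needed is the case $\sigma_g\in G$: a priori $u_g^*\phi(u_g)$ could be a genuine combination of $1$ and $u_{\sigma_g}$, and it is only the trace normalization together with $\sigma_g^2=e$ that collapses it to a sign.
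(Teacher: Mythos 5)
Your argument is correct, and its first half is exactly the paper's: conjugating $\phi(u_g)$ by $L(g^{-1}Hg\cap H)$ and invoking the Claim from the proof of Theorem \ref{thm: faithful 4-transitive actions}, then passing to $u_{\sigma_g}$ when the transposition lies in $G$ (using $\sigma_g^{-1}x=g^{-1}x$, trace preservation and $\sigma_g^2=e$ to get $\phi(u_{\sigma_g})=\pm u_{\sigma_g}$, and $g\sigma_g\in H$ to transfer the sign back to $u_g$). Where you diverge is the elimination of the scalars: the paper argues locally, for each $g$ (resp. $\sigma$), that $[G:H]=\infty$ together with $G=H\sqcup HgH$ forces some $h\in H$ with $ghg\in HgH$, and the relation $ghg=h_1gh_2$ then gives $\lambda_g^2=\lambda_g$, hence $\lambda_g=1$; you instead assemble all the scalars into a character $\lambda\colon G\to\mathbb{T}$ trivial on $H$ and kill it globally by noting that faithfulness makes $\bigcap_g gHg^{-1}$ trivial, so the nontrivial maximal subgroup $H$ is not normal and its normal closure is $G$. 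Your finishing move is in fact the same mechanism the paper uses for Proposition \ref{prop: maximal subgroups in thm 2 give rise to rigid subalgebras} (there via $[G,G]\not\leq H$), so it fits naturally; it trades the paper's purely double-coset computation for the extra (freely available) input of faithfulness, and both routes are complete and correct.
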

\begin{proof}
Let $\phi\in Aut(LG)$ and $\phi|_{LH}=id$. We want to show $\phi=id$. 

Fix any $g\in G\setminus H$. For any $h\in g^{-1}Hg\cap H$, since $ghg^{-1}\in H$, we get $\phi(u_g)u_h\phi(u_g)^{-1}=\phi(u_gu_hu_{g^{-1}})=u_gu_hu_{g^{-1}}$, i.e.
$u_g^{-1}\phi(u_g)\in L(g^{-1}Hg\cap H)'\cap LG$.
Recall that $\sigma$ denotes the involution on $X$ which swaps $x$ and $g^{-1}x$.

\textbf{Case 1}. The involution $\sigma\not\in G$.

By the proof of Theorem \ref{thm: faithful 4-transitive actions}, we know $\phi(u_g)=\lambda_gu_g$ for some $\lambda_g\in \mathbb{C}$. Clearly, $\lambda_g\in \mathbb{T}$. Then observe that $G=H\sqcup HgH$ implies that there exists some $h\in H$ such that $ghg\in HgH$. Assume not, then $gHg\subseteq H$ and therefore $G=gGg=gHg\sqcup gHgHg\subseteq H\cup HHg=H\sqcup Hg$, contradicting $[G:H]=\infty$. Now, write $ghg=h_1gh_2$. Apply $\phi$ to both sides, we deduce that $\lambda_g^2=\lambda_g\in\mathbb{T}$, hence $\lambda_g=1$.
Clearly, this implies that $\phi=id$.

\textbf{Case 2}. $\sigma\in G$.

In this case, we have $u_{\sigma}^{-1}\phi(u_{\sigma})\in \mathbb{C}1+\mathbb{C}u_{\sigma}$.
Write $\phi(u_{\sigma})=u_{\sigma}(a+bu_{\sigma})$. From $1=\phi(u_{\sigma}^2)=\phi(u_{\sigma})\phi(u_{\sigma})$, it is not hard to deduce that $\phi(u_{\sigma})=\pm u_{\sigma}$. Use again the observation that there exists some $h\in H$ such that $\sigma h\sigma\in H\sigma H$ to deduce that we must have $\phi(u_{\sigma})=u_{\sigma}$. Hence, $\phi=id$.
\end{proof}

\begin{proposition}\label{prop: maximal subgroups in thm 2 give rise to rigid subalgebras}
Let $G$ and $H$ be the ambient groups and their maximal subgroups respectively in Theorem \ref{thm: general version of i.c.c. hyperbolic groups}. Then $LH$ is rigid in $LG$.
\end{proposition}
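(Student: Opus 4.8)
The plan is to mimic the proof of Proposition~\ref{prop: observation to show rigidity does not imply maximality}, taking advantage of the concrete description of $H$ supplied by the proof of Theorem~\ref{thm: general version of i.c.c. hyperbolic groups}: there $H=\pi^{-1}(K)$ for a surjection $\pi\colon G\twoheadrightarrow\Gamma$ onto a finitely presented group with no proper finite index subgroups, $K<\Gamma$ is a maximal subgroup of infinite index, and the kernel $K_0:=\ker\pi$ is an infinite normal subgroup of $G$ which is relative I.C.C. in $G$ (this last point is exactly condition~(2) verified in that proof). The key structural fact is thus that $H$ contains the infinite, normal, relative-I.C.C. subgroup $K_0$.

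First I would fix $\phi\in Aut(LG)$ with $\phi|_{LH}=id$; in particular $\phi$ restricts to the identity on $LK_0$. Since $K_0\lhd G$, for every $g\in G$ and $h\in K_0$ one has $\phi(u_g)u_h\phi(u_g)^{-1}=\phi(u_gu_hu_g^{-1})=u_gu_hu_g^{-1}$, so $u_g^{-1}\phi(u_g)\in LK_0'\cap LG$, which is $\mathbb{C}1$ by the relative I.C.C. property of $K_0$. Hence $\phi(u_g)=\lambda_gu_g$ for a character $\lambda\colon G\to\mathbb{T}$, and $\lambda|_H\equiv 1$ because $\phi$ fixes $LH$ pointwise. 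It remains to see that any such $\lambda$ is trivial.

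Since $\mathbb{T}$ is abelian, $\lambda$ factors through $G/[G,G]H$; and since $[G,G]H\supseteq H\supseteq K_0=\ker\pi$, applying $\pi$ identifies this quotient with $\Gamma/[\Gamma,\Gamma]K$ (using $\pi([G,G])=[\Gamma,\Gamma]$ and $\pi(H)=K$). Now $[\Gamma,\Gamma]K$ is a subgroup of $\Gamma$ containing the maximal subgroup $K$, so it equals $K$ or $\Gamma$. If it equalled $K$, then $[\Gamma,\Gamma]\subseteq K$, so $K\lhd\Gamma$ with $\Gamma/K$ simple and abelian, hence $\Gamma/K\cong\mathbb{Z}/p$ is finite, contradicting $[\Gamma:K]=\infty$ (equivalently, contradicting that $\Gamma$ has no proper finite index subgroup). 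Therefore $[\Gamma,\Gamma]K=\Gamma$, the quotient $G/[G,G]H$ is trivial, $\lambda\equiv 1$, and $\phi=id$, so $LH$ is rigid in $LG$.

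I do not foresee a genuine obstacle here: the only inputs are that $K_0$ is relative I.C.C. in $G$ — already established inside the proof of Theorem~\ref{thm: general version of i.c.c. hyperbolic groups} via north-south dynamics for loxodromic elements of the infinite normal subgroup $K_0$ — together with the elementary observation that a maximal subgroup of infinite index cannot contain the commutator subgroup. The argument is in fact a mild generalization of Proposition~\ref{prop: observation to show rigidity does not imply maximality}, replacing the hypothesis that $(G/H)_{ab}$ is trivial by the weaker requirement $H\cdot[G,G]=G$, which is forced in this setting by the maximality and infinite index of $K$.
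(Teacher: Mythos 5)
Your proof is correct and follows essentially the same route as the paper: both use that $H$ contains the infinite normal, relative I.C.C.\ subgroup $\ker\pi$ to get $u_g^*\phi(u_g)\in L(\ker\pi)'\cap LG=\mathbb{C}1$, hence a character $\lambda\colon G\to\mathbb{T}$ trivial on $H$, and then kill $\lambda$ by the elementary fact that a maximal subgroup of infinite index cannot contain the commutator subgroup. The only cosmetic difference is that you verify $[G,G]H=G$ after pushing down to $\Gamma$ via $\pi$, whereas the paper argues directly in $G$ (noting that a maximal subgroup of an abelian group has finite index) and then uses $G=\langle H,g\rangle$ for $g\in[G,G]\setminus H$.
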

\begin{proof}
From the proof of Theorem \ref{thm: general version of i.c.c. hyperbolic groups}, we know $H$ contains an infinite normal subgroup of $G$, denoted by $K$, such that $K<G$ is relative I.C.C.. Let $\phi\in Aut(LG)$ satisfy $\phi|_{LH}=id$.

For all $k\in K$ and $g\in G$, since $gkg^{-1}\in H$, we deduce that $\phi(u_g)u_k\phi(u_g)^*=u_gu_ku_g^*$. Hence, 
$u_g^*\phi(u_g)\in LK'\cap LG=\mathbb{C}1$. Now, we can write $\phi(u_g)=u_g\lambda(g)$ for some group homomorphism 
$\lambda: G\to \mathbb{T}$ satisfying that $\lambda|_H=1$. Our goal is to show $\lambda\equiv 1$.

We claim that the commutator subgroup $[G, G]\not\leq H$.
Once we know that, we can take any $g\in [G, G]\setminus H$. Since $G=\langle H, g \rangle$ and $\lambda(g)=1$, we deduce that $\lambda\equiv 1$. 

To prove the claim, assume $[G, G]\leq  H$ instead. Then as $[G: H]=\infty$ and $H$ is maximal, we deduce that $H/[G, G]$ is a maximal subgroup inside the abelian group $G/[G, G]$ with infinite index, which is absurd since it is easy to check every maximal subgroup in an abelian group has finite index (using the fact that every subgroup in an abelian group is normal).
\end{proof}

\subsection{Two more examples}
In this subsection, we present two more examples that do not seem to fit into our theorems. Both of them can be handled by ad. hoc. approaches.
\begin{proposition}\label{prop: sl_2z is maximal in sl_2(z[1/p])}
Let $H=SL_2(\mathbb{Z})$ and $G=SL_2(\mathbb{Z}[\frac{1}{p}])$, where $p$ is a prime number. Then $L\bar{H}$ is both maximal and rigid in $L\bar{G}$, where $\bar{H}=PSL_2(\mathbb{Z})$ and $\bar{G}=PSL_2(\mathbb{Z}[\frac{1}{p}])$.
\end{proposition}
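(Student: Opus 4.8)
The plan is to prove both assertions by reducing to a statement about conditional expectations and automorphisms, exactly as in the earlier sections, but now using the specific structure of $\bar G = PSL_2(\mathbb{Z}[\frac1p])$ as a group acting on its Bruhat--Tits tree. The key point is that $\bar G$ decomposes as an amalgamated free product $\bar G = \bar H *_{\Gamma_0(p)} \bar H$ (where $\Gamma_0(p)$ is the congruence subgroup, viewed inside $PSL_2(\mathbb{Z})$), coming from the action of $SL_2(\mathbb{Z}[\frac1p])$ on the $(p+1)$-regular tree with one edge orbit and vertex stabilizers conjugate to $SL_2(\mathbb{Z})$. From this one reads off the double coset space $\bar H \backslash \bar G / \bar H$ (it is infinite, indexed by reduced words in the amalgam), and, more importantly, one gets a concrete normal form for elements of $\bar G$: every $g \notin \bar H$ is a reduced alternating product of coset representatives.

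First I would set up the ``local'' version of Lemma \ref{lem: 2nd approach} alluded to in the remark after it: for a fixed $g \in \bar G \setminus \bar H$ and any intermediate $P$ with $L\bar H < P < L\bar G$, one has $u_g^* E_P(u_g) \in L(g^{-1}\bar H g \cap \bar H)' \cap L\bar G$, so it suffices to show $g^{-1}\bar H g \cap \bar H$ is relative I.C.C. in $\bar G$ (i.e.\ $\{t\tau t^{-1} : t \in g^{-1}\bar H g \cap \bar H\}$ is infinite for every $1 \ne \tau \in \bar G$), which forces $u_g^* E_P(u_g) \in \mathbb{C}1$ and hence, running the usual argument ($J := \{g : u_g \in P\}$ is a subgroup, $L\bar H < LJ = P$, maximality of $\bar H$) gives $P = L\bar H$ or $L\bar G$. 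The computation of $g^{-1}\bar H g \cap \bar H$ I would do on the tree: if $v_0$ is the base vertex with stabilizer $\bar H$ and $g$ moves $v_0$ to $v_1$ at distance $n \ge 1$, then $g^{-1}\bar H g \cap \bar H$ is the pointwise stabilizer of the geodesic $[v_0, v_1]$, which is an infinite group (it contains deep enough congruence subgroups) and acts with unbounded orbits on the tree away from that geodesic; conjugating a nontrivial $\tau$ by elements of this group that fling a fixed vertex off to infinity produces infinitely many distinct conjugates, by the same ``move a witness point'' trick used throughout Section \ref{section: transitive actions}. The one case needing care is when $\tau$ itself fixes $[v_0,v_1]$ pointwise but is small — here I would use that $\tau$, being a nontrivial element of $PSL_2(\mathbb{Z}[\frac1p])$ of infinite order (the group is virtually free, torsion is controlled), cannot fix too large a subtree, so some vertex is genuinely moved and the trick applies again.

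For rigidity, I would argue as in Propositions \ref{prop: maximal subgroups in 4-transitve actions case give rise to rigid subalgebras} and \ref{prop: maximal subgroups in thm 2 give rise to rigid subalgebras}: take $\phi \in Aut(L\bar G)$ with $\phi|_{L\bar H} = id$; for $g \in \bar G$ and $h \in g^{-1}\bar H g \cap \bar H$ one gets $\phi(u_g) u_h \phi(u_g)^* = u_g u_h u_g^*$, so $u_g^* \phi(u_g) \in L(g^{-1}\bar H g \cap \bar H)' \cap L\bar G = \mathbb{C}1$ by the relative I.C.C. property just established; thus $\phi(u_g) = \lambda(g) u_g$ for a character $\lambda : \bar G \to \mathbb{T}$ with $\lambda|_{\bar H} = 1$. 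Since $\bar H$ is maximal and $\bar G = \langle \bar H, g\rangle$ for any $g \notin \bar H$, it suffices to kill $\lambda$ on one such $g$; here I would use that the abelianization of $PSL_2(\mathbb{Z}[\frac1p])$ is finite (indeed trivial for $p$ large, and in general a quotient of $\mathbb{Z}/12$), so $\lambda$ has finite image, and then exhibit, using the amalgam normal form, an $h \in \bar H$ and a relation of the shape $g h g = h_1 g h_2$ forcing $\lambda(g)^2 = \lambda(g)$, hence $\lambda(g) = 1$; alternatively one invokes Proposition \ref{prop: observation to show rigidity does not imply maximality} once one checks $g^{-1}\bar H g \cap \bar H$ is relative I.C.C.\ and that there is no room for a nontrivial character vanishing on $\bar H$.

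The main obstacle I anticipate is the relative I.C.C. verification for $g^{-1}\bar H g \cap \bar H$ in the single degenerate case — showing that this edge/geodesic stabilizer, though it can be ``thin'' (essentially a congruence subgroup fixing a segment), still acts on the rest of the tree with enough freedom to scatter conjugates of an arbitrary nontrivial $\tau$; making this precise requires a careful look at which elements of $SL_2(\mathbb{Z})$ fix specified vertices/edges of the tree and an argument that such stabilizers are never contained in any single $E(c)$-type small subgroup. Everything else is bookkeeping with the amalgamated free product structure and reuse of the expectation argument already developed in the paper.
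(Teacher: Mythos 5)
Your proposal is correct in outline, and its operator-algebraic skeleton is exactly the paper's: show $u_g^*E_P(u_g)\in L(g^{-1}\bar{H}g\cap\bar{H})'\cap L\bar{G}=\mathbb{C}1$ for each $g$, run the argument of Lemma \ref{lem: 2nd approach} together with group-maximality of $\bar{H}$ to get maximality, and for rigidity write $\phi(u_g)=\lambda(g)u_g$ and kill the character using a relation of the form $ghg\in \bar{H}g\bar{H}$. Where you genuinely diverge is in how the group-theoretic input is produced. The paper never touches the Bruhat--Tits tree: it quotes that $(G,H)$ is a Hecke pair, so each intersection $\bar{H}\cap\bar{g}\bar{H}\bar{g}^{-1}$ has \emph{finite index} in $\bar{H}$; since a finite-index subgroup of a relative I.C.C.\ subgroup is again relative I.C.C., one only checks once that $\bar{H}=PSL_2(\mathbb{Z})$ is relative I.C.C.\ in $\bar{G}$ and all intersections come for free, and for rigidity it suffices to treat the single element $s=\begin{pmatrix}p&0\\0&1/p\end{pmatrix}$ (enough because $G=\langle H,s\rangle$, which is also how maximality of $H$ is seen, by a matrix computation), the relation $sHs\cap HsH\neq\emptyset$ being exhibited by an explicit matrix identity that forces $\lambda_{\bar{s}}^2=\lambda_{\bar{s}}$. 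Your amalgam/tree route gives a more conceptual picture (segment stabilizers, normal forms, the infinite double coset space), but it concentrates all the remaining work precisely where you flag it: the relative I.C.C.\ property of the pointwise stabilizer of a geodesic still needs the density/unbounded-orbit facts (e.g.\ density of $\Gamma_0(p^n)$ in the corresponding compact open stabilizer, or a Zariski-density/centralizer argument), and your remark that a nontrivial $\tau$ ``cannot fix too large a subtree'' is not accurate (unipotent elements fix unbounded subtrees) — what you actually need is only faithfulness of the action plus enough mobility of the segment stabilizer. Two smaller points: you use, but do not establish, that $\bar{H}$ is maximal in $\bar{G}$ as a group (the paper settles this with the same matrix computation that gives $G=\langle H,s\rangle$); and your alternative ending for rigidity via the finite abelianization of $PSL_2(\mathbb{Z}[1/p])$ works when combined with maximality and infinite index, but Proposition \ref{prop: observation to show rigidity does not imply maximality} does not apply verbatim since $\bar{H}$ is not normal in $\bar{G}$.
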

\begin{proof}
We first prove $L\bar{H}$ is maximal in $L\bar{G}$.

As mentioned in \cite[Remark 3.3]{cornulier}, the pair $(G, H)$ is a Hecke pair, i.e. for all $g\in G$, $[H: H\cap gHg^{-1}]<\infty$. Moreover, it is well-known (see e.g. \cite[Section 9]{gg}) that $H$ is a maximal subgroup inside $G$ with infinite index. 

Indeed, it is routine to do a matrix calculation to show that $\forall g\in G\setminus H$, we have $\langle H, g\rangle=\langle H,s \rangle=G$, where $s=\begin{pmatrix}
p&0\\
0&1/p
\end{pmatrix}$.

From above, we can deduce that the pair $(\bar{G}, \bar{H})$ is also a Hecke pair and $\bar{H}$ is also maximal inside $\bar{G}$.

Next, one can check that $\bar{H}$ is relative I.C.C. in $\bar{G}$, which implies that $\bar{H}\cap \bar{g}\bar{H}\bar{g}^{-1}$ is also relative I.C.C. in $\bar{G}$ for all $g\in G$ as $(\bar{G}, \bar{H})$ is a Hecke pair.

Now, let $P$ be any intermediate von Neumann subalgbera between $L\bar{H}$ and $L\bar{G}$ and $E: L\bar{G}\twoheadrightarrow P$ be the conditional expectation. For any $g\in G$, we have $u_{\bar{g}}^*E(u_{\bar{g}})\in L(\bar{H}\cap \bar{g}^{-1}\bar{H}\bar{g})'\cap L\bar{G}=\mathbb{C}1$. So the same proof as in Lemma \ref{lem: 2nd approach} shows $L\bar{H}$ is maximal in $L\bar{G}$.

Next, we show $L\bar{H}$ is also rigid in $L\bar{G}$.

Let $\phi\in Aut(L\bar{G})$ satisfy $\phi|_{L\bar{H}}=id$. We will show $\phi=id$.

For any $\bar{h}\in \bar{H}\cap \bar{s}\bar{H}\bar{s}^{-1}$, we have $\bar{s}^{-1}\bar{h}\bar{s}\in \bar{H}$. This implies that $\phi(u_{\bar{s}})u_{\bar{s}}^*\in L(\bar{H}\cap \bar{s}\bar{H}\bar{s}^{-1})'\cap L\bar{G}=\mathbb{C}1$. Hence, $\phi(u_{\bar{s}})=\lambda_{\bar{s}}u_{\bar{s}}$ for some $\lambda_{\bar{s}}\in \mathbb{T}$. Next, observe that $sHs\cap HsH\neq \emptyset$. Indeed, one can check directly that the following identity holds:
\begin{align*}
s\begin{pmatrix}
1&1\\
p-1&p
\end{pmatrix}s=\begin{pmatrix}
1&p\\
-1&1-p
\end{pmatrix}s\begin{pmatrix}
1-p^2& -1\\
p^3+p^2-p& p+1
\end{pmatrix}.
\end{align*}
Therefore, $\lambda_{\bar{s}}^2=\lambda_{\bar{s}}\in \mathbb{T}$, i.e. $\lambda_{\bar{s}}=1$ and $\phi(u_{\bar{s}})=u_{\bar{s}}$. As $G=\langle H, s \rangle$, we deduce that $\phi=id$.
\end{proof}

Next, we show one more example of a maximal subgroup which generates a non-maximal von Neumann subalgebra.

Let $G$ be $SL_2(\mathbb{Q})$ and $H$ be the subgroup of upper triangular matrices. Recall that $G=H\sqcup HsH$, where $s=\begin{pmatrix}
0&1\\
-1&0
\end{pmatrix}$. Hence $H$ is maximal in $G$. 

\begin{proposition}\label{prop: upper trangular matices is not maximal}
$LH$ is neither maximal nor rigid inside $LG$. 
\end{proposition}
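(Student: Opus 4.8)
### Proof proposal for Proposition \ref{prop: upper trangular matices is not maximal}

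The plan is to exhibit, as in the diagonal-subgroup example and in the comments preceding it, an explicit automorphism $\phi \in \mathrm{Aut}(LG)$ whose fixed-point algebra $\mathrm{Fix}(\phi)$ is strictly between $LH$ and $LG$; this kills both maximality and rigidity at once. The natural candidate comes from an automorphism of $G = SL_2(\mathbb{Q})$ (or $PGL_2$) that fixes $H$ pointwise but is not inner-trivial on $G$. First I would look for a group automorphism $\alpha$ of $G$ with $\alpha|_H = \mathrm{id}$ and $\alpha \neq \mathrm{id}$: conjugation by a diagonal element of $GL_2(\mathbb{Q})$ normalizes $H$ but does not fix it pointwise, so instead I would use the ``transpose-inverse'' type involution twisted to fix the Borel. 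Concretely, conjugation by $s = \begin{pmatrix} 0 & 1 \\ -1 & 0 \end{pmatrix}$ sends $H$ (upper triangular) to the lower triangular subgroup, so that is not it either; the right object is an automorphism of the form $g \mapsto w g w^{-1}$ composed with a field-type or diagonal correction, or more simply one exploits that $H = B$ has a large abelianization: $B \to B/[B,B] \cong \mathbb{Q}^\times \times \mathbb{Q}^\times$ via $\begin{pmatrix} a & b \\ 0 & d\end{pmatrix} \mapsto (a,d)$ need not be what produces the extra fixed algebra, since $[B,B]$ is the unipotent radical $U$.

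The cleanest route, which I would pursue, is to observe that $U = \left\{ \begin{pmatrix} 1 & b \\ 0 & 1 \end{pmatrix} : b \in \mathbb{Q}\right\}$ is normalized by $H$, and that $H$ acts on $U \cong (\mathbb{Q},+)$ through the character $\begin{pmatrix} a & b \\ 0 & d\end{pmatrix} \mapsto a/d$. Define $\phi \in \mathrm{Aut}(LG)$ by $\phi(u_g) = \chi(g) u_g$ where $\chi : G \to \mathbb{T}$ is \emph{not} a homomorphism on all of $G$ but this fails — so instead I would take the honest group automorphism $\alpha$ of $G$ given by conjugation by $\mathrm{diag}(t,1) \in PGL_2(\mathbb{Q})$ for a fixed $t \in \mathbb{Q}^\times$, $t \neq \pm 1$: this fixes the diagonal torus and scales the unipotent parameter $b \mapsto tb$, hence does \emph{not} fix $H$ pointwise, which again is wrong. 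The genuine obstacle, and the step I expect to be the crux, is producing an automorphism of $LG$ fixing $LH$ pointwise that is nontrivial: since $H$ is maximal and $G = H \sqcup HsH$, any such automorphism is determined by its value on $u_s$, and one must show $\phi(u_s) = c\, u_s$ is forced with $c \in \mathbb{T}$, then that the relation $s^2 \in H$ (indeed $s^2 = -I$, central) together with a relation of the form $s h s \in HsH$ pins down $c$ — exactly the mechanism used in Propositions \ref{prop: maximal subgroups in 4-transitve actions case give rise to rigid subalgebras} and \ref{prop: sl_2z is maximal in sl_2(z[1/p])}. So in fact I expect rigidity to \emph{hold} by that mechanism unless there is a genuine nontrivial $\phi$; the resolution must be that $LH$ fails to be rigid for a different reason, namely because $\ell^2 H'' \cap LG \supsetneq LH$ or because the double coset count and the structure of $B$ allow a nontrivial $\phi$ fixing $L(\text{diagonal torus})$ but not $LH$.

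Given this, the approach I would actually commit to: exhibit a concrete intermediate subgroup or an automorphism directly. For non-maximality, I claim the parabolic $H$ is contained in the proper von Neumann subalgebra $P = LH + LHsH \cdot (\text{something})$ — but more honestly, recall $H = TU$ with $T$ the diagonal torus and $U$ unipotent; the subgroup $T U \le H$ is all of $H$, so one instead uses that $G$ has the intermediate object coming from the \emph{opposite} Borel meeting $H$ in $T$: the automorphism $\phi$ induced by $\mathrm{Ad}(w)$ for $w$ in the normalizer of $T$ satisfies $\phi(L T) = LT$ and one checks $\mathrm{Fix}(\phi)$ contains $LT$ and an element like $u_{u_0} + u_{w u_0 w^{-1}}$ (sum of a unipotent and its opposite), which lies outside $LH$, while $u_{u_0} \notin \mathrm{Fix}(\phi)$ and lies outside $\mathrm{Fix}(\phi)$, exactly paralleling the $x = u_{(s,e)} + u_{(e,s)}$ construction for $\Delta(G) < G \times G$. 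The hard part will be checking that this $\mathrm{Fix}(\phi)$ is genuinely strictly larger than $LH$ and strictly smaller than $LG$, i.e. verifying $\phi \neq \mathrm{id}$ and $\phi \neq $ an automorphism fixing all of $LG$; once $\phi$ is an honest group-level automorphism this is immediate from the faithfulness of the left regular representation. Non-rigidity is then automatic: $\phi|_{LH} = \mathrm{id}$ but $\phi \neq \mathrm{id}$ witnesses the failure of rigidity directly. I would present the explicit $w \in GL_2(\mathbb{Q})$ (e.g. $w = \mathrm{diag}(1,-1)$ or a permutation-diagonal matrix whose conjugation fixes $B$ pointwise) and verify the two containments by a short matrix computation, deferring the routine check that $\phi(x) = x$ and $\phi(u_{u_0}) \neq u_{u_0}$ to the reader.
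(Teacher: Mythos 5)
There is a genuine gap: your strategy needs an automorphism $\phi$ of $LG$ with $\phi|_{LH}=\mathrm{id}$, $\phi\neq\mathrm{id}$, and $LH\subsetneq \mathrm{Fix}(\phi)\subsetneq LG$, but none of the candidates you end up committing to has this property, and in fact no group-level automorphism can. If $\alpha=\mathrm{Ad}(g)$ with $g\in GL_2(\mathbb{Q})$ fixes the Borel $H$ pointwise, then $g$ centralizes $H$, and the centralizer of $H$ in $GL_2(\mathbb{Q})$ consists of scalars, so $\alpha=\mathrm{id}$; conjugation by $\mathrm{diag}(1,-1)$ fixes the torus $T$ but sends $\begin{pmatrix}1&b\\0&1\end{pmatrix}$ to $\begin{pmatrix}1&-b\\0&1\end{pmatrix}$, so it does \emph{not} fix $H$ pointwise, and $\mathrm{Ad}(w)$ for $w$ in the normalizer of $T$ moves $H$ to the opposite Borel. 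Consequently the fixed-point algebra of your final $\phi$ contains $LT$ but not $LH$, so it is not an intermediate subalgebra for the inclusion $LH\subset LG$ and witnesses neither non-maximality nor non-rigidity of $LH$. Your own intermediate remark (``I expect rigidity to hold by that mechanism unless there is a genuine nontrivial $\phi$'') correctly identifies the obstruction, but the proposal never resolves it; indeed Proposition \ref{prop: maximal for PSL_2(Q)} shows that once the center is removed (i.e. in $PSL_2(\mathbb{Q})$) maximality and rigidity \emph{do} hold, so no construction blind to the center can succeed.

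The missing idea is precisely the nontrivial center $\{\pm I\}$ of $SL_2(\mathbb{Q})$. The paper first shows, for any intermediate $P$, that $u_s^*E(u_s)$ lies in $L(K)'\cap LG\subseteq L(K)$ with $K=s^{-1}Hs\cap H$ the diagonal subgroup, and that $E(u_s)=u_sa$ for a projection $a\in L(K)$ commuting with $u_s$. It then makes the specific \emph{central} choice $a=\tfrac{1}{2}(u_{I}+u_{-I})\in\mathcal{Z}(LG)$ and sets $P=\{LH,\,u_sa\}''$; the strict inclusions $LH\lneq P\lneq LG$ are checked by an explicit $\ell^2$-computation ($L^2(P)$ is orthogonal to $\delta_s-\delta_{-s}$). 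Non-rigidity comes from the map $\phi(u_h)=u_h$ for $h\in H$, $\phi(u_s)=u_{s^{-1}}(1-a)+u_sa$, which extends to an automorphism of $LG$ because $a$ is central — this $\phi$ is an automorphism of the von Neumann algebra that is \emph{not} induced by any automorphism of $G$, which is exactly what your group-theoretic search could never produce. So the conclusion is correct, but the proof requires the center-based construction (or something equivalent), not a fixed-point algebra of a group automorphism.
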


\begin{proof}
Let $K=s^{-1}Hs\cap H$. A calculation shows the following hold:
\begin{align}\label{eq: calculate rel. I.C.C. for SL 2(Q)}
\begin{split}
&K=\{\begin{pmatrix}
r&0\\
0&1/r
\end{pmatrix}: 0\neq r\in\mathbb{Q}  \}.\\
&\mbox{For any} ~g=\begin{pmatrix}
a&b\\
c&d
\end{pmatrix}\in G , ~\#\{kgk^{-1}: k\in K\}<\infty~\mbox{iff}~ b=c=0. 
\end{split}
\end{align}
Now, for any intermediate subalgebra $P$ between $LH$ and $LG$, denote by $E$ the conditional expectation from $LG$ onto it. Then write $x_g=u_g^*E(u_g)$ for all $g\in G$. Clearly, $x_{hgh'}=x_{gh'}=u_{h'}^{-1}x_gu_{h'}$ for all $h, h'\in H$.

We aim to construct some $P$ such that $LH\lneq P\lneq LG$. To do this, we first show every $P$ can be described in a ``concise'' form.

Note that for all $h\in K$, $shs^{-1}\in H$. Hence $u_hx_su_h^*=x_s$. Therefore, $x_s\in (LK)'\cap LG\subseteq LK$ by (\ref{eq: calculate rel. I.C.C. for SL 2(Q)}) above.

Now, one has $x_s\in LK$, i.e. $E(u_s)=u_sa$ for some $a\in LK$. Apply $E(\cdot)$ to both sides, we get $u_sa=E(u_s)=E(E(u_s))=E(u_s)a=u_sa^2$, so $a=a^2$. Moreover, notice that $LK$ is abelian, hence by functional calculus, $LK\cong L^{\infty}(X)$ and $a=a^2$ implies $a(x)\in \{0 ,1\}$ for a.e. $x\in X$, i.e. $a$ is a projection. In particular, $a=a^*$.

Now, from $s^{-1}=(-id)s$, where $id$ denotes the $2\times 2$ identity matrix, we deduce that 
\[u_{s^{-1}}a=u_{-id}u_sa=u_{-id}E(u_s)=E(u_{(-id)s})=E(u_{s^{-1}})=(u_sa)^*=au_{s^{-1}}.\]
 Hence, $au_s=u_sa$.

To sum up, we have proved $a\in LK$ is a projection which commutes with $u_s$. It is also clear that $P=\{LH, u_sa\}''$.

This shows that every intermediate von Neumann subalgebra can be written as $P=\{LH, u_sa\}''$ for some projection $a\in LK$ which commutes with $u_s$. But it is not clear in general whether $P=LG$ or not for a given nonzero choice of $a$.  However, if we take $a=\frac{u_{id}+u_{-id}}{2}\in \mathcal{Z}(LG)$ and define $P=\{LH, u_sa\}''$, then we claim $LH\lneq P\lneq LG$.

Clearly, $LH\neq P$. We are left to check that $P\neq LG$.

To see this, first notice that (using the fact that $a$ lies in the center of $LG$) $L^2(P)$ can be written as follows:
\begin{align*}
\bigg\{\bigg(\sum_{h\in H}a_hu_h+\sum_{(h_1, h_2)\in H^2}c_{(h_1, h_2)}u_{h_1}u_sau_{h_2}\bigg)\delta_{id}~\bigl\vert~ a_h, c_{(h_1, h_2)}\in \mathbb{C}\bigg\}\cap \ell^2G.
\end{align*}

Using this description, it is easy to check $L^2(P)$ is a proper subspace of $\ell^2(G)$ as it is orthogonal to the nonzero vector $\delta_s-\delta_{-s}$, so $LH\lneq P\lneq LG$.

Another way to argue $P\neq LG$ is to consider the map $\phi: G\to LG$ defined by $\phi(u_h)=u_h, \forall h\in H$  and $\phi(u_s)=u_{s^{-1}}(1-a)+u_sa$. Now, one argues that this $\phi$ extends to an automorphism of $LG$. For this, one only needs to check $\phi$ preserves all possible identities $u_su_{h_1}u_s=u_{h_2}u_su_{h_3}$ once $sh_1s=h_2sh_3$ holds for some $h_i\in H$.
This is clear since $a$ lies in the center of $LG$. And it is easy to verify that $\phi(u_s)\neq u_s$, so $LG\neq P$ as $\phi|_P=id$. This also shows $LH$ is not rigid inside $LG$.
\end{proof}

In the above example, we can mod out the center $\{\pm id\}$ and consider the maximal subgroup $\bar{H}:=H/\{\pm id\}$ of $\bar{G}:=PSL_2(\mathbb{Q})$, where $H$ denotes the subgroup of upper triangular matrices of $G=SL_2(\mathbb{Q})$. Following the above proof, if we define $a$ in the same way, then we just get the identity in $L(\bar{G})$. So the above argument no longer works. In fact, it turns out  $L(\bar{H})$ is both maximal and rigid in $L(\bar{G})$.

\begin{proposition}\label{prop: maximal for PSL_2(Q)}
Under the above notations, $L(\bar{H})$ is both maximal and rigid inside $L(\bar{G})$.
\end{proposition}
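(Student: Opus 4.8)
The plan is to mirror the strategy used for $PSL_2(\mathbb{Z}[1/p])$ in Proposition \ref{prop: sl_2z is maximal in sl_2(z[1/p])} and for the $4$-transitive case in Theorem \ref{thm: faithful 4-transitive actions}, but adapted to the $3$-transitive action $\bar G = PSL_2(\mathbb{Q}) \curvearrowright X = \mathbb{P}(\mathbb{Q}^2)$ with $x = [1,0]$, so that $\bar H = Stab(x)$. The first step is to record the basic group-theoretic facts: this action is faithful and sharply $3$-transitive, hence $\bar G = \bar H \sqcup \bar H \bar s \bar H$ for a suitable $\bar s$ (taking $s = \begin{pmatrix}0&1\\1&0\end{pmatrix}$ or the like, so $\bar s$ swaps $[1,0]$ and $[0,1]$), giving that $\bar H$ is maximal with infinite index, and $\bar H\backslash\bar G/\bar H$ has size two. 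Next I would identify, by direct matrix computation, the subgroup $\bar K := \bar H \cap \bar s^{-1}\bar H\bar s = Stab([1,0]) \cap Stab([0,1])$: this is the image in $PSL_2$ of the diagonal torus $\{\mathrm{diag}(r,1/r): r\in\mathbb{Q}^\times\}$, an abelian group.

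The crucial point — and where this differs from the $4$-transitive case, since the Claim in the proof of Theorem \ref{thm: faithful 4-transitive actions} fails here — is to compute $L\bar K' \cap L\bar G$. Because the action is only $3$-transitive, $\bar K$ fixes the two points $[1,0],[0,1]$ and acts on $X\setminus\{[1,0],[0,1]\}$, and I expect the relative commutant to be larger than $\mathbb{C}1$: concretely, I would show $L\bar K'\cap L\bar G$ is contained in $L(\{g\in\bar G: \bar KgK^{-1}\text{ finite}\}) = L(N_{\bar G}(\bar K))$, and then determine $N_{\bar G}(\bar K)$. The normalizer of the diagonal torus in $PSL_2(\mathbb{Q})$ is the torus extended by the Weyl element $\bar s$, i.e. a group of the form $\bar K \rtimes \mathbb{Z}/2$; so $L\bar K' \cap L\bar G \subseteq L(\bar K\langle\bar s\rangle)$. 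This is the main obstacle: getting a clean enough description of this relative commutant to push the argument through. Given any intermediate $P$ with $L\bar H < P < L\bar G$ and conditional expectation $E$, condition (1)-type reasoning gives $u_{\bar s}^* E(u_{\bar s}) \in L\bar K' \cap L\bar G \subseteq L(\bar K\langle\bar s\rangle)$, so $E(u_{\bar s})$ lies in $u_{\bar s}L\bar K + L\bar K$; using $E\circ E = E$, trace-preservation, and the structure of the double coset $\bar H\bar s\bar H$, one extracts that $E(u_{\bar s}) = \bar s \cdot a$ with $a\in L\bar K$ a projection commuting with $u_{\bar s}$, exactly as in Proposition \ref{prop: upper trangular matices is not maximal}.

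The final step is to show the only such projection giving an intermediate algebra is $a = 0$ or $a = 1$, i.e. the degeneracy that killed the counterexample in Proposition \ref{prop: upper trangular matices is not maximal} (where $a = (u_{id}+u_{-id})/2$ survived) does \emph{not} survive after modding out the center — because that central projection becomes $1$ in $L\bar G$. I would argue that any projection $a\in L\bar K$ commuting with $u_{\bar s}$ and compatible with all the relations $\bar s h_1 \bar s = h_2 \bar s h_3$ ($h_i\in\bar H$) forced by the group structure must be $\bar s$-invariant in a way that, combined with the $3$-transitivity (which lets us conjugate the torus-directions around), forces $a\in\{0,1\}$; here I would use that there exist $h\in\bar H$ with $\bar s h\bar s\in\bar H\bar s\bar H$ (as in the proofs of Propositions \ref{prop: maximal subgroups in 4-transitve actions case give rise to rigid subalgebras} and \ref{prop: sl_2z is maximal in sl_2(z[1/p])}) to get $a^2 = a$ pinned to a scalar. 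This simultaneously yields maximality ($P = L\bar H$ or $L\bar G$) and rigidity: for $\phi\in Aut(L\bar G)$ with $\phi|_{L\bar H} = id$, the same computation gives $u_{\bar s}^*\phi(u_{\bar s}) \in L\bar K'\cap L\bar G$, then $\phi(u_{\bar s}) = \bar s\cdot u$ for a unitary $u\in L\bar K$ commuting with $u_{\bar s}$ with $u^2$ scalar, and the relation $\bar s h\bar s\in\bar H\bar s\bar H$ forces $u = 1$, whence $\phi = id$ since $\bar G = \langle \bar H, \bar s\rangle$.
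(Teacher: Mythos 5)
Your reduction is the same as the paper's: you arrive at $E(u_{\bar s})=u_{\bar s}a$ for a projection $a\in L(\bar K)$ commuting with $u_{\bar s}$ (and $\phi(u_{\bar s})=u_{\bar s}u$ with $u\in\mathcal U(L(\bar K))$ for rigidity). Two remarks on this part. First, your description of the relative commutant is off: by (\ref{eq: calculate rel. I.C.C. for SL 2(Q)}) the set of elements of $\bar G$ with finite $\bar K$-conjugacy class is $\bar K$ itself, not $N_{\bar G}(\bar K)$ --- the Weyl element normalizes $\bar K$ but $k\bar sk^{-1}$ runs through infinitely many elements --- so one gets directly $L(\bar K)'\cap L(\bar G)=L(\bar K)$, and there is no extra $L(\bar K)$-summand to remove (your weaker containment could be repaired, e.g.\ by applying the canonical expectation onto $L(\bar H)$, but it is simply unnecessary). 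This is a minor point.

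The genuine gap is the final step. You propose to get $a\in\{0,1\}$ (resp.\ $u=1$) from the existence of $h\in\bar H$ with $\bar sh\bar s\in\bar H\bar s\bar H$, ``as in Propositions \ref{prop: maximal subgroups in 4-transitve actions case give rise to rigid subalgebras} and \ref{prop: sl_2z is maximal in sl_2(z[1/p])}''. In those proofs the unknown $u_g^*E(u_g)$ or $u_g^*\phi(u_g)$ was already a scalar, and the relation $ghg=h_1gh_2$ only pins down that scalar via $\lambda^2=\lambda$. Here $a$ lives in the diffuse abelian algebra $L(\bar K)$, and feeding the relation $\bar sh\bar s=h_1\bar sh_2$ into $E(u_{\bar sh})E(u_{\bar s})=E(E(u_{\bar sh})u_{\bar s})$ yields only the identity $a\sigma_{h_1sh_2}(a)=a\sigma_{h_1s}(a)$ as in (\ref{eq: equality on a to show maximality in PSL_2(Q)}), which involves conjugates of $a$ supported off $\bar K$; nothing like ``$a^2=a$ pinned to a scalar'' follows from it. That this identity forces $a$ to be scalar is exactly the hard point: before passing to the quotient by the center the analogous identity is satisfied by the nontrivial projection $(1+u_{-id})/2$ (Proposition \ref{prop: upper trangular matices is not maximal}), so some finer mechanism is unavoidable, and your appeal to ``$3$-transitivity letting us conjugate the torus-directions around'' does not supply one. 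The paper's proof does: choosing $h,h_1,h_2$ unipotent, (\ref{eq: rotation to show rigidity for PSL_2(Q)}) shows $a\sigma_{h_1s}(a)$ is supported on upper triangular elements and $a\sigma_{h_1sh_2}(a)$ on lower triangular ones, hence both lie in $L(\overline{Upper})\cap L(\overline{Lower})=L(\bar K)$; applying the expectation $E'$ onto $L(\bar K)$ gives $a\sigma_{h_1s}(a)=a\lambda_1$, and evaluating Fourier coefficients against $q=\left(\begin{smallmatrix}1&t_0^2-1\\0&1\end{smallmatrix}\right)$ forces $\lambda_{t_0}^2=0$ for every $t_0\neq 1$, whence $a\in\{0,id\}$ (and $u=id$ in the rigidity argument). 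Without this localization-plus-coefficient computation, or a substitute for it, the proposal does not establish the proposition.
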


\begin{proof}
With a little abuse of notation, we will always use the same letter to denote both an element in $G$ and its image in $\bar{G}$. 

Let us first prove $L(\bar{H})$ is rigid in $L(\bar{G})$.

Let $\phi\in Aut(L\bar{G})$ satisfy $\phi|_{L(\bar{H})}=id$. We want to show that $\phi=id$.

For any $h\in s^{-1}\bar{H}s\cap \bar{H}=\bar{K}$, we have $shs^{-1}\in \bar{H}$, hence $\phi(u_s)u_h\phi(u_s)^{-1}=u_su_hu_s^{-1}$, and we deduce that $u_s^{-1}\phi(u_s)\in L(s^{-1}\bar{H}s\cap \bar{H})'\cap L\bar{G}=L(\bar{K})'\cap L(\bar{G})=L(\bar{K})$. 

Now, write $\phi(u_s)=u_sa$ for some $a\in \mathcal{U}(L(\bar{K}))$.

Since $s^2=id\in \bar{G}$, we get $id=\phi(u_s)^2=(u_sa)^2$ and $u_s^2=id$. So $u_s=au_sa$.

For any $h$, $h_1$, $h_2$ in $\bar{H}$ satisfying $shs=h_1sh_2$ in $\bar{G}$, we get $\phi(u_su_hu_s)=\phi(u_{h_1}u_su_{h_2})$. We compute both sides to get the following:
\begin{align*}
\phi(u_su_hu_s)&=u_sa(u_hu_s)a\\
&=(u_sa)u_s^{-1}u_{h_1sh_2}a~  \mbox{(Use $u_{hs}=u_{s^{-1}h_1sh_2}$)}\\
&=a^{-1}u_su_s^{-1}u_{h_1sh_2}a ~\mbox{(Use $u_sa=a^{-1}u_s$)}\\
&=a^{-1}\sigma_{h_1sh_2}(a)u_{h_1sh_2}~\mbox{(Here, $\sigma_g:=Ad(u_g),\forall g\in \bar{G}$.)}\\
\phi(u_{h_1}u_su_{h_2})&=u_{h_1}u_sau_{h_2}=\sigma_{h_1s}(a)u_{h_1sh_2}.  
\end{align*}
By comparing the above two expressions, we deduce that
\begin{align}\label{eq: equality on a to show rigidity for PSL_2(Q)}
\sigma_{h_1sh_2}(a)=a\sigma_{h_1s}(a).
\end{align}

Then, notice that for $h=\begin{pmatrix}
1&1\\
0&1
\end{pmatrix}$ and $h_1=h_2=\begin{pmatrix}
1&-1\\
0&1
\end{pmatrix}$, the identity $shs=h_1sh_2$ holds in $\bar{G}$. Moreover, a calculation shows $h_1s=\begin{pmatrix}
1&1\\-1&0
\end{pmatrix}$ and $h_1sh_2=\begin{pmatrix}
1&0\\
-1&1
\end{pmatrix}$. This implies that for any $h=\begin{pmatrix}
t&0\\
0&\frac{1}{t}
\end{pmatrix}\in \bar{H}$, we have 
\begin{align}\label{eq: rotation to show rigidity for PSL_2(Q)}
\sigma_{h_1sh_2}(h)=\begin{pmatrix}
t&0\\
-t+\frac{1}{t}&\frac{1}{t}
\end{pmatrix},\quad
\sigma_{h_1s}(h)=\begin{pmatrix}
\frac{1}{t}&-t+\frac{1}{t}\\
0&t
\end{pmatrix}.
\end{align}
Write $a=\sum_{t\in\mathbb{Q}^+}\lambda_tu_{p_t}\in \mathcal{U}(L(\bar{K}))$, where $p_t=\begin{pmatrix}
t&0\\
0&\frac{1}{t}
\end{pmatrix}.$
By (\ref{eq: equality on a to show rigidity for PSL_2(Q)}) and (\ref{eq: rotation to show rigidity for PSL_2(Q)}), we deduce that 
\begin{align}\label{eq: locate rotation of a to show rigidity of PSL_2(Q)}
\sigma_{h_1sh_2}(a)=a\sigma_{h_1s}(a)\in L(\overline{Upper})\cap L(\overline{Lower})=L(\bar{K}).
\end{align}
Here, $Upper$ and $Lower$ denote the subgroups of $G$ consisting of all upper triangular matrices and lower triangular matrices respectively. 

Denote by $E': L(\bar{G})\twoheadrightarrow L(\bar{K})$ the conditional expectation onto $L(\bar{K})$.
Then, we know that
$\sigma_{h_1sh_2}(a)=E'(a\sigma_{h_1s}(a))=aE'(\sigma_{h_1s}(a))=a\lambda_1$ by (\ref{eq: rotation to show rigidity for PSL_2(Q)}) and (\ref{eq: locate rotation of a to show rigidity of PSL_2(Q)}).
Clearly, this implies that $\lambda_t=0$ for all $t\neq 1$ and $\lambda_1=\lambda_1^2$. As $a$ is a unitary, we get $\lambda_1=1$, so  $a=id$ and $\phi(u_s)=u_s$. Since $\bar{G}=\langle \bar{H},s \rangle$, we deduce $\phi=id$.

Now, let us show $L(\bar{H})$ is maximal in $L(\bar{G})$. The proof is inspired by the proof of \cite[Lemma 3.12]{js}.

Following the proof of Proposition \ref{prop: upper trangular matices is not maximal}, we can still get $E(u_s)=u_sa$ for some projection $a\in L(\bar{K})$ which commutes with $u_s$.

We are left to show that if $a\neq 0$, then $a=id$.

For any $h\in\bar{H}$, note that $E(u_{sh})=u_sau_h$. Once again, assume $shs=h_1sh_2$ holds for some $h_1$, $h_2$ in $\bar{H}$. 
Then we plug these elements into both sides of the identity $E(u_{sh})E(u_s)=E(E(u_{sh})u_s)$ and get the following:
\begin{align*}
E(u_{sh})E(u_s)&=(u_sau_h)(u_sa)=au_{shs}a=au_{h_1sh_2}a=a\sigma_{h_1sh_2}(a)u_{h_1sh_2},\\
E(E(u_{sh})u_s)&=E(u_sau_hu_s)=E(au_su_hu_s)
=aE(u_su_hu_s)\\
&=aE(u_{h_1sh_2})=au_{h_1}u_sau_{h_2}=a\sigma_{h_1s}(a)u_{h_1sh_2}.
\end{align*}
Hence, we get 
\begin{align}\label{eq: equality on a to show maximality in PSL_2(Q)}
a\sigma_{h_1sh_2}(a)=a\sigma_{h_1s}(a).
\end{align}
Now, we claim that the above implies $a=id$.

As in the proof of the rigidity part, we still take the same $h_1$ and $h_2$ as there. And write $a=\sum_{t\in\mathbb{Q}^+}\lambda_tu_{p_t}\in \mathcal{P}(L(\bar{K}))$, where $p_t=\begin{pmatrix}
t&0\\
0&\frac{1}{t}
\end{pmatrix}.$

It is easy to check $au_s=u_sa$ is equivalent to $\lambda_t=\lambda_{1/t}$ for all $t>0$.

From (\ref{eq: rotation to show rigidity for PSL_2(Q)}) and (\ref{eq: equality on a to show maximality in PSL_2(Q)}), we deduce that
\begin{align}\label{eq: locate rotation of a to prove maximality for PSL_2(Q)}
a\sigma_{h_1sh_2}(a)=a\sigma_{h_1s}(a)\in L(\overline{Upper})\cap L(\overline{Lower})=L(\bar{K}).
\end{align}
Here, $Upper$ and $Lower$ denote the subgroups of $G$ consisting of all upper triangular matrices and lower triangular matrices respectively. 
Denote by $E': L(\bar{G})\twoheadrightarrow L(\bar{K})$ the conditional expectation onto $L(\bar{K})$.

Fix any $1\neq t_0\in \mathbb{Q}^+$ and set $q=\begin{pmatrix}
1&t_0^2-1\\
0&1
\end{pmatrix}\in \bar{G}$. 

On the one hand, we have
\begin{align*}
\langle a\sigma_{h_1s}(a)u_q\delta_{id}, \delta_{id} \rangle&=\langle \sigma_{h_1s}(a)u_q\delta_{id}, a\delta_{id} \rangle=\langle \lambda_{t_0}u_{p_{1/t_0}}\delta_{id}, a\delta_{id} \rangle\\
&=\lambda_{t_0}\lambda_{1/t_0}=\lambda_{t_0}^2.
\end{align*}
On the other hand, from (\ref{eq: locate rotation of a to prove maximality for PSL_2(Q)}), we also have $a\sigma_{h_1s}(a)=E'(a\sigma_{h_1sh_2}(a))=aE'(\sigma_{h_1sh_2}(a))=a\lambda_1$.
Hence, we also have
\begin{align*}
\lambda_{t_0}^2=\langle a\sigma_{h_1s}(a)u_q\delta_{id}, \delta_{id} \rangle=\langle a\lambda_1u_q\delta_{id}, \delta_{id} \rangle=\lambda_1\langle u_q\delta_{id},a\delta_{id} \rangle=0.
\end{align*}
Therefore, we deduce $\lambda_{t_0}=0$ for all $1\neq t_0\in\mathbb{Q}^+$. As $a$ is a nonzero projection, this implies that $a=id$. 
\end{proof}

\subsection{Two questions}\label{subsection: two endding questions}

We do not know whether the hyperfinite II$_1$ factor $R$ can be embedded into a non-thin II$_1$ factor (e.g. the free group factor $LF_n$ for $n\geq 3$ by \cite[Corollary 4.3]{gepopa}) as a maximal subfactor with infinite Jones index. Motivated by this question and inspired by \cite[Corollary 1]{ozawapopa} and Peterson-Thom conjecture in \cite[p.590, last paragraph]{petersonthom}, we ask the following questions:

\begin{question}\label{question: infinite intersection for max. subgroups in free groups?}
Let $G$ be the non-abelian free group $F_n$ for $2\leq n<\infty$. If $H$ is a maximal subgroup with infinite index in $G$, is $|gHg^{-1}\cap H|=\infty$ for all $g\in G$?
\end{question}
One may also consider the following von Neumann algebra analog. 
\begin{question}\label{question: max subfactor intersects its conjugate diffusely?}
Let $M$ be the free group factor $LF_n$ for $2\leq n<\infty$. If $N$ is a maximal subfactor with infinite Jones index in $M$, is $uNu^*\cap N$ diffuse for all unitary $u$ in $M$? 
\end{question}
Note that Question \ref{question: infinite intersection for max. subgroups in free groups?} has a negative answer if $G=PSL_n(\mathbb{Z})$ for $n\geq 3$ by \cite{gm}.\\
\paragraph{\textbf{Acknowledgements}}
The author was partially supported by the National Science Center (NCN) grant no. 2014/14/E/ST1/00525 and also by the grant 346300 for IMPAN from the Simons Foundation and the matching 2015-2019 Polish MNiSW fund. A support  of the FWO-PAN grant ``von Neumann algebras arising from quantum symmetries" is also acknowledged.
The author thanks Alejandra Garrido, Liming Ge, Adam Skalski, Yuhei Suzuki, Stefaan Vaes for helpful discussions. He is also very grateful to Adam Skalski and Yuhei Suzuki for reading the draft carefully, pointing out inaccuracies and providing many helpful comments. 

\begin{bibdiv}
\begin{biblist}

\bib{amo}{article}{
   author={Arzhantseva, G.},
   author={Minasyan, A.},
   author={Osin, D.},
   title={The SQ-universality and residual properties of relatively
   hyperbolic groups},
   journal={J. Algebra},
   volume={315},
   date={2007},
   number={1},
   pages={165--177},}

\bib{bedos}{article}{
   author={B\'{e}dos, E.},
   title={On actions of amenable groups on ${\rm II}_1$-factors},
   journal={J. Funct. Anal.},
   volume={91},
   date={1990},
   number={2},
   pages={404--414},}

\bib{bc}{article}{
   author={Boutonnet, R.},
   author={Carderi, A.},
   title={Maximal amenable von Neumann subalgebras arising from maximal
   amenable subgroups},
   journal={Geom. Funct. Anal.},
   volume={25},
   date={2015},
   number={6},
   pages={1688--1705},}

\bib{bc_ann}{article}{
   author={Boutonnet, R.},
   author={Carderi, A.},
   title={Maximal amenable subalgebras of von Neumann algebras associated
   with hyperbolic groups},
   journal={Math. Ann.},
   volume={367},
   date={2017},
   number={3-4},
   pages={1199--1216},}

\bib{cam}{article}{
   author={Cameron, J.},
   title={Structure results for normalizers of ${\rm II}_1$ factors},
   journal={Internat. J. Math.},
   volume={22},
   date={2011},
   number={7},
   pages={947--979},}

\bib{ccssww}{article}{
   author={Cameron, J.},
   author={Christensen, E.},
   author={Sinclair, A.},
   author={Smith, R.},
   author={White, S.},
   author={Wiggins, A.},
   title={Kadison-Kastler stable factors},
   journal={Duke Math. J.},
   volume={163},
   date={2014},
   number={14},
   pages={2639--2686},}

\bib{cfrw}{article}{
   author={Cameron, J.},
   author={Fang, J.},
   author={Ravichandran, M.},
   author={White, S.},
   title={The radial masa in a free group factor is maximal injective},
   journal={J. Lond. Math. Soc. (2)},
   volume={82},
   date={2010},
   number={3},
   pages={787--809},}

\bib{cha}{book}{
   author={Chaynikov, V.},
   title={Properties of hyperbolic groups: Free normal subgroups,
   quasiconvex subgroups and actions of maximal growth},
   note={Thesis (Ph.D.)--Vanderbilt University},
   publisher={ProQuest LLC, Ann Arbor, MI},
   date={2012},
   pages={71},}

\bib{cd}{article}{
author={Chifan, I.},
author={Das, S.},
title={Rigidity results for von Neumann algebras arising from mixing extensions of profinite actions of groups on probability spaces},
status={arXiv: 1903.07143},
}

\bib{choda.h}{article}{
   author={Choda, H.},
   title={A Galois correspondence in a von Neumann algebra},
   journal={Tohoku Math. J. (2)},
   volume={30},
   date={1978},
   number={4},
   pages={491--504},}

\bib{choda.m}{article}{
   author={Choda, M.},
   title={Some relations of ${\rm II}_{1}$-factors on free groups},
   journal={Math. Japon.},
   volume={22},
   date={1977},
   number={3},
   pages={383--394},}

\bib{cj}{article}{
   author={Connes, A.},
   author={Jones, V.},
   title={Property $T$ for von Neumann algebras},
   journal={Bull. London Math. Soc.},
   volume={17},
   date={1985},
   number={1},
   pages={57--62},}

\bib{dgo}{article}{
   author={Dahmani, F.},
   author={Guirardel, V.},
   author={Osin, D.},
   title={Hyperbolically embedded subgroups and rotating families in groups
   acting on hyperbolic spaces},
   journal={Mem. Amer. Math. Soc.},
   volume={245},
   date={2017},
   number={1156},
   pages={v+152},}

\bib{cornulier}{article}{
   author={de Cornulier, Y.},
   title={Finitely presented wreath products and double coset
   decompositions},
   journal={Geom. Dedicata},
   volume={122},
   date={2006},
   pages={89--108},}

\bib{dix}{article}{
   author={Dixon, J.},
   title={Most finitely generated permutation groups are free},
   journal={Bull. London Math. Soc.},
   volume={22},
   date={1990},
   number={3},
   pages={222--226},}

\bib{dyk1}{article}{
   author={Dykema, K.},
   title={Free products of hyperfinite von Neumann algebras and free
   dimension},
   journal={Duke Math. J.},
   volume={69},
   date={1993},
   number={1},
   pages={97--119},}

\bib{dyk2}{article}{
   author={Dykema, K.},
   title={Interpolated free group factors},
   journal={Pacific J. Math.},
   volume={163},
   date={1994},
   number={1},
   pages={123--135},}

\bib{fms}{article}{
   author={Fima, P.},
   author={Moon, S.},
   author={Stalder, Y.},
   title={Highly transitive actions of groups acting on trees},
   journal={Proc. Amer. Math. Soc.},
   volume={143},
   date={2015},
   number={12},
   pages={5083--5095},}

\bib{fg}{article}{
author={Francoeur, D.},
author={Garrido, A.},
title={Maximal subroups of groups of intermediate growth},
journal={Adv. Math.},
volume={340},
pages={1067--1107},
year={2018},
}

\bib{gg_highly transitive}{article}{
   author={Garion, S.},
   author={Glasner, Y.},
   title={Highly transitive actions of ${\rm Out}(F_n)$},
   journal={Groups Geom. Dyn.},
   volume={7},
   date={2013},
   number={2},
   pages={357--376},}

\bib{ge_max}{article}{
   author={Ge, L.},
   title={On maximal injective subalgebras of factors},
   journal={Adv. Math.},
   volume={118},
   date={1996},
   number={1},
   pages={34--70},}

\bib{ge}{article}{
   author={Ge, L.},
   title={On ``Problems on von Neumann algebras by R. Kadison, 1967''},
   journal={Acta Math. Sin. (Engl. Ser.)},
   volume={19},
   date={2003},
   number={3},
   pages={619--624},}
   
\bib{gekadison}{article}{
   author={Ge, L.},
   author={Kadison, R.},
   title={On tensor products for von Neumann algebras},
   journal={Invent. Math.},
   volume={123},
   date={1996},
   number={3},
   pages={453--466},}   
   
\bib{gepopa}{article}{
author={Ge, L.},
author={Popa, S.},
title={On some decomposition properties of factors of type II$_1$},
journal={Duke Math. J.},
year={1998},
number={1},
pages={79--101},
volume={94}, } 


\bib{gg}{article}{
   author={Gelander, T.},
   author={Glasner, Y.},
   title={Countable primitive groups},
   journal={Geom. Funct. Anal.},
   volume={17},
   date={2008},
   number={5},
   pages={1479--1523},}

\bib{ggs}{article}{
   author={Gelander, T.},
   author={Glasner, Y.},
   author={So\u\i fer, G.}
   title={Maximal subgroups of countable groups, a survey},
   status={arXiv: 1909.09361},
}

\bib{gm}{article}{
   author={Gelander, T.},
   author={Meiri, C.},
   title={Maximal subgroups of ${\rm SL}(n,\Bbb Z)$},
   journal={Transform. Groups},
   volume={21},
   date={2016},
   number={4},
   pages={1063--1078},}

\bib{gg_sharply 2 transitive}{article}{
   author={Glasner, Y.},
   author={Gulko, D.},
   title={Sharply 2-transitive linear groups},
   journal={Int. Math. Res. Not. IMRN},
   date={2014},
   number={10},
   pages={2691--2701},}

\bib{gs_F}{article}{
   author={Golan, G.},
   author={Sapir, M.},
   title={On subgroups of R. Thompson's group $F$},
   journal={Trans. Amer. Math. Soc.},
   volume={369},
   date={2017},
   number={12},
   pages={8857--8878},}

\bib{gun}{article}{
   author={Gunhouse, S.},
   title={Highly transitive representations of free products on the natural
   numbers},
   journal={Arch. Math. (Basel)},
   volume={58},
   date={1992},
   number={5},
   pages={435--443},}
   
\bib{hall}{article}{
   author={Hall, M.},
   title={Coset representations in free groups},
   journal={Trans. Amer. Math. Soc.},
   volume={67},
   date={1949},
   pages={421--432},}

   \bib{hamana}{article}{
   author={Hamana, M.},
   title={Injective envelopes of operator systems},
   journal={Publ. Res. Inst. Math. Sci.},
   volume={15},
   date={1979},
   number={3},
   pages={773--785},}

 \bib{hic}{article}{
   author={Hickin, K.},
   title={Highly transitive Jordan representations of free products},
   journal={J. London Math. Soc. (2)},
   volume={46},
   date={1992},
   number={1},
   pages={81--91},}

\bib{houdayer}{article}{
   author={Houdayer, C.},
   title={A class of II$_1$ factors with an exotic abelian maximal
   amenable subalgebra},
   journal={Trans. Amer. Math. Soc.},
   volume={366},
   date={2014},
   number={7},
   pages={3693--3707},}

\bib{ho}{article}{
   author={Hull, M.},
   author={Osin, D.},
   title={Transitivity degrees of countable groups and acylindrical
   hyperbolicity},
   journal={Israel J. Math.},
   volume={216},
   date={2016},
   number={1},
   pages={307--353},}

\bib{js}{article}{
author={Jiang,Y.},
author={Skalski, A.},
title={Maximal subgroups and von Neumann subalgebras with the Haagerup property},
status={arXiv: 1903.08190},
}

\bib{jm}{article}{
   author={Juschenko, K.},
   author={Monod, N.},
   title={Cantor systems, piecewise translations and simple amenable groups},
   journal={Ann. of Math. (2)},
   volume={178},
   date={2013},
   number={2},
   pages={775--787},}

\bib{kr_1}{book}{
   author={Kadison, R. V.},
   author={Ringrose, J. R.},
   title={Fundamentals of the theory of operator algebras. Vol. I},
   series={Pure and Applied Mathematics},
   volume={100},
   note={Elementary theory},
   publisher={Academic Press, Inc. [Harcourt Brace Jovanovich, Publishers],
   New York},
   date={1983},
   pages={xv+398},}

\bib{kr_2}{book}{
   author={Kadison, R. V.},
   author={Ringrose, J. R.},
   title={Fundamentals of the theory of operator algebras. Vol. II},
   series={Pure and Applied Mathematics},
   volume={100},
   note={Advanced theory},
   publisher={Academic Press, Inc., Orlando, FL},
   date={1986},
   pages={i--xiv and 399--1074},}

   \bib{kit}{article}{
   author={Kitroser, D.},
   title={Highly-transitive actions of surface groups},
   journal={Proc. Amer. Math. Soc.},
   volume={140},
   date={2012},
   number={10},
   pages={3365--3375},}
 
 \bib{lm}{article}{
   author={Le Boudec, A.},
   author={Matte Bon, N.},
  title={Triple transitivity and non-free actions in dimension one},
status={arXiv: 1906.05744}  
  }  
   
\bib{longo}{article}{
   author={Longo, R.},
   title={Simple injective subfactors},
   journal={Adv. in Math.},
   volume={63},
   date={1987},
   number={2},
   pages={152--171},}

  \bib{ms_1}{article}{
   author={Margulis, G. A.},
   author={So\u{\i}fer, G. A.},
   title={A criterion for the existence of maximal subgroups of infinite
   index in a finitely generated linear group},
   language={Russian},
   journal={Dokl. Akad. Nauk SSSR},
   volume={234},
   date={1977},
   number={6},
   pages={1261--1264},} 
   
\bib{ms_2}{article}{
   author={Margulis, G. A.},
   author={So\u{\i}fer, G. A.},
   title={Nonfree maximal subgroups of infinite index of the group ${\rm
   SL}_{n}({\bf Z})$},
   language={Russian},
   journal={Uspekhi Mat. Nauk},
   volume={34},
   date={1979},
   number={4(208)},
   pages={203--204},}   
   
\bib{ms_3}{article}{
   author={Margulis, G. A.},
   author={So\u{\i}fer, G. A.},
   title={Maximal subgroups of infinite index in finitely generated linear
   groups},
   journal={J. Algebra},
   volume={69},
   date={1981},
   number={1},
   pages={1--23},}   

\bib{matui}{article}{
   author={Matui, H.},
   title={Topological full groups of one-sided shifts of finite type},
   journal={J. Reine Angew. Math.},
   volume={705},
   date={2015},
   pages={35--84},}
   
   \bib{mcd}{article}{
   author={McDonough, T.},
   title={A permutation representation of a free group},
   journal={Quart. J. Math. Oxford Ser. (2)},
   volume={28},
   date={1977},
   number={111},
   pages={353--356},}

\bib{ms}{article}{
   author={Moon, S.},
   author={Stalder, Y.},
   title={Highly transitive actions of free products},
   journal={Algebr. Geom. Topol.},
   volume={13},
   date={2013},
   number={1},
   pages={589--607},}

\bib{nt}{article}{
   author={Nakamura, M.},
   author={Takeda, Z.},
   title={On the extensions of finite factors. I},
   journal={Proc. Japan Acad.},
   volume={35},
   date={1959},
   pages={149--154},}

\bib{nek_1}{article}{
   author={Nekrashevych, V.},
   title={Palindromic subshifts and simple periodic groups of intermediate
   growth},
   journal={Ann. of Math. (2)},
   volume={187},
   date={2018},
   number={3},
   pages={667--719},}

\bib{nek_2}{article}{
   author={Nekrashevych, V.},
   title={Simple groups of dynamical origin},
   journal={Ergodic Theory Dynam. Systems},
   volume={39},
   date={2019},
   number={3},
   pages={707--732},}

\bib{ol_1}{article}{
   author={Ol\cprime shanski\u{\i}, A. Yu.},
   title={An infinite group with subgroups of prime orders},
   language={Russian},
   journal={Izv. Akad. Nauk SSSR Ser. Mat.},
   volume={44},
   date={1980},
   number={2},
   pages={309--321, 479},}

\bib{ol_2}{article}{
   author={Ol\cprime shanski\u{\i}, A. Yu.},
   title={Groups of bounded period with subgroups of prime order},
   language={Russian},
   journal={Algebra i Logika},
   volume={21},
   date={1982},
   number={5},
   pages={553--618},}

\bib{ols}{article}{
   author={Ol\cprime shanski\u{\i}, A. Yu.},
   title={On the Bass-Lubotzky question about quotients of hyperbolic
   groups},
   journal={J. Algebra},
   volume={226},
   date={2000},
   number={2},
   pages={807--817},}

   \bib{ols_highly transitive}{article}{
   author={Ol\cprime shanski\u{\i}, A. Yu.},
   title={On pairs of finitely generated subgroups in free groups},
   journal={Proc. Amer. Math. Soc.},
   volume={143},
   date={2015},
   number={10},
   pages={4177--4188},}

\bib{osin_book}{article}{
   author={Osin, D.},
   title={Relatively hyperbolic groups: intrinsic geometry, algebraic
   properties, and algorithmic problems},
   journal={Mem. Amer. Math. Soc.},
   volume={179},
   date={2006},
   number={843},
   pages={vi+100},}

\bib{osin}{article}{
   author={Osin, D.},
   title={Acylindrically hyperbolic groups},
   journal={Trans. Amer. Math. Soc.},
   volume={368},
   date={2016},
   number={2},
   pages={851--888},}

\bib{ozawapopa}{article}{
   author={Ozawa, N.},
   author={Popa, S.},
   title={On a class of ${\rm II}_1$ factors with at most one Cartan
   subalgebra},
   journal={Ann. of Math. (2)},
   volume={172},
   date={2010},
   number={1},
   pages={713--749},}

\bib{petersonthom}{article}{
   author={Peterson, J.},
   author={Thom, A.},
   title={Group cocycles and the ring of affiliated operators},
   journal={Invent. Math.},
   volume={185},
   date={2011},
   number={3},
   pages={561--592},}

\bib{popa_max injective}{article}{
   author={Popa, S.},
   title={Maximal injective subalgebras in factors associated with free
   groups},
   journal={Adv. in Math.},
   volume={50},
   date={1983},
   number={1},
   pages={27--48},}

\bib{popa_06}{article}{
   author={Popa, S.},
   title={Some rigidity results for non-commutative Bernoulli shifts},
   journal={J. Funct. Anal.},
   volume={230},
   date={2006},
   number={2},
   pages={273--328},}


\bib{rad_92}{article}{
   author={R\u{a}dulescu, F.},
   title={The fundamental group of the von Neumann algebra of a free group
   with infinitely many generators is $\mathbb{R}_+\setminus \{0\}$},
   journal={J. Amer. Math. Soc.},
   volume={5},
   date={1992},
   number={3},
   pages={517--532},}

\bib{rad_94}{article}{
   author={R\u{a}dulescu, F.},
   title={Random matrices, amalgamated free products and subfactors of the
   von Neumann algebra of a free group, of noninteger index},
   journal={Invent. Math.},
   volume={115},
   date={1994},
   number={2},
   pages={347--389},}

\bib{rst}{article}{
   author={Rips, E.},
   author={Segev, Y.},
   author={Tent, K.},
   title={A sharply 2-transitive group without a non-trivial abelian normal
   subgroup},
   journal={J. Eur. Math. Soc. (JEMS)},
   volume={19},
   date={2017},
   number={10},
   pages={2895--2910},}

\bib{rt}{article}{
   author={Rips, E.},
   author={Tent, K.},
   title={Sharply 2-transitive groups of characteristic 0},
   journal={J. Reine Angew. Math.},
   volume={750},
   date={2019},
   pages={227--238},}

\bib{gtm148}{book}{
   author={Rotman, J. J.},
   title={An introduction to the theory of groups},
   series={Graduate Texts in Mathematics},
   volume={148},
   edition={4},
   publisher={Springer-Verlag, New York},
   date={1995},
   pages={xvi+513},}

\bib{sav_1}{article}{
   author={Savchuk, D.},
   title={Some graphs related to Thompson's group $F$},
   conference={
      title={Combinatorial and geometric group theory},
   },
   book={
      series={Trends Math.},
      publisher={Birkh\"{a}user/Springer Basel AG, Basel},
   },
   date={2010},
   pages={279--296},}

\bib{sav_2}{article}{
   author={Savchuk, D.},
   title={Schreier graphs of actions of Thompson's group $F$ on the unit
   interval and on the Cantor set},
   journal={Geom. Dedicata},
   volume={175},
   date={2015},
   pages={355--372},}

\bib{shen}{article}{
   author={Shen, J.},
   title={Maximal injective subalgebras of tensor products of free group
   factors},
   journal={J. Funct. Anal.},
   volume={240},
   date={2006},
   number={2},
   pages={334--348},}

\bib{masa_book}{book}{
   author={Sinclair, A.},
   author={Smith, R.},
   title={Finite von Neumann algebras and masas},
   series={London Mathematical Society Lecture Note Series},
   volume={351},
   publisher={Cambridge University Press, Cambridge},
   date={2008},
   pages={x+400},}

\bib{sut}{article}{
   author={Sutherland, C.},
   title={Cohomology and extensions of von Neumann algebras. I, II},
   journal={Publ. Res. Inst. Math. Sci.},
   volume={16},
   date={1980},
   number={1},
   pages={105--133, 135--174},}

\bib{yuhei}{article}{
   author={Suzuki, Y.},
   title={Minimal ambient nuclear ${\rm C}^*$-algebras},
   journal={Adv. Math.},
   volume={304},
   date={2017},
   pages={421--433},}

\bib{yuhei_19}{article}{
author={Suzuki, Y.},
title={Non-amenable tight squeezes by Kirchberg algebras},
status={arXiv: 1908.02971},
}


\bib{voi}{article}{
   author={Voiculescu, D.},
   title={Circular and semicircular systems and free product factors},
   conference={
      title={Operator algebras, unitary representations, enveloping
      algebras, and invariant theory},
      address={Paris},
      date={1989},
   },
   book={
      series={Progr. Math.},
      volume={92},
      publisher={Birkh\"{a}user Boston, Boston, MA},
   },
   date={1990},
   pages={45--60},}

\end{biblist}
\end{bibdiv}

\end{document}